\numberwithin{equation}{section}
\newcommand{\abs}[1]{\lvert#1\rvert}
\newcommand{\conj}[1]{\overline{#1}}
\DeclareMathOperator{\tf}{tf}
\DeclareMathOperator{\Ad}{Ad}
\DeclareMathOperator{\ad}{ad}
\DeclareMathOperator{\gr}{gr}
\DeclareMathOperator{\im}{im}
\DeclareMathOperator{\End}{End}
\DeclareMathOperator{\Fl}{Fl}
\DeclareMathOperator{\Ric}{Ric}
\DeclareMathOperator{\id}{id}
\DeclareMathOperator{\tr}{tr}
\DeclareMathOperator{\rank}{rank}
\let\Re\relax\DeclareMathOperator{\Re}{Re}
\newcommand{\sym}{\mathrm{sym}}
\renewcommand{\skew}{\mathrm{skew}}
\newcommand{\proj}{\mathrm{proj}}
\newcommand{\SU}{\mathit{SU}}
\newcommand{\PSU}{\mathit{PSU}}
\newcommand{\GL}{\mathit{GL}}
\newcommand{\CU}{\mathit{CU}}
\newcommand{\intprod}{\mathbin{\raisebox{1pt}{\scalebox{1.3}{$\lrcorner$}}}}
\newcommand{\assocbracket}[1]{\llbracket#1\rrbracket}
\newcommand{\transpose}[1]{\fourIdx{t}{}{}{}{#1}}
\newcommand{\weylcomp}[1]{\mathsf{#1}}
\newtheorem{thm}{Theorem}[section]
\newtheorem{prop}[thm]{Proposition}
\newtheorem{lem}[thm]{Lemma}
\theoremstyle{definition}
\newtheorem{dfn}[thm]{Definition}
\theoremstyle{remark}
\newtheorem{rem}[thm]{Remark}
\newtheorem{notation}[thm]{Notation}
\title[]{The CR Killing operator and\\ Bernstein--Gelfand--Gelfand construction in CR geometry}
\author{Yoshihiko Matsumoto}
\address{Department of Mathematics, Graduate School of Science, Osaka University, Toyonaka, Osaka 560-0043, Japan}
\email{matsumoto@math.sci.osaka-u.ac.jp}
\subjclass[2020]{Primary 32V05; Secondary 53A40, 53B15, 53D10, 58H15.}
\begin{document}

\maketitle

\begin{abstract}
	We elaborate the tractor calculus for compatible almost CR structures
	(also known as strictly pseudoconvex partially integrable almost CR structures)
	on contact manifolds,
	and as an application, express the first BGG invariant differential operator $D_0$ explicitly in some cases,
	i.e., for some tractor connections.
	An interesting outcome is the fact that
	the ``modified'' adjoint tractor connection $\tilde{\nabla}$
	governing infinitesimal deformations of parabolic geometries
	generates what we call the CR Killing operator as its first BGG operator,
	and actually, it does not agree with the first BGG operator of the normal (unmodified) adjoint tractor connection.
	The relationship between the CR Killing operator and analysis of
	ACHE (asymptotically complex hyperbolic Einstein) metrics, or more specifically the CR obstruction tensor,
	is also discussed.
\end{abstract}

\section{Introduction}

Theory of parabolic geometries,
as partly summarized in the standard reference of \v{C}ap--Slov\'ak \cite{Cap-Slovak-09},
has proven to be a useful tool in studying
certain kinds of differential-geometric structures.
The fundamental idea of the theory is to single out one preferred Cartan connection
among the ones admitted by the underlying geometric structure
by introducing a normality condition described in the Lie-theoretic language,
thereby constructing preferable linear connections of naturally defined vector bundles (called tractor bundles),
which fit together well with Lie algebra homology/cohomology theory.
In this article, we describe this approach in detail in the case of CR geometry,
and in particular, how a specific linear differential operator $D$, which we suggest calling the CR Killing operator,
can be put in its context.

As pointed out by \v{C}ap--Schichl \cite{Cap-Schichl-00} and perhaps also intended by earlier authors,
the scope of the approach toward CR geometry from the theory of parabolic geometries
is not limited to CR structures in the classical sense; the formal integrability condition can be relaxed.
Stated in a more precise language, the category of normal regular parabolic geometries of type $(G,P)$,
where $G=\PSU(n+1,1)$ and $P$ is its parabolic subgroup that consists of all the elements preserving
some fixed null complex line in $\mathbb{C}^{n+1,1}$, is equivalent to
the category of so-called strictly pseudoconvex partially integrable almost CR structures on contact manifolds,
or compatible almost CR structures by the term coined in \cite{Matsumoto-21}.
The subtleties about the CR Killing operator that we are going to discuss
become visible only by taking those non-integrable CR structures into consideration.

We define the \emph{CR Killing operator}
\begin{equation}
	\label{eq:CR-Killing-operator-domain-and-target}
	D\colon \Re\mathcal{E}(1,1)\to
	\Re(\mathcal{E}_{(\alpha\beta)}(1,1)\oplus\mathcal{E}_{(\conj{\alpha}\conj{\beta})}(1,1))
\end{equation}
on a contact manifold equipped with a compatible almost CR structure by
\begin{equation}
	\label{eq:CR-Killing-operator}
	Df=
	(i\nabla_{(\alpha}\nabla_{\beta)}f-A_{\alpha\beta}f-iN_{(\alpha\beta)\gamma}\nabla^\gamma f,
	-i\nabla_{({\conj{\alpha}}}\nabla_{{\conj{\beta}})}f-A_{\conj{\alpha}\conj{\beta}}f
	+iN_{(\conj{\alpha}\conj{\beta})\conj{\gamma}}\nabla^{\conj{\gamma}}f)
\end{equation}
in terms of any Tanaka--Webster connection,
where the necessary notation to interpret \eqref{eq:CR-Killing-operator-domain-and-target} and
\eqref{eq:CR-Killing-operator} is recalled in the next section.
One can verify that the operator given by \eqref{eq:CR-Killing-operator} does not
depend on a particular choice of the Tanaka--Webster connection (i.e., a choice of a contact form),
or in other words, that it is determined purely by the compatible almost CR structure.
In fact, as we will see in Section \ref{sec:CR-Killing-operator},
this operator has a CR-geometric meaning: it describes trivial infinitesimal deformations
of compatible almost CR structures induced by contact Hamiltonian vector fields.

As already mentioned, one major purpose of this article is to illustrate that
the operator \eqref{eq:CR-Killing-operator} also pops up from the theory of parabolic geometries.
It is actually what is called the first BGG (Bernstein--Gelfand--Gelfand) operator associated with
the ``modified'' adjoint tractor connection introduced by \v{C}ap in \cite{Cap-08}.

\begin{thm}
	The first BGG operator $D_0^{\tilde{\nabla}}$ of the modified adjoint tractor connection $\tilde{\nabla}$
	equals the CR Killing operator $D$.
\end{thm}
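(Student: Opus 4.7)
The plan is to unpack the definition of $D_0^{\tilde{\nabla}}$ in the explicit tractor-slot formalism developed in the earlier sections of the paper, and match the resulting expression term-by-term against \eqref{eq:CR-Killing-operator}. Under the $|2|$-grading of $\mathfrak{g}=\mathfrak{su}(n+1,1)$, the lowest homogeneous piece $\mathfrak{g}_{-2}$ is one-dimensional real, so $H_0(\mathfrak{p}_+,\mathcal{A})$ is identified with $\Re\mathcal{E}(1,1)$; and the symmetric $(2,0)\oplus(0,2)$ component of $H_1(\mathfrak{p}_+,\mathcal{A})$ is exactly the target bundle appearing in \eqref{eq:CR-Killing-operator-domain-and-target}. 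Thus both sides of the claimed identity are maps with the same domain and target, and one may compare them directly.

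The core step is to construct the BGG splitting operator $L\colon\Re\mathcal{E}(1,1)\to\Gamma(\mathcal{A})$ associated with $\tilde{\nabla}$, characterized by the Kostant normalization $\partial^{\ast}\bigl(\tilde{\nabla}L(f)\bigr)=0$. Using the tractor decomposition of $\mathcal{A}$ relative to a choice of contact form, one builds $L(f)$ inductively: $f$ sits in the $\mathfrak{g}_{-2}$-slot, the normalization then forces the $\mathfrak{g}_{-1}$-slots to be proportional to $\nabla_{\alpha}f$ and $\nabla_{\conj{\alpha}}f$, the $\mathfrak{g}_{0}$-slot is determined by a second-derivative expression plus a curvature correction, and the positive-degree slots are determined similarly. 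Applying $\tilde{\nabla}$ to $L(f)$ and projecting to $H_1$ then gives $D_0^{\tilde{\nabla}}f$. The leading term $i\nabla_{(\alpha}\nabla_{\beta)}f$ comes from the interaction of the derivative in $\tilde{\nabla}$ with the $\mathfrak{g}_{-1}$-slot of $L(f)$; the term $-A_{\alpha\beta}f$ comes from the pseudohermitian-torsion component of the adjoint tractor connection acting on the bottom slot; and the term $-iN_{(\alpha\beta)\gamma}\nabla^{\gamma}f$ arises entirely from the modification $\tilde{\nabla}-\nabla^{\mathcal{A}}$, which, by \v{C}ap's construction in \cite{Cap-08}, is a curvature-valued one-form involving the harmonic Nijenhuis component, and whose contraction with the $\mathfrak{g}_{-1}$-slot of $L(f)$ produces precisely this first-order term.

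The main obstacle is combinatorial bookkeeping rather than conceptual difficulty: one must carefully track the off-diagonal action of $\tilde{\nabla}$ across the five slots of $\mathcal{A}$, verify that all contributions to $\tilde{\nabla}L(f)$ lying outside the symmetric $(2,0)\oplus(0,2)$ component of $H_1$ either cancel or are killed by the harmonic projection, and pin down all signs and factors of $i$ coming from the chosen identification of $\mathcal{A}$ with the $\mathfrak{su}(n+1,1)$-associated bundle. Equally important is isolating where the modification term $\tilde{\nabla}-\nabla^{\mathcal{A}}$ contributes, so that the Nijenhuis term $-iN_{(\alpha\beta)\gamma}\nabla^{\gamma}f$ can be attributed cleanly to that modification; this confirms the parenthetical assertion in the introduction that the first BGG operator of the \emph{unmodified} adjoint tractor connection would produce a genuinely different operator whenever $N\ne 0$.
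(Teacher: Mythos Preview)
Your proposal is correct and follows essentially the same route as the paper: build the BGG splitting $L$ in the explicit slot decomposition of $\mathcal{A}M$ relative to an exact Weyl structure, apply $\tilde{\nabla}=\nabla+\iota_{\Pi(\cdot)}\kappa$, and project to $H_1(\mathfrak{p}_+,\mathfrak{g})$, attributing the second-order term to differentiating the $\mathfrak{g}_{-1}$-slot, the $A_{\alpha\beta}$ term to the Rho-tensor component $\weylcomp{Z}_{\beta\sigma}$ acting on the bottom slot, and the first-order Nijenhuis term to the curvature modification. The only detail you should not overlook in the bookkeeping is that the modification also contributes a zeroth-order $(\nabla^*N)^{\sym}_{\alpha\beta}$ piece (via the homogeneity-$2$ part of $\kappa$, not just its harmonic Nijenhuis part) which exactly cancels the corresponding term already present in $D_0^{\nabla}$; this cancellation is what makes $D_0^{\tilde{\nabla}}$ agree with \eqref{eq:CR-Killing-operator} while $D_0^{\nabla}$ does not.
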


Actually, it is also very important to note that, for general non-integrable compatible almost CR structures,
the ``unmodified'' normal adjoint tractor connection $\nabla$ does not play the same role;
using the modified connection $\tilde{\nabla}$ is really necessary.
The author believes that this observation gives some insight toward
the idea of ``holographic'' reconstruction of normal parabolic geometries
associated with compatible almost CR structures
on the conformal infinity of ACHE (asymptotically complex hyperbolic Einstein) spaces.

In order to compute the first BGG operators $D_0^\nabla$ and $D_0^{\tilde{\nabla}}$
and to show that the latter agrees with $D$,
we derive the formulae of $\nabla$ and $\tilde{\nabla}$ relative to
an arbitrarily chosen exact Weyl structure (which is equivalent to a contact form;
see Section \ref{subsec:exact-Weyl-structures-and-normal-Weyl-form}).
The derivation is given starting mostly from scratch.
The necessary general background is summarized in Section \ref{sec:general-theory-on-parabolic-geometry},
which contains an account of the correspondence of normal regular parabolic geometries
and underlying infinitesimal flag structures
and a sketch of the theory of tractor connections and BGG operators.
Section \ref{sec:Lie-algebra-homology-for-CR-geometry} is devoted to Lie algebra homology computations
that we need to specify the normality condition of the CR Cartan connection
and also to invoke the BGG construction later.
In Section \ref{sec:construction-of-normal-tractor-connections},
we first recall the general notion of Weyl structures and Weyl forms
and how they are used to describe tractor connections.
They are followed by a description of frame bundles associated with CR geometry,
and in the latter part of this section, the determination of CR normal Weyl forms is given.
Finally, in Section \ref{sec:computation-of-CR-BGG-operators}, we obtain the promised formulae
of the adjoint tractor connections and their associated first BGG operators.
Here we also discuss the case of the standard tractor connection;
for this purpose, in Section \ref{sec:construction-of-normal-tractor-connections}, we also consider
the frame bundle corresponding to the group $G^\sharp=\SU(n+1,1)$,
which is an $(n+2)$-sheeted covering of $G=\PSU(n+1,1)$.

In the literature, CR tractor bundles \emph{over integrable CR structures} have been discussed in several ways.
The most explicit, hands-on approach is taken in a work of Gover--Graham \cite{Gover-Graham-05},
which gives the formula of the standard (co)tractor connection relative to exact Weyl structures
and actually \emph{uses it as the definition} of the connection, without referencing to
normal Cartan connections or normal Weyl forms.
Another description of standard tractors using
the ambient metric construction of Fefferman \cite{Fefferman-76} is sketched by \v{C}ap \cite{Cap-02};
an analogous theory in the case of conformal geometry is detailed in a work of \v{C}ap--Gover \cite{Cap-Gover-03}.
Yet another, closely related approach via Fefferman's conformal circle bundle over CR manifolds
(\cite{Fefferman-76}, see also Lee \cite{Lee-86})
is also mentioned in \cite{Cap-02} and extensively used by \v{C}ap--Gover \cite{Cap-Gover-08}.
Herzlich \cite{Herzlich-09} gives a realization of the standard tractor bundle
as the bundle of 1-jets of CR-holomorphic sections of a certain complex line bundle,
which resembles the work of Bailey--Eastwood--Gover \cite{Bailey-Eastwood-Gover-94} in conformal geometry.

However, tractor bundles over non-integrable compatible almost CR structures have not been investigated well,
which are, as our observation reveals, also worthy of attention.
Note also that, in the terminology of the general theory of parabolic geometries,
integrable CR structures comprise the subclass of torsion-free geometries
of the class of $\abs{2}$-graded geometries corresponding to compatible almost CR structures.
The investigation given in this paper might serve as important working example in the general theory,
especially regarding the role played by the torsion of parabolic geometries.

In addition to this, we also believe that this article fixes a lack of detailed references
in the integrable case
on how the CR standard tractor connection of Gover--Graham \cite{Gover-Graham-05}
can be reconstructed based on the general theory of parabolic geometries.

To conclude the introduction, we briefly discuss possible relationships between CR tractor bundles and
other constructions for compatible almost CR structures.
For integrable CR structures, the standard tractor bundle can be recovered by
the ambient metric construction as we already mentioned;
by contrast, there is no plausible notion of the ambient metric for general compatible almost CR structures.
But here we want to recall the related notion of complete K\"ahler-Einstein spaces
in the integrable case, whose boundaries at infinity (``conformal infinity'') carry integrable CR structures and
the total spaces of whose canonical bundle, with zero section removed, carry the ambient metrics
(see \cite{Fefferman-76} or an article of Fefferman--Graham \cite{Fefferman-Graham-85}).
These complete K\"ahler-Einstein spaces generalize to ACHE spaces of Biquard \cite{Biquard-00},
whose conformal infinities are equipped with compatible almost CR structures.
Consequently, a natural hope may be that ACHE spaces can be used to recover the standard tractor bundles
associated with compatible almost CR structures.

Our observation in this paper sheds a doubt over this na\"ive idea.
ACHE spaces have some indirect relationship with the CR Killing operator via
the CR obstruction tensor introduced by the author \cite{Matsumoto-14}
as we recall in Section \ref{sec:CR-Killing-operator},
and so there should be some nice way to reconstruct the CR Killing operator using ACHE metrics.
But as we will see, the CR Killing operator is related to the modified adjoint tractor connection $\tilde{\nabla}$,
which has little to do with the standard tractor bundle.
Therefore, a more natural expectation might be as this:
there is a way to reconstruct the adjoint tractor bundle $\mathcal{A}M$ of
compatible almost CR structures $J$ from ACHE spaces,
and when $J$ is integrable, the realization of $\mathcal{A}M$ as a subbundle of $\End(\mathcal{V})$
via the standard tractor bundle $\mathcal{V}$
(see Section \ref{subsec:BGG-opertor-for-normal-adjoint-tractor-connection})
can be seen as its specialization.
The author wants to come back to such a holographic reconstruction of $\mathcal{A}M$ using ACHE spaces
in the future.
In fact, another article \cite{Matsumoto-21} by the author is also an attempt toward this idea.

\section{Preliminaries}
\label{sec:prelim}

We summarize basic matters in pseudohermitian geometry of compatible almost CR structures
by largely following \cite{Matsumoto-14}*{Section 3}, supplementing them with some new formulae
and minor notational modifications.

\subsection{Basic definitions}
\label{subsec:basics}

Throughout this article, we assume that we are given a contact manifold $(M,H)$ of dimension $2n+1$
that is cooriented,
i.e., associated with a fixed orientation of the line bundle $H^\perp$ of 1-forms annihilating $H$.
By a \emph{contact form} we mean a nowhere-vanishing section of the bundle $H^\perp$ that is positive
with respect to the given orientation;
hence any two contact forms are related as $\Hat{\theta}=e^u\theta$, where $u\in C^\infty(M)$.
Any choice of $\theta$ determines the direct sum decomposition
\begin{equation}
	\label{eq:Reeb-splitting}
	TM=H\oplus\mathbb{R}T,
\end{equation}
where $T$, the \emph{Reeb vector field}, is characterized by $\theta(T)=1$ and $T\intprod d\theta=0$.

Let $J$ be an almost CR structure on a contact manifold $(M,H)$,
by which we mean that $J$ is a complex structure of $H$.
The \emph{compatibility} of the almost CR structure $J$, which we always assume, means that
\begin{equation}
	\label{eq:Levi-form}
	h(X,Y)=d\theta(X,JY),\qquad X,\,Y\in H
\end{equation}
is a positive-definite Hermitian form on $H$, called the
\emph{Levi form}\footnote{We assume for brevity that the Levi form $h$ is positive definite, but
everything in this article can be immediately generalized to the case in which $h$ has arbitrary
(nondegenerate) signature as well.}.
The compatibility condition is irrelevant to the choice of $\theta$ because
$\Hat{\theta}=e^u\theta$ implies $d\Hat{\theta}(X,JY)=e^ud\theta(X,JY)$.
We sometimes call $(H,J)$, instead of $J$, a compatible almost CR structure.

\begin{rem}
	It is well known that the formal integrability condition of $J$
	implies that \eqref{eq:Levi-form} is a Hermitian form.
	Then the nondegeneracy of \eqref{eq:Levi-form} is automatic because of the contact condition,
	and the positive-definiteness is referred to as \emph{strict pseudoconvexity}.
	Therefore, the notion of compatible almost CR structures is a generalization of
	that of strictly pseudoconvex (integrable) CR structures.
	The point of this Section \ref{sec:prelim} is that
	pseudohermitian geometry does not change much in the broader class.
	Our ``compatible almost CR structures'' have been more often called
	``strictly pseudoconvex partially integrable almost CR structures''
	(e.g., in \v{C}ap--Schichl \cite{Cap-Schichl-00})
	in view of the fact that
	\eqref{eq:Levi-form} being a Hermitian form is equivalent to a weaker formal integrability.
	We also note that a compatible almost CR structure $(H,J)$ together with a fixed contact form $\theta$
	is equivalent to a \emph{contact metric structure} in the sense of Blair \cite{Blair-76}.
\end{rem}

A compatible almost CR structure $(H,J)$ naturally comes with the decomposition
\begin{equation}
	\label{eq:CR-splitting}
	\mathbb{C}H=H^{1,0}\oplus H^{0,1}
\end{equation}
of the complexification of $H$ into the $(\pm i)$-eigenspaces of $J$
(the compatibility of $J$ is actually irrelevant here).
In actual computations, we often need to take a local frame $\set{Z_\alpha}$ of $H^{1,0}$.
By doing so, at the same time, we obtain a local frame $\set{Z_{\conj{\alpha}}}$ of $H^{0,1}$,
which is the complex conjugate of $\set{Z_\alpha}$.

When $\theta$ is moreover fixed,
we have the \emph{admissible coframe} $\set{\theta^\alpha}$ associated with any fixed local frame $\set{Z_\alpha}$,
which is the set of complex 1-forms satisfying
$\theta^\alpha(Z_\beta)=\tensor{\delta}{_\beta^\alpha}$ and $\theta^\alpha(Z_{\conj{\beta}})=\theta^\alpha(T)=0$.
Its complex conjugate is denoted by $\set{\theta^{\conj{\alpha}}}$.
Then \eqref{eq:Levi-form} and $T\intprod d\theta=0$ imply that
\begin{equation}
	\label{eq:d-theta-and-Levi-form}
	d\theta=ih_{\alpha{\conj{\beta}}}\theta^\alpha\wedge\theta^{\conj{\beta}},
\end{equation}
where $h_{\alpha{\conj{\beta}}}=h(Z_\alpha,Z_{\conj{\beta}})$ is pointwisely a positive-definite Hermitian matrix.

A section of $H^{1,0}$ can be expressed locally by symbols with an index upstairs (e.g., $v^\alpha$)
in terms of any fixed local frame $\set{Z_\alpha}$.
For this reason, the bundle $H^{1,0}$ itself is denoted by $\mathcal{E}^\alpha$ in the sequel, where
$\alpha$ is an ``abstract index.''
In general, any tensor bundle (i.e., a tensor product of $H^{1,0}$, $H^{0,1}$, $(H^{1,0})^*$, $(H^{1,0})^*$
and its geometrically natural subbundle)
will be denoted by $\mathcal{E}$ associated with the same indices used for
expressing their sections locally, and if applicable, some additional symbols representing a particular type of
subbundles.
For example, the Levi form $h_{\alpha\conj{\beta}}$ is a section of
$\mathcal{E}_{\alpha\conj{\beta}}=(H^{1,0})^*\otimes(H^{1,0})^*$
(actually $h_{\alpha\conj{\beta}}$ has the Hermitian symmetry and so is a section of
the Hermitian symmetric part of $\mathcal{E}_{\alpha\conj{\beta}}$, which has no commonly accepted notation).
The Tanaka--Webster torsion tensor $A_{\alpha\beta}$ (defined in Section \ref{subsec:Tanaka-Webster-connection})
is a section of $\mathcal{E}_{\alpha\beta}=(H^{1,0})^*\otimes(H^{1,0})^*$,
and since $A_{\alpha\beta}=A_{\beta\alpha}$, it is further said that it is a section of
$\mathcal{E}_{(\alpha\beta)}$, where $(\cdots)$ denotes the symmetrization with respect to the enclosed indices.
Likewise, $[\cdots]$ denotes the skew-symmetrization.
Note also that the symmetry property $A_{\alpha\beta}=A_{\beta\alpha}$ can also be expressed as
$A_{\alpha\beta}=A_{(\alpha\beta)}$.

Furthermore, we observe the well-accepted custom in the field
that the space of (possibly local) sections, or the sheaf of germs of
local sections, of a tensor bundle is denoted by the same symbol assigned to the tensor bundle itself.
The distinction will be clearly made by context.

Next, we introduce the \emph{density bundles} $\mathcal{E}(w,w')$,
where $w$ and $w'$ are integers\footnote{The definition of $\mathcal{E}(w,w')$ can immediately be
generalized to $w$, $w'\in\mathbb{C}$ satisfying $w-w'\in\mathbb{Z}$, which we do not need.
See Gover--Graham \cite{Gover-Graham-05}*{p.~4}.}.
The \emph{canonical bundle} $\mathcal{K}$ over $(M,H,J)$ is the complex line bundle $\bigwedge^{n+1}(H^{0,1})^\perp$;
$\mathcal{K}$ is generated by $\theta\wedge\theta^1\wedge\dots\wedge\theta^n$
for any choice of $\theta$ and an admissible coframe $\set{\theta^\alpha}$.
Working locally if necessary, we fix an $(n+2)$-nd root of $\mathcal{K}$
and write its dual $\mathcal{E}(1,0)$.
Then we define
\begin{equation}
	\mathcal{E}(w,w')=(\mathcal{E}(1,0))^{\otimes w}\otimes(\conj{\mathcal{E}(1,0)})^{\otimes w'},\qquad
	w,\,w'\in\mathbb{Z}
\end{equation}
and its sections are called \emph{$(w,w')$-densities}.
Moreover, we write
$\tensor{\mathcal{E}}{_\alpha_{\conj{\beta}}^\sigma^{\conj{\tau}}}(w,w')
=\tensor{\mathcal{E}}{_\alpha_{\conj{\beta}}^\sigma^{\conj{\tau}}}\otimes\mathcal{E}(w,w')$, etc.,
and sections of such bundles are referred to as \emph{weighted tensors}.

Particularly important density bundles are those of the form $\mathcal{E}(w,w)$, for they are
independent of the choice of $\mathcal{E}(1,0)$, and as a consequence, globally defined.
In other words, although tensors weighted by general $\mathcal{E}(w,w')$ should be considered as an object
defined on $(M,H,J,\mathcal{E}(1,0))$,
those weighted by $\mathcal{E}(w,w)$ can be understood as an object on $(M,H,J)$.
Some readers may feel comfortable by regarding tensors weighted by $\mathcal{E}(w,w')$, $w\not=w'$, as
intermediate objects and those weighted by $\mathcal{E}(w,w)$ as ``actual'' objects.

Any choice of a contact form $\theta$ determines a trivialization of $\mathcal{E}(w,w)$ uniquely.
To see this,
let $\zeta=\theta\wedge\theta^1\wedge\dots\wedge\theta^n$ be a section of $\mathcal{K}=\mathcal{E}(-n-2,0)$,
where $\set{\theta^\alpha}$ is some unitary admissible coframe with respect to the Levi form $h$,
possibly only locally defined.
Then $\zeta$ is pointwisely well-defined up to phase
(and it is volume-normalized with respect to $\theta$ in the sense of Lee \cite{Lee-86}*{Section 3}).
If $\xi$ is a section of $\mathcal{E}(1,0)$ satisfying $\xi^{-n-2}=\zeta$, then
$\abs{\xi}^{2w}$ is a globally defined section of $\mathcal{E}(w,w)$ that depends only on $\theta$.
The mentioned trivialization of $\mathcal{E}(w,w)$ can be given by $\abs{\xi}^{2w}$.

\begin{rem}
	For any change $\Hat{\theta}=e^u\theta$ of contact forms,
	the $(w,w)$-density $\abs{\xi}^{2w}$ scales as $\abs{\Hat{\xi}}^{2w}=e^{-wu}\abs{\xi}^{2w}$.
	Therefore, if $f$ and $\Hat{f}$ are the trivializations of the same $(w,w)$-density
	with respect to $\theta$ and $\Hat{\theta}$, respectively, then $\Hat{f}=e^{wu}f$.
	An intuitive understanding of $\mathcal{E}(w,w)$ is that it is the bundle of scalar-valued quantities
	that rescales like $\Hat{f}=e^{wu}f$ for the change $\Hat{\theta}=e^u\theta$.
\end{rem}

We have $\abs{\Hat{\xi}}^2=e^{-u}\abs{\xi}^2$ in particular.
Consequently, the \emph{weighted contact form}
\begin{equation}
	\bm{\theta}=\theta\otimes\abs{\xi}^2
\end{equation}
is independent of $\theta$,
which also implies that there is a canonical identification $H^\perp\cong\mathcal{E}(-1,-1)$.
The \emph{weighted Levi form}
\begin{equation}
	\bm{h}_{\alpha{\conj{\beta}}}=h_{\alpha{\conj{\beta}}}\otimes\abs{\xi}^2
	\in\mathcal{E}_{\alpha{\conj{\beta}}}(1,1)
\end{equation}
and its inverse
\begin{equation}
	\bm{h}^{\alpha{\conj{\beta}}}=h^{\alpha{\conj{\beta}}}\otimes\abs{\xi}^{-2}
	\in\mathcal{E}^{\alpha{\conj{\beta}}}(-1,-1)
\end{equation}
are also independent of $\theta$.

\begin{notation}
	Whenever some fixed $\theta$ is taken, possibly implicitly,
	for any tensor weighted by $\mathcal{E}(w,w)$, we use the same symbol to express itself and
	its trivialization with respect to $\theta$ (in order to reduce the use of boldface letters).
	The distinction is made by context.
\end{notation}

Based on the above notational convention,
we could have used the symbol $h_{\alpha{\conj{\beta}}}$ for the weighted Levi form
instead of $\bm{h}_{\alpha{\conj{\beta}}}$
(and keep the unweighted one denoted by $h_{\alpha{\conj{\beta}}}$ as well),
and even could have used $\theta$ for the weighted contact form.
We try to take this way from now on as much as possible.

\subsection{The Nijenhuis tensor}
\label{subsec:Nijenhuis-tensor}

The non-integrability of a compatible almost CR structure $J$ is measured by
the \emph{Nijenhuis tensor} $N$, which is a section of
$\tensor{\mathcal{E}}{^{\conj{\gamma}}_[_\alpha_\beta_]}$,
defined by\footnote{We use a different convention
on the sign and the order of the indices compared to \cite{Matsumoto-14}*{p.~2137}.}
\begin{equation}
	[Z_\alpha,Z_\beta]=-\tensor{N}{^{\conj{\gamma}}_\alpha_\beta}Z_{\conj{\gamma}}\mod H^{1,0},
\end{equation}
or equivalently,
\begin{equation}
	\label{eq:definition-of-Nijenhuis-tensor}
	d\theta^{\conj{\gamma}}=\frac{1}{2}\tensor{N}{^{\conj{\gamma}}_\alpha_\beta}\theta^\alpha\wedge\theta^\beta
	\mod\theta^{\conj{\alpha}},\,\theta,\qquad
	\tensor{N}{^{\conj{\gamma}}_(_\alpha_\beta_)}=0.
\end{equation}
Moreover, $N$ can be understood as a real tensor by setting
\begin{equation}
	\tensor{N}{^\gamma_{\conj{\alpha}}_{\conj{\beta}}}=\conj{\tensor{N}{^{\conj{\gamma}}_\alpha_\beta}},
\end{equation}
the practice we repeatedly apply to various tensors in the sequel, sometimes without mentioning.

The index upstairs of $\tensor{N}{^{\conj{\gamma}}_\alpha_\beta}$ can be lowered
using the weighted Levi form as
\begin{equation}
	\label{eq:lowered-Nijenhuis-tensor}
	N_{\gamma\alpha\beta}=h_{\gamma{\conj{\sigma}}}\tensor{N}{^{\conj{\sigma}}_\alpha_\beta}.
\end{equation}
Therefore, $N_{\gamma\alpha\beta}$ is a section of $\mathcal{E}_{\gamma[\alpha\beta]}(1,1)$.
Given a fixed contact form $\theta$,
$h_{\gamma{\conj{\sigma}}}$ on the right-hand side can also be understood as the unweighted Levi form,
$N_{\gamma\alpha\beta}$ on the left as the trivialization of its weighted version,
and \eqref{eq:lowered-Nijenhuis-tensor}
is justified as an equality between sections of $\mathcal{E}_{\gamma[\alpha\beta]}$ as well.

It can be shown that the index-lowered Nijenhuis tensor $N_{\gamma\alpha\beta}$ has a symmetry,
in addition to the obvious $N_{\gamma(\alpha\beta)}=0$, that
\begin{equation}
	\label{eq:Nijenhuis-symmetry}
	N_{\alpha\beta\gamma}+N_{\beta\gamma\alpha}+N_{\gamma\alpha\beta}=0,
\end{equation}
by differentiating \eqref{eq:d-theta-and-Levi-form} and using \eqref{eq:definition-of-Nijenhuis-tensor}.

For convenience, we write
\begin{equation}
	N^\sym_{\alpha\beta\gamma}=N_{(\alpha\beta)\gamma},\qquad
	N^\skew_{\alpha\beta\gamma}=N_{[\alpha\beta]\gamma}.
\end{equation}
Note that \eqref{eq:Nijenhuis-symmetry} implies
\begin{equation}
	N^\skew_{\alpha\beta\gamma}
	=\frac{1}{2}(N_{\alpha\beta\gamma}-N_{\beta\alpha\gamma})
	=-\frac{1}{2}N_{\gamma\alpha\beta}.
\end{equation}
Moreover, if we define $\abs{N}^2=N_{\alpha\beta\gamma}N^{\alpha\beta\gamma}$
and $\abs{N^\sym}^2$, $\abs{N^\skew}^2$ in a similar manner, then we have
\begin{equation}
	\label{eq:decomposition-of-squared-norm-of-N}
	\abs{N^\skew}^2=\frac{1}{4}\abs{N}^2,\qquad
	\abs{N^\sym}^2=\abs{N}^2-\abs{N^\skew}^2=\frac{3}{4}\abs{N}^2.
\end{equation}

Furthermore, when $\theta$ is fixed, we write
\begin{equation}
	(\nabla^*N)^\sym_{\alpha\beta}=-\nabla^\gamma N_{(\alpha\beta)\gamma},\qquad
	(\nabla^*N)^\skew_{\alpha\beta}=-\nabla^\gamma N_{[\alpha\beta]\gamma}
\end{equation}
using the Tanaka--Webster connection $\nabla$ defined in the next subsection.

\subsection{The Tanaka--Webster connection}
\label{subsec:Tanaka-Webster-connection}

The \emph{Tanaka--Webster connection} of a manifold $M$ with
a compatible almost CR structure $(H,J)$ associated with a choice of a contact form $\theta$
is the connection of $TM$ characterized by the fact that it preserves the splitting \eqref{eq:Reeb-splitting},
makes $T$, $J$, $h$ parallel, and satisfies some torsion condition
(see \cite{Matsumoto-14}*{Proposition 3.1} for the detailed characterization).
Note that the connection also preserves the complex splitting \eqref{eq:CR-splitting} because $J$ is parallel.
Therefore, upon fixing a local frame $\set{Z_\alpha}$ of $H^{1,0}$,
the Tanaka--Webster connection $\nabla$ is described by the connection forms $\tensor{\omega}{_\alpha^\beta}$
and their complex conjugates $\tensor{\omega}{_{\conj{\alpha}}^{\conj{\beta}}}$.
Since $\nabla h=0$ (where $h$ is the unweighted Levi form), we have
\begin{equation}
	\label{eq:metric-compatibility-in-connection-form}
	\omega_{\alpha{\conj{\beta}}}+\omega_{{\conj{\beta}}\alpha}=dh_{\alpha{\conj{\beta}}}.
\end{equation}
The torsion condition for $\nabla$ implies that
\begin{equation}
	\label{eq:first-structure-equation}
	d\theta^\gamma
	=\theta^\alpha\wedge\tensor{\omega}{_\alpha^\gamma}
	+\tensor{A}{_{\conj{\alpha}}^\gamma}\theta\wedge\theta^{\conj{\alpha}}
	+\frac{1}{2}\tensor{N}{^\gamma_{\conj{\alpha}}_{\conj{\beta}}}\theta^{\conj{\alpha}}\wedge\theta^{\conj{\beta}}
\end{equation}
holds for some uniquely determined tensor
$\tensor{A}{_{\conj{\alpha}}^\gamma}\in\tensor{\mathcal{E}}{_{\conj{\alpha}}^\gamma}(-1,-1)$,
which is called the \emph{Tanaka--Webster torsion tensor}.
It is known that $A_{\alpha\beta}=A_{\beta\alpha}$.

The Tanaka--Webster connection induces a connection of the canonical bundle $\mathcal{K}$,
and hence that of $\mathcal{E}(w,w')$, all of which are denoted by $\nabla$.
Let $\zeta=\theta\wedge\theta^1\wedge\dots\wedge\theta^n$ be a section of $\mathcal{K}$,
where $\set{\theta^\alpha}$ is some unitary admissible coframe with respect to $\theta$.
Then we have $\nabla\zeta=-\tensor{\omega}{_\gamma^\gamma}\otimes\zeta$.
Consequently, if $\xi$ is a section of $\mathcal{E}(1,0)$ satisfying $\xi^{-n-2}=\zeta$, then
considering \eqref{eq:metric-compatibility-in-connection-form} we obtain
\begin{equation}
	\label{eq:TW-connection-on-densities}
	\nabla(\xi^w\otimes\smash{\conj{\xi}}^{w'})
	=\frac{w-w'}{n+2}\tensor{\omega}{_\gamma^\gamma}
	\otimes(\xi^w\otimes\smash{\conj{\xi}}^{w'}).
\end{equation}
In particular, $\abs{\xi}^{2w}=\xi^w\otimes\smash{\conj{\xi}}^w$ is a parallel section of $\mathcal{E}(w,w)$,
and hence the trivialization of weighted tensors using $\abs{\xi}^{2w}$ is compatible with
covariant differentiation.
As a result, the weighted contact form and the weighted Levi form are also parallel.

We express the curvature form
$\tensor{\Pi}{_\alpha^\beta}
=d\tensor{\omega}{_\alpha^\beta}-\tensor{\omega}{_\alpha^\gamma}\wedge\tensor{\omega}{_\gamma^\beta}$
as
\begin{equation}
	\label{eq:Tanaka-Webster-curvature-form}
	\tensor{\Pi}{_\alpha^\beta}
	=\tensor{R}{_\alpha^\beta_\sigma_{\conj{\tau}}}\theta^\sigma\wedge\theta^{\conj{\tau}}
	+\tensor{W}{_\alpha^\beta_\gamma}\theta^\gamma\wedge\theta
	+\tensor{W}{_\alpha^\beta_{\conj{\gamma}}}\theta^{\conj{\gamma}}\wedge\theta
	+\frac{1}{2}\tensor{V}{_\alpha^\beta_\sigma_\tau}\theta^\sigma\wedge\theta^\tau
	+\frac{1}{2}\tensor{V}{_\alpha^\beta_{\conj{\sigma}}_{\conj{\tau}}}\theta^{\conj{\sigma}}\wedge\theta^{\conj{\tau}},
\end{equation}
where $\tensor{V}{_\alpha^\beta_(_\sigma_\tau_)}=\tensor{V}{_\alpha^\beta_(_{\conj{\sigma}}_{\conj{\tau}}_)}=0$.
Among the components on the right-hand side,
$\tensor{R}{_\alpha^\beta_\sigma_{\conj{\tau}}}$ is called the \emph{Tanaka--Webster curvature tensor},
and it satisfies
\begin{equation}
	\label{eq:Tanaka-Webster-curvature-symmetry}
	\tensor{R}{_\alpha_{\conj{\beta}}_\sigma_{\conj{\tau}}}
	=\tensor{R}{_{\conj{\beta}}_\alpha_{\conj{\tau}}_\sigma},\qquad
	\tensor{R}{_\alpha^\beta_\sigma_{\conj{\tau}}}
	-\tensor{R}{_\sigma^\beta_\alpha_{\conj{\tau}}}
	=-\tensor{N}{^{\conj{\gamma}}_\alpha_\sigma}\tensor{N}{^\beta_{\conj{\tau}}_{\conj{\gamma}}}.
\end{equation}
The other components are given in terms of $N$ and $A$ as follows\footnote{Two vertical bars within
$(\dotsb)$ (resp.\ $[\dotsb]$) indicate that the indices between them are excluded from
the symmetrization (resp.\ the skew-symmetrization).}:
\begin{align}
	\tensor{W}{_\alpha^\beta_\gamma}
	&=\nabla^\beta A_{\alpha\gamma}+N_{\alpha\gamma\sigma}A^{\beta\sigma},
	&\tensor{W}{_\alpha^\beta_{\conj{\gamma}}}
	&=-\tensor{W}{^\beta_\alpha_{\conj{\gamma}}}
	=-\nabla_\alpha\tensor{A}{^\beta_{\conj{\gamma}}}
	-\tensor{N}{^\beta_{\conj{\gamma}}_{\conj{\sigma}}}\tensor{A}{_\alpha^{\conj{\sigma}}},\\
	\tensor{V}{_\alpha^\beta_\sigma_\tau}
	&=2i\tensor{\delta}{_[_\sigma_|^\beta}\tensor{A}{_\alpha_|_\tau_]}
	-\nabla^\beta N_{\alpha\sigma\tau},
	&\tensor{V}{_\alpha^\beta_{\conj{\sigma}}_{\conj{\tau}}}
	&=-\tensor{V}{^\beta_\alpha_{\conj{\sigma}}_{\conj{\tau}}}
	=2i\tensor{h}{_\alpha_[_{\conj{\sigma}}}\tensor{A}{^\beta_{\conj{\tau}}_]}
	+\nabla_\alpha\tensor{N}{^\beta_{\conj{\sigma}}_{\conj{\tau}}}.
\end{align}
Moreover, we define
\begin{equation}
	R_{\alpha{\conj{\beta}}}=\tensor{R}{_\gamma^\gamma_\alpha_{\conj{\beta}}},\qquad
	R'_{\alpha{\conj{\beta}}}=\tensor{R}{_\alpha_{\conj{\beta}}_\gamma^\gamma},\qquad
	R''_{\alpha{\conj{\beta}}}=\tensor{R}{_\alpha^\gamma_\gamma_{\conj{\beta}}}
\end{equation}
and
\begin{equation}
	R=\tensor{R}{_\gamma^\gamma},\qquad
	R'=\tensor{{R'}}{_\gamma^\gamma}(=R),\qquad
	R''=\tensor{{R''}}{_\gamma^\gamma}.
\end{equation}
Then it follows from \eqref{eq:Tanaka-Webster-curvature-symmetry} that
\begin{equation}
	\label{eq:Ricci-primed-identities}
	R'_{\alpha{\conj{\beta}}}
	=R_{\alpha{\conj{\beta}}}
	+2N_{(\alpha\lambda)\mu}\tensor{N}{^\mu^\lambda_{\conj{\beta}}},
	\qquad
	R''_{\alpha{\conj{\beta}}}
	=R_{\alpha{\conj{\beta}}}
	-N_{\lambda\mu\alpha}\tensor{N}{^\mu^\lambda_{\conj{\beta}}}
\end{equation}
and
\begin{equation}
	R''=R-N_{\lambda\mu\nu}N^{\mu\lambda\nu}
	=R-\abs{N^\sym}^2+\abs{N^\skew}^2=R-\frac{1}{2}\abs{N}^2
\end{equation}
by \eqref{eq:decomposition-of-squared-norm-of-N}.
Note also that $R_{\alpha{\conj{\beta}}}=R_{{\conj{\beta}}\alpha}$,
$R'_{\alpha{\conj{\beta}}}=R'_{{\conj{\beta}}\alpha}$ immediately follow from the definitions
and $R''_{\alpha{\conj{\beta}}}=R''_{{\conj{\beta}}\alpha}$ holds
because of the second equality in \eqref{eq:Ricci-primed-identities}.

As in Gover--Graham \cite{Gover-Graham-05}*{p.\ 6},
we define the \emph{CR Schouten tensor} $P_{\alpha{\conj{\beta}}}$ by
\begin{equation}
	P_{\alpha{\conj{\beta}}}
	=\frac{1}{n+2}\left(R_{\alpha{\conj{\beta}}}-\frac{1}{2(n+1)}Rh_{\alpha{\conj{\beta}}}\right),
\end{equation}
and $P'_{\alpha{\conj{\beta}}}$ (resp.\ $P''_{\alpha{\conj{\beta}}}$) will denote
the similarly defined quantity with $R_{\alpha{\conj{\beta}}}$ replaced with
$R'_{\alpha{\conj{\beta}}}$ (resp.\ $R''_{\alpha{\conj{\beta}}}$) and
$R$ replaced with $R'$ (resp.\ $R''$) (but note that $R'=R$).
Then \eqref{eq:Ricci-primed-identities} implies
\begin{align}
	P'_{\alpha{\conj{\beta}}}
	&=P_{\alpha{\conj{\beta}}}
	+\frac{2}{n+2}N_{(\alpha\lambda)\mu}\tensor{N}{^\mu^\lambda_{\conj{\beta}}},\\
	P''_{\alpha{\conj{\beta}}}
	&=P_{\alpha{\conj{\beta}}}
	-\frac{1}{n+2}N_{\lambda\mu\alpha}\tensor{N}{^\mu^\lambda_{\conj{\beta}}}
	+\frac{1}{4(n+1)(n+2)}\abs{N}^2h_{\alpha{\conj{\beta}}}.
\end{align}
We define $P=\tensor{P}{_\gamma^\gamma}$, $P'=\tensor{{P'}}{_\gamma^\gamma}$,
and $P''=\tensor{{P''}}{_\gamma^\gamma}$, which are related by
\begin{equation}
	\label{eq:relationship-of-various-P}
	P'=P,\qquad
	P''=P-\frac{1}{4(n+1)}\abs{N}^2.
\end{equation}

We will later need the following transformation law,
taken from \cite{Matsumoto-14}*{Proposition 3.6}, for changes of contact forms.

\begin{prop}
	\label{prop:TW-for-contact-form-change}
	Let $\theta$ and $\Hat{\theta}=e^u\theta$ be two contact forms, where $u\in C^\infty(M)$.
	Then, for any local frame $\set{Z_\alpha}$ and
	the dual admissible coframe $\set{\theta^\alpha}$ with respect to $\theta$,
	the Tanaka--Webster connection forms $\tensor{\Hat{\omega}}{_\alpha^\beta}$ for $\Hat{\theta}$
	with respect to $\set{Z_\alpha}$ are expressed in terms of those for $\theta$ by
	\begin{equation}
		\tensor{\Hat{\omega}}{_\alpha^\beta}
		=\tensor{\omega}{_\alpha^\beta}+(u_\alpha\theta^\beta-u^\beta\theta_\alpha)
		+\tensor{\delta}{_\alpha^\beta}u_\gamma\theta^\gamma
		+i(\tensor{u}{^\beta_\alpha}+u_\alpha u^\beta+\tensor{\delta}{_\alpha^\beta}u_\gamma u^\gamma)\theta,
	\end{equation}
	where the indices following $u$ denotes the Tanaka--Webster covariant differentiation with respect to $\theta$.
	Moreover,
	\begin{align}
		\Hat{A}_{\alpha\beta}
		&=A_{\alpha\beta}+iu_{(\alpha\beta)}-iu_\alpha u_\beta
		+iN_{(\alpha\beta)\gamma}u^\gamma,\\
		\Hat{P}_{\alpha{\conj{\beta}}}
		&=P_{\alpha{\conj{\beta}}}
		-\frac{1}{2}(u_{\alpha{\conj{\beta}}}+u_{{\conj{\beta}}\alpha})
		-\frac{1}{2}u_\gamma u^\gamma h_{\alpha{\conj{\beta}}},
	\end{align}
	where the unhatted (resp.\ hatted) quantities are associated with $\theta$ (resp.\ $\Hat{\theta}$).
\end{prop}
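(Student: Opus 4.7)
The plan is to exploit the characterization of the Tanaka--Webster connection by the first structure equation \eqref{eq:first-structure-equation} together with the metric compatibility \eqref{eq:metric-compatibility-in-connection-form}, using the fact that the Nijenhuis tensor is an invariant of $(H,J)$ so that $\Hat{N} = N$ under the rescaling. Writing both structure equations and subtracting produces a linear system whose solution determines $\tensor{\Hat{\omega}}{_\alpha^\beta} - \tensor{\omega}{_\alpha^\beta}$ and $\Hat{A}_{\alpha\beta}$ simultaneously.

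First I would compute the admissible coframe change. Solving $\Hat{\theta}(\Hat{T}) = 1$ and $\Hat{T} \intprod d\Hat{\theta} = 0$ using \eqref{eq:d-theta-and-Levi-form} gives $\Hat{T} = e^{-u}(T - iu^\gamma Z_\gamma + iu^{\conj{\gamma}} Z_{\conj{\gamma}})$, and hence $\Hat{\theta}^\alpha = \theta^\alpha + iu^\alpha \theta$. Differentiating this identity using the structure equation for $\theta$ and $d\Hat{\theta} = e^u du \wedge \theta + e^u d\theta$, and then inserting the result into \eqref{eq:first-structure-equation} written for $\Hat{\theta}$ with the unchanged Nijenhuis tensor, produces an equation for $\tensor{\Hat{\omega}}{_\alpha^\beta}$ modulo an ambiguity in the $\theta^\sigma \wedge \theta^\tau$ symmetric directions; this remaining ambiguity is eliminated by enforcing the metric compatibility \eqref{eq:metric-compatibility-in-connection-form} for $\Hat{\omega}$. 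Matching the coefficient of $\Hat{\theta} \wedge \Hat{\theta}^{\conj{\alpha}}$ on both sides then reads off $\Hat{A}_{\alpha\beta}$: the terms $iu_{(\alpha\beta)} - iu_\alpha u_\beta$ arise exactly as in the classical integrable transformation law, while the extra contribution $iN_{(\alpha\beta)\gamma}u^\gamma$ is produced because the Nijenhuis component $\frac{1}{2}\tensor{N}{^{\conj{\gamma}}_{\conj{\alpha}}_{\conj{\beta}}}\theta^{\conj{\alpha}}\wedge\theta^{\conj{\beta}}$ of $d\theta^{\conj{\gamma}}$ (cf.~\eqref{eq:definition-of-Nijenhuis-tensor}) feeds into the computation once one substitutes $\Hat{\theta}^\alpha = \theta^\alpha + iu^\alpha\theta$.

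For the Schouten tensor I would exterior-differentiate the already-determined expression for $\tensor{\Phi}{_\alpha^\beta} := \tensor{\Hat{\omega}}{_\alpha^\beta} - \tensor{\omega}{_\alpha^\beta}$ and use
\[
\tensor{\Hat{\Pi}}{_\alpha^\beta} - \tensor{\Pi}{_\alpha^\beta} = d\tensor{\Phi}{_\alpha^\beta} - \tensor{\omega}{_\alpha^\gamma}\wedge\tensor{\Phi}{_\gamma^\beta} - \tensor{\Phi}{_\alpha^\gamma}\wedge\tensor{\omega}{_\gamma^\beta} - \tensor{\Phi}{_\alpha^\gamma}\wedge\tensor{\Phi}{_\gamma^\beta},
\]
take the trace on $\alpha\beta$, and extract the coefficient of $\theta^\sigma\wedge\theta^{\conj{\tau}}$ to obtain $\Hat{R}_{\alpha\conj{\beta}} - R_{\alpha\conj{\beta}}$. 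Combining this with the scalar trace and the definition of $P_{\alpha\conj{\beta}}$ then yields the stated formula. The main obstacle is purely algebraic bookkeeping: in the non-integrable setting a large number of $N$-coupled terms proliferate through the curvature computation, and the delicate point is to verify that although $\Hat{A}_{\alpha\beta}$ acquires a Nijenhuis correction, all Nijenhuis contributions to $\Hat{P}_{\alpha\conj{\beta}}$ must cancel so that its transformation law has the same shape as in the integrable case.
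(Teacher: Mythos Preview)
The paper does not prove this proposition at all; it simply cites it from \cite{Matsumoto-14}*{Proposition 3.6}, so there is no in-paper proof to compare against. Your outlined approach---compute the admissible coframe change, plug into the first structure equation \eqref{eq:first-structure-equation} with the CR-invariant Nijenhuis tensor held fixed, resolve the residual ambiguity via \eqref{eq:metric-compatibility-in-connection-form}, then differentiate the connection-form difference to obtain the Ricci and hence Schouten transformation---is exactly the standard method by which such transformation laws are derived (and is how \cite{Matsumoto-14} obtains it), so the strategy is sound.

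One minor caution in your bookkeeping: when you write ``this remaining ambiguity is eliminated by enforcing the metric compatibility \eqref{eq:metric-compatibility-in-connection-form} for $\Hat{\omega}$,'' remember that the hatted Levi form is $\Hat{h}_{\alpha\conj{\beta}} = e^u h_{\alpha\conj{\beta}}$ with respect to the frame $\{Z_\alpha\}$, so the compatibility condition reads $\Hat{\omega}_{\alpha\conj{\beta}} + \Hat{\omega}_{\conj{\beta}\alpha} = d(e^u h_{\alpha\conj{\beta}})$ and contributes the $\tensor{\delta}{_\alpha^\beta}u_\gamma\theta^\gamma$ piece. Also, in the final step you will find it cleaner to use the commutator identity in Proposition~\ref{prop:TW-commutation-on-densities} (or its tensor analogue) to rewrite the second-order terms in $u$ appearing in $\Hat{R}_{\alpha\conj{\beta}}$; this is where the potential $N$-contributions to $\Hat{P}_{\alpha\conj{\beta}}$ organize themselves and cancel, as you correctly anticipate.
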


It follows from \eqref{eq:first-structure-equation},
\eqref{eq:TW-connection-on-densities}, and \eqref{eq:Tanaka-Webster-curvature-form} that
commutation of covariant derivatives on densities is given as follows,
generalizing Gover--Graham \cite{Gover-Graham-05}*{Proposition 2.2}.

\begin{prop}
	\label{prop:TW-commutation-on-densities}
	For $f\in\mathcal{E}(w,w')$,
	\begin{align}
		\nabla_\alpha\nabla_{\conj{\beta}}f-\nabla_{\conj{\beta}}\nabla_\alpha f
		&=-ih_{\alpha{\conj{\beta}}}\nabla_0f+\frac{w-w'}{n+2}R_{\alpha{\conj{\beta}}}f,\\
		\nabla_\alpha\nabla_\beta f-\nabla_\beta\nabla_\alpha f
		&=-\tensor{N}{^{\conj{\gamma}}_\alpha_\beta}\nabla_{\conj{\gamma}}f
		+\frac{w-w'}{n+2}\cdot 2\tensor{V}{_\gamma^\gamma_\alpha_\beta}f
		=-\tensor{N}{^{\conj{\gamma}}_\alpha_\beta}\nabla_{\conj{\gamma}}f
		-\frac{w-w'}{n+2}(\nabla^\gamma N_{\gamma\alpha\beta})f,\\
		\nabla_\alpha\nabla_0f-\nabla_0\nabla_\alpha f
		&=\tensor{A}{_\alpha^{\conj{\gamma}}}\nabla_{\conj{\gamma}}f
		+\frac{w-w'}{n+2}\tensor{W}{_\gamma^\gamma_\alpha}f
		=\tensor{A}{_\alpha^{\conj{\gamma}}}\nabla_{\conj{\gamma}}f
		+\frac{w-w'}{n+2}(\nabla^\gamma A_{\alpha\gamma}
		-N_{\lambda\mu\alpha}A^{\lambda\mu})f,
	\end{align}
	where $\nabla_0$ is the $\theta$-component of the covariant derivative.
\end{prop}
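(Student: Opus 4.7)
The plan is to apply the standard curvature--torsion identity
\begin{equation}
	\nabla^2 s(X,Y) - \nabla^2 s(Y,X) = R(X,Y) s - \nabla_{T(X,Y)} s,
\end{equation}
valid for any section $s$ of a vector bundle equipped with a linear connection, where $R$ is the curvature of the bundle connection and $T$ is the torsion of the underlying connection on $TM$. In the abstract-index convention, $\nabla_\alpha\nabla_\beta f$ denotes the $(\alpha,\beta)$-component of the $(0,2)$-tensor $\nabla^2 f$, so each of the three claimed commutator formulas is this identity evaluated on an appropriate pair from $\{Z_\alpha,Z_{\conj{\alpha}},T\}$.

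First I would read off the torsion of the Tanaka--Webster connection from the structure equations. Combining \eqref{eq:first-structure-equation} with $d\theta=ih_{\alpha\conj{\beta}}\theta^\alpha\wedge\theta^{\conj{\beta}}$ and the parallelism of the Reeb vector field (no connection term on $\mathbb{R}T$), one extracts
\begin{equation}
	T(Z_\alpha,Z_\beta)=\tensor{N}{^{\conj{\gamma}}_\alpha_\beta}Z_{\conj{\gamma}},\quad T(Z_\alpha,Z_{\conj{\beta}})=ih_{\alpha\conj{\beta}}T,\quad T(Z_\alpha,T)=-\tensor{A}{_\alpha^{\conj{\gamma}}}Z_{\conj{\gamma}}.
\end{equation}
Next, by \eqref{eq:TW-connection-on-densities} the induced connection on the line bundle $\mathcal{E}(w,w')$ has connection $1$-form $\tfrac{w-w'}{n+2}\tensor{\omega}{_\gamma^\gamma}$, and since $\tensor{\omega}{_\gamma^\beta}\wedge\tensor{\omega}{_\beta^\gamma}$ vanishes by antisymmetry of the wedge under relabelling of dummies, its curvature is $\tfrac{w-w'}{n+2}\tensor{\Pi}{_\gamma^\gamma}$, acting on $f$ by multiplication.

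Tracing \eqref{eq:Tanaka-Webster-curvature-form} and evaluating $\tensor{\Pi}{_\gamma^\gamma}$ on each of the three pairs yields expressions in $R_{\alpha\conj{\beta}}$, $\tensor{V}{_\gamma^\gamma_\alpha_\beta}$, and $\tensor{W}{_\gamma^\gamma_\alpha}$ respectively (the other constituents of $\tensor{\Pi}{_\alpha^\beta}$ drop out because $\theta$ annihilates $Z_\alpha$ and $Z_{\conj{\beta}}$, while $\theta^\sigma,\theta^{\conj{\tau}}$ annihilate $T$). Substituting these together with the torsion values into the curvature--torsion identity produces the three formulas in the first form. The alternative right-hand sides stated in terms of $\nabla^\gamma N_{\gamma\alpha\beta}$ and $\nabla^\gamma A_{\alpha\gamma}-N_{\lambda\mu\alpha}A^{\lambda\mu}$ then follow immediately by tracing the explicit formulas for $\tensor{V}{_\alpha^\beta_\sigma_\tau}$ and $\tensor{W}{_\alpha^\beta_\gamma}$ recorded after \eqref{eq:Tanaka-Webster-curvature-form}; in particular, the Kronecker--$A$ contribution to the trace of $V$ disappears because $A_{\alpha\beta}$ is symmetric.

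There is no real obstacle: every ingredient is already in place, and the derivation is essentially bookkeeping. The only conceptual point needing care is the distinction between the iterated derivative $\nabla_X\nabla_Y s$ and the tensorial second derivative $\nabla^2 s(X,Y)=\nabla_X\nabla_Y s-\nabla_{\nabla_X Y}s$; it is precisely this discrepancy that produces the torsion correction $-\nabla_{T(X,Y)}s$ on passing from the bare curvature commutator to the form written above. Tracking signs and wedge-product evaluations on the mixed pairs $(Z_\alpha,Z_{\conj{\beta}})$ and $(Z_\alpha,T)$ is the only other place where one must be attentive.
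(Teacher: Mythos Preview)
Your proposal is correct and follows the same route the paper sketches: it simply cites equations \eqref{eq:first-structure-equation}, \eqref{eq:TW-connection-on-densities}, and \eqref{eq:Tanaka-Webster-curvature-form} as the ingredients, and you have made explicit that these feed into the standard Ricci identity $\nabla^2 s(X,Y)-\nabla^2 s(Y,X)=R(X,Y)s-\nabla_{T(X,Y)}s$ to produce the stated formulas. There is nothing to add.
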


The following transformation law of the connection on densities
is given by the same formulae as \cite{Gover-Graham-05}*{Proposition 2.3}.

\begin{prop}
	\label{prop:TW-on-densities-for-contact-form-change}
	Let $\theta$ and $\Hat{\theta}=e^u\theta$ be two contact forms.
	Then, for $f\in\mathcal{E}(w,w')$,
	\begin{align}
		\Hat{\nabla}_\alpha f
		&=\nabla_\alpha f+wu_\alpha f,\\
		\Hat{\nabla}_{\conj{\alpha}}f
		&=\nabla_{\conj{\alpha}}f+w'u_{\conj{\alpha}}f,\\
		\Hat{\nabla}_0f
		&=\nabla_0f-iu^\gamma\nabla_\gamma f+iu^{\conj{\gamma}}\nabla_{\conj{\gamma}}f
		+\frac{1}{n+2}((w+w')u_0+iw\tensor{u}{^\gamma_\gamma}-iw'\tensor{u}{_\gamma^\gamma}-i(w-w')u_\gamma u^\gamma)f,
	\end{align}
	where the last equality should be understood as an identity in $\mathcal{E}(w-1,w'-1)$.
	If $\bm{T}$ and $\Hat{\bm{T}}$ denote the weighted Reeb vector field
	for $\theta$ and for $\Hat{\theta}$, respectively,
	then $\nabla_0f$ in the right-hand side (resp.\ $\Hat{\nabla}_0f$ in the left-hand side) means
	$\nabla_{\bm{T}}f$ (resp.\ $\Hat{\nabla}_{\Hat{\bm{T}}}f$).
\end{prop}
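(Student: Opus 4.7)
The plan is to verify the formulas by direct computation, combining three ingredients: the defining relation \eqref{eq:TW-connection-on-densities} for the Tanaka--Webster connection on densities, the transformation rule of connection forms (Proposition \ref{prop:TW-for-contact-form-change}), and the commutator identity (Proposition \ref{prop:TW-commutation-on-densities}). First I would establish the relation between the $\theta$- and $\Hat{\theta}$-trivializations. Since $\Hat{h}=e^u h$, an $\Hat{h}$-unitary admissible coframe is obtained from an $h$-unitary one $\set{\theta^\alpha}$ by the rescaling $\theta^\alpha\mapsto e^{u/2}\theta^\alpha$ plus a $\theta$-correction that vanishes upon wedging with $\Hat{\theta}$, whence $\Hat{\zeta}=e^{(n+2)u/2}\zeta$ up to phase and $\Hat{\xi}=e^{-u/2}\xi$. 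Consequently, for $f\in\mathcal{E}(w,w')$, the trivializations satisfy $\tilde f_{\Hat{\theta}}=e^{(w+w')u/2}\tilde f_\theta$, supplying the ``chain-rule'' correction to be used in each step.

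For the first two formulas, I use that $Z_\alpha$ and $Z_{\conj{\alpha}}$ are $\theta$-independent frame vectors. Tracing Proposition \ref{prop:TW-for-contact-form-change} in the frame $\set{Z_\alpha}$ gives
\begin{equation*}
\tensor{\Hat{\omega}}{_\gamma^\gamma}-\tensor{\omega}{_\gamma^\gamma}
=(n+1)u_\gamma\theta^\gamma-u_{\conj{\gamma}}\theta^{\conj{\gamma}}
+i\bigl(\tensor{u}{^\gamma_\gamma}+(n+1)u_\gamma u^\gamma\bigr)\theta,
\end{equation*}
and the gauge change from $\set{Z_\alpha}$ to the $\Hat{h}$-unitary frame $\set{e^{-u/2}Z_\alpha}$ subtracts $\tfrac{n}{2}du$ from this trace. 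Its contraction with $Z_\alpha$ (respectively $Z_{\conj{\alpha}}$) is then $\tfrac{n+2}{2}u_\alpha$ (respectively $-\tfrac{n+2}{2}u_{\conj{\alpha}}$). Combined with the chain-rule term $\tfrac{w+w'}{2}u_\alpha\tilde f_\theta$ from $Z_\alpha(e^{(w+w')u/2}\tilde f_\theta)$, the difference $[\Hat{\nabla}_\alpha f]_\theta-[\nabla_\alpha f]_\theta$ evaluates to $\bigl(\tfrac{w+w'}{2}+\tfrac{w-w'}{n+2}\cdot\tfrac{n+2}{2}\bigr)u_\alpha\tilde f_\theta=wu_\alpha\tilde f_\theta$, and similarly the $\conj{\alpha}$-case yields $w'u_{\conj{\alpha}}$.

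For the $\nabla_0$-formula, I would solve $\Hat{\theta}(\Hat{T})=1$ and $\Hat{T}\intprod d\Hat{\theta}=0$ using $d\Hat{\theta}=e^u du\wedge\theta+e^u d\theta$ and $d\theta=ih_{\alpha\conj{\beta}}\theta^\alpha\wedge\theta^{\conj{\beta}}$, obtaining $\Hat{T}=e^{-u}(T-iu^\gamma Z_\gamma+iu^{\conj{\gamma}}Z_{\conj{\gamma}})$. The weighted Reeb $\Hat{\bm{T}}=\Hat{T}\otimes\abs{\Hat{\xi}}^{-2}$ then trivializes in $\theta$-scale to $T-iu^\gamma Z_\gamma+iu^{\conj{\gamma}}Z_{\conj{\gamma}}$, so
\begin{equation*}
\Hat{\nabla}_0 f=\Hat{\nabla}_T f-iu^\gamma\Hat{\nabla}_\gamma f+iu^{\conj{\gamma}}\Hat{\nabla}_{\conj{\gamma}}f.
\end{equation*}
Substituting the horizontal formulas already established, and expanding $\Hat{\nabla}_T f$ by contracting the trace correction with $T$ (whose coefficient of $\theta$ is $i\tensor{u}{^\gamma_\gamma}+i(n+1)u_\gamma u^\gamma-\tfrac{n}{2}u_0$) together with the chain-rule contribution $\tfrac{w+w'}{2}u_0\tilde f_\theta$, the residual terms rearrange into the claimed shape once one invokes the identity $i(\tensor{u}{^\gamma_\gamma}-\tensor{u}{_\gamma^\gamma})=nu_0$. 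This identity is the trace (with respect to $h^{\alpha\conj{\beta}}$) of the first commutator formula of Proposition \ref{prop:TW-commutation-on-densities} applied to $u\in\mathcal{E}(0,0)$.

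The main technical obstacle is this rearrangement in the last step: the $(w+w')u_0/2$ chain-rule term, the $\tfrac{w-w'}{n+2}$-weighted trace correction, and the cross terms $u^\gamma u_\gamma$ produced by contracting $\Hat{\nabla}$ with $\Hat{\bm{T}}$ must balance precisely so that the residual recombines into the asymmetric expression $iw\tensor{u}{^\gamma_\gamma}-iw'\tensor{u}{_\gamma^\gamma}$ appearing in the statement, and the commutator identity above is exactly what enables this. It is reassuring that neither the Nijenhuis tensor $N$ nor the Tanaka--Webster torsion $A$ appear in the final formulas; this is already built into Proposition \ref{prop:TW-for-contact-form-change}, whose difference $\Hat{\omega}-\omega$ involves neither.
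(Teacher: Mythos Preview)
Your approach is correct and uses the same core ingredient as the paper---namely, tracing Proposition~\ref{prop:TW-for-contact-form-change} and tracking the gauge change to an $\Hat{h}$-unitary frame. The organizational difference is that the paper first reduces to the single case $(w,w')=(-n-2,0)$ (the canonical bundle), computes $\Hat{\nabla}\zeta-\nabla\zeta$ as a $1$-form directly, and then obtains general $(w,w')$ by linearity in the weights together with complex conjugation (which naturally produces the $u^\gamma{}_\gamma$ term from $\mathcal{K}$ and the $u_\gamma{}^\gamma$ term from $\conj{\mathcal{K}}$). Because you work with general $(w,w')$ from the start, your raw output has the form $\tfrac{1}{n+2}\bigl((w+(n+1)w')u_0+i(w-w')u^\gamma{}_\gamma\bigr)$, and you must then invoke the traced commutator identity $i(u^\gamma{}_\gamma-u_\gamma{}^\gamma)=nu_0$ from Proposition~\ref{prop:TW-commutation-on-densities} to recast it into the symmetric form stated in the proposition. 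The paper's reduction avoids this extra step. One small imprecision: your ``trivialization'' $\tilde f_\theta$ for $\mathcal{E}(w,w')$ with $w\neq w'$ is not canonically determined by $\theta$ alone (it depends on the phase of $\xi$), but since you are computing the difference of two connections using the same locally chosen $\xi$, this does not affect the argument.
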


\begin{proof}
	We prove the case $(w,w')=(-n-2,0)$, i.e., the case of the canonical bundle $\mathcal{K}$;
	then the general case follows immediately.
	It suffices to consider $\nabla\zeta$ and $\Hat{\nabla}\zeta$ for some nowhere-vanishing
	local section $\zeta$ of $\mathcal{K}$.
	Moreover, without loss of generality, we can use a unitary local frame $\set{Z_\alpha}$ with respect to $\theta$
	to show the formulae.
	So we let $\zeta=\theta\wedge\theta^1\wedge\dots\wedge\theta^n$ by using the dual admissible coframe
	of such a unitary frame $\set{Z_\alpha}$,
	and let $\tensor{\omega}{_\alpha^\beta}$ and $\tensor{\Hat{\omega}}{_\alpha^\beta}$ be as in
	Proposition \ref{prop:TW-for-contact-form-change}. Then we already know that
	\begin{equation}
		\nabla\zeta=-\tensor{\omega}{_\gamma^\gamma}\otimes\zeta.
	\end{equation}
	On the other hand, we can take $\set{e^{-u/2}Z_\alpha}$ as a unitary frame with respect to $\Hat{\theta}$.
	The associated admissible coframe $\set{\tilde{\theta}^\alpha}$ with respect to $\Hat{\theta}$ is given by
	$\tilde{\theta}^\alpha=e^{u/2}\Hat{\theta}^\alpha\equiv e^{u/2}\theta^\alpha$ mod $\theta$,
	and hence $\Hat{\zeta}=\Hat{\theta}\wedge\tilde{\theta}^1\wedge\dots\wedge\tilde{\theta}^n$,
	which equals $e^{(1+n/2)u}\zeta$, satisfies
	\begin{equation}
		\Hat{\nabla}\Hat{\zeta}=-\tensor{\tilde{\omega}}{_\gamma^\gamma}\otimes\Hat{\zeta},
		\qquad\text{which implies}\qquad
		\Hat{\nabla}\zeta=-\tensor{\tilde{\omega}}{_\gamma^\gamma}\otimes\zeta
		-\left(1+\frac{n}{2}\right)du\otimes\zeta,
	\end{equation}
	where $\tensor{\tilde{\omega}}{_\alpha^\beta}$ is the connection forms of $\Hat{\nabla}$
	with respect to $\set{e^{-u/2}Z_\alpha}$.
	Since $\tensor{\tilde{\omega}}{_\alpha^\beta}$ and $\tensor{\Hat{\omega}}{_\alpha^\beta}$ are related by
	$\tensor{\tilde{\omega}}{_\alpha^\beta}=\tensor{\Hat{\omega}}{_\alpha^\beta}-(1/2)\tensor{\delta}{_\alpha^\beta}du$,
	we conclude that
	\begin{equation}
		\Hat{\nabla}\zeta=-\tensor{\Hat{\omega}}{_\gamma^\gamma}\otimes\zeta-du\otimes\zeta.
	\end{equation}
	Then it follows from Proposition \ref{prop:TW-for-contact-form-change} that
	\begin{equation}
		\Hat{\nabla}\zeta
		=\nabla\zeta-((n+2)u_\gamma\theta^\gamma
		+(u_0+i\tensor{u}{^\gamma_\gamma}+i(n+1)u_\gamma u^\gamma)\theta)\otimes\zeta,
	\end{equation}
	which implies the formulae to be shown in view of the fact that
	$\Hat{\bm{T}}=\bm{T}-iu^\gamma Z_\gamma+iu^{\conj{\gamma}}Z_{\conj{\gamma}}$.
\end{proof}

\section{The CR Killing operator}
\label{sec:CR-Killing-operator}

The formula \eqref{eq:CR-Killing-operator} of the CR Killing operator $D$ now makes sense
thanks to various definitions introduced in Section \ref{sec:prelim}.
It can be checked by using
Propositions \ref{prop:TW-for-contact-form-change} and \ref{prop:TW-on-densities-for-contact-form-change}
that the right-hand side of \eqref{eq:CR-Killing-operator} is independent of the choice of a contact form $\theta$.
Moreover we want to note that,
in view of Proposition \ref{prop:TW-commutation-on-densities}, $D$ can also be expressed as
\begin{equation}
	\label{eq:CR-Killing-operator-reexpressed}
	Df=
	(i\nabla_{\alpha}\nabla_{\beta}f
	-A_{\alpha\beta}f
	-iN_{\beta\alpha\gamma}\nabla^\gamma f,
	-i\nabla_{\conj{\alpha}}\nabla_{\conj{\beta}}f
	-A_{\conj{\alpha}\conj{\beta}}f
	+iN_{\conj{\beta}\conj{\alpha}\conj{\gamma}}\nabla^{\conj{\gamma}}f).
\end{equation}

For integrable CR structures (i.e., if $N=0$), the operator $D$ reduces to
\begin{equation}
	Df=
	(i\nabla_{(\alpha}\nabla_{\beta)}f-A_{\alpha\beta}f,
	-i\nabla_{(\conj{\alpha}}\nabla_{\conj{\beta})}f
	-A_{\conj{\alpha}\conj{\beta}}f).
\end{equation}
This operator essentially appears in the literature.
The ``new'' CR differential complex of Akahori--Garfield--Lee \cite{Akahori-Garfield-Lee-02}
contains the mapping
$f\mapsto \nabla_{(\conj{\alpha}}\nabla_{\conj{\beta})}f-iA_{\conj{\alpha}\conj{\beta}}f$
(acting on complex-valued functions) as the first operator, and it is known that it describes
the infinitesimal action of Kuranishi's ``wiggle,''
i.e., internal moves of embedded CR manifolds within a complex manifold.
The same operator is revisited in Hirachi--Marugame--Matsumoto
\cite{Hirachi-Marugame-Matsumoto-17} from the viewpoint of Fefferman's ambient metric construction.

In this section, we first give the derivation of the CR Killing operator $D$
as what describes trivial infinitesimal changes of compatible almost CR structures caused by
contact Hamiltonian vector fields.
Then we discuss the fact that $D$ also appears in the context of asymptotic expansion of
ACHE metrics (asymptotically complex hyperbolic Einstein metrics) and the CR obstruction tensor,
whose basic theory is developed in the author's previous papers \cites{Matsumoto-14,Matsumoto-16}.

\subsection{Trivial infinitesimal deformations}

Let $f\in\Re\mathcal{E}(1,1)$.
Given a fixed contact form $\theta$, $f$ is trivialized in the manner discussed in Section \ref{subsec:basics}
and identified with a smooth real-valued function.
We define the associated \emph{contact Hamiltonian vector field} $X_f$ on $M$ by
\begin{equation}
	\begin{cases}
		\theta(X_f)=f,\\
		d\theta(X_f,Y)=-df(Y),\qquad Y\in H.
	\end{cases}
\end{equation}
The vector field $X_f$ is irrelevant to the choice of $\theta$.
Indeed, if $\Hat{\theta}=e^u\theta$,
then since the corresponding trivialization of the density is given by $\Hat{f}=e^uf$,
we have $\Hat{\theta}(X_f)=e^uf=\Hat{f}$ and
\begin{equation}
	d\Hat{\theta}(X_f,Y)
	=e^u(d\theta+du\wedge\theta)(X_f,Y)
	=-e^u(df(Y)+f\,du(Y))
	=-d\Hat{f}(Y)
	\quad\text{for $Y\in H$}.
\end{equation}
In terms of the Reeb vector field $T$ associated with $\theta$, one can explicitly write
\begin{equation}
	\label{eq:contact-Hamiltonian-vector-field-in-terms-of-Reeb}
	X_f=fT+i(\nabla^\alpha f)Z_\alpha-i(\nabla^{\conj{\alpha}}f)Z_{\conj{\alpha}},
\end{equation}
where the indices $\alpha$ and $\conj{\alpha}$ are raised by the unweighted Levi form.

By Cartan's formula, it follows that the Lie derivative $\mathcal{L}_{X_f}$ satisfies
\begin{equation}
	\label{eq:Lie-derivative-of-theta-by-contact-Hamiltonian-flow}
	\mathcal{L}_{X_f}\theta=d(\theta(X_f))+d\theta(X_f,\cdot)=(Tf)\theta,
\end{equation}
which implies that $X_f$ is a contact vector field.
Moreover, the restriction of $d\theta$ to $H$ rescales conformally, with the same conformal factor
as the one for $\theta$, as we can see by
\begin{equation}
	\label{eq:Lie-derivative-of-d-theta-by-contact-Hamiltonian-flow}
	\mathcal{L}_{X_f}d\theta
	=d(d\theta(X_f,\cdot))=-d(df-(Tf)\theta)\equiv(Tf)d\theta\mod\theta.
\end{equation}

These computations also imply that the compatibility of $J$ is preserved by the flow $\Fl_t$ generated by $X_f$.
Indeed, \eqref{eq:Lie-derivative-of-theta-by-contact-Hamiltonian-flow} and
\eqref{eq:Lie-derivative-of-d-theta-by-contact-Hamiltonian-flow} imply that
$\Fl_t^*(d\theta)\equiv d\theta_t$ mod $\theta$, where $\theta_t=\Fl_t^*\theta$.
Consequently, if we pull back the symmetric form
$d\theta(\mathord{\cdot},J\mathord{\cdot})$ on $H$ by the flow, then we get
$d\theta_t(\mathord{\cdot},J_t\mathord{\cdot})$ on $H$,
where $J_t=(\Fl_t^{-1})_*\circ J\circ(\Fl_t)_*$.
Therefore, the latter bilinear form is symmetric and positive definite, which
implies that $J_t$ satisfies the compatibility condition.
Thus we get a family $J_t$ of compatible almost CR structures on the same underlying contact manifold $(M,H)$.

In general, if $\tilde{J}$ is a compatible almost CR structure on $(M,H)$ sufficiently close to $J$ pointwisely,
then we can define the \emph{deformation tensor} $\varphi$ as the bundle homomorphism $H^{1,0}\to H^{0,1}$ such that
\begin{equation}
	\tilde{T}^{1,0}M=\bigsqcup_{p\in M}\set{Z+\varphi_p(Z)|Z\in H^{1,0}_p}.
\end{equation}
The homomorphism $\varphi$ can be expressed as $\tensor{\varphi}{_\alpha^{\conj{\beta}}}$ in index notation.
We set $\tensor{\varphi}{_{\conj{\alpha}}^\beta}=\conj{\tensor{\varphi}{_\alpha^{\conj{\beta}}}}$
so that $\varphi$ is understood to be a real tensor.

We apply this general definition to our $J_t=(\Fl_t^{-1})_*\circ J\circ(\Fl_t)_*$
and let $\varphi_t$ be the deformation tensor of $J_t$. We set
\begin{equation}
	\psi=\left.\frac{d\varphi_t}{dt}\right|_{t=0},
\end{equation}
i.e., $\psi$ is the ``derivative'' of our family of almost CR structures $J_t$
taken in terms of the deformation tensor.
If we simply regard $J_t$ itself as a tensor and differentiate it, then the derivative at $t=0$,
which is nothing but the Lie derivative of $J$, can be computed as follows.
For $Z\in H^{1,0}$,
\begin{equation}
	J_tZ=J_t((Z+t\psi(Z))-t\psi(Z))=i(Z+t\psi(Z))+it\psi(Z)+O(t^2)=iZ+2it\psi(Z)+O(t^2),
\end{equation}
and hence
\begin{equation}
	\label{eq:deformation-tensor-and-Lie-derivative-of-J}
	\mathcal{L}_{X_f}J
	=\lim_{t\to 0}\frac{J_t-J}{t}
	=-2J\circ\psi.
\end{equation}

It is immediate from the definition that $\psi$ can be seen as an element of
$\Re(\tensor{\mathcal{E}}{_\alpha_\beta}(1,1)\oplus\tensor{\mathcal{E}}{_{\conj{\alpha}}_{\conj{\beta}}}(1,1))$
by lowering the upper index by the weighted Levi form.
In addition, the compatibility of $J_t$ and \eqref{eq:deformation-tensor-and-Lie-derivative-of-J} imply that
$d\theta(\mathord{\cdot},J\circ\psi(\mathord{\cdot}))$ is symmetric on $H$,
by which we can conclude that
\begin{equation}
	\psi\in\Re(\mathcal{E}_{(\alpha\beta)}(1,1)\oplus\mathcal{E}_{(\conj{\alpha}\conj{\beta})}(1,1)).
\end{equation}
The following proposition claims that
the assignment $f\mapsto\psi$ actually equals the operator $D$ defined by \eqref{eq:CR-Killing-operator}.

\begin{prop}
	\label{prop:CR-Killing-operator-as-trivial-deformations}
	The infinitesimal deformation tensor $\psi$
	associated with the contact Hamiltonian vector field $X_f$ is given by $\psi=Df$.
\end{prop}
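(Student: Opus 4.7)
The plan is to compute $\psi(Z_\alpha)$ directly in a chosen local frame in terms of the Tanaka--Webster connection and match the result with formula \eqref{eq:CR-Killing-operator-reexpressed}. The first step is to identify $\psi(Z_\alpha)$ with the $H^{0,1}$-component of the Lie bracket $[X_f, Z_\alpha]$. Starting from the defining relation $\mathcal{L}_{X_f}J = -2J \circ \psi$ and the standard identity $(\mathcal{L}_{X_f}J)(Y) = [X_f, JY] - J[X_f, Y]$ for vector fields $Y$, I would apply this to $Y = Z_\alpha$ and use $JZ_\alpha = iZ_\alpha$. Since $X_f$ is a contact vector field, $[X_f, Z_\alpha]$ has no $T$-component and thus lies in $\mathbb{C}H$; writing $[X_f, Z_\alpha] = P + Q$ with $P \in H^{1,0}$ and $Q \in H^{0,1}$, one finds $(\mathcal{L}_{X_f}J)(Z_\alpha) = 2iQ$, and matching with $-2J\circ\psi(Z_\alpha) = 2i\psi(Z_\alpha)$ yields $\psi(Z_\alpha) = [X_f, Z_\alpha]^{0,1}$.

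With this reduction, the main calculation is to expand $[X_f, Z_\alpha]^{0,1}$ using $X_f = fT + i(\nabla^\mu f)Z_\mu - i(\nabla^{\conj{\mu}}f)Z_{\conj{\mu}}$. I would distribute the bracket over the three summands and use the Lie brackets that follow from the Tanaka--Webster structure equations \eqref{eq:first-structure-equation} and \eqref{eq:definition-of-Nijenhuis-tensor}. The relevant $H^{0,1}$-parts are $[T, Z_\alpha]^{0,1} = -\tensor{A}{_\alpha^{\conj{\gamma}}}Z_{\conj{\gamma}}$ (from the torsion piece involving $A$), $[Z_\mu, Z_\alpha]^{0,1} = -\tensor{N}{^{\conj{\gamma}}_\mu_\alpha}Z_{\conj{\gamma}}$ by \eqref{eq:definition-of-Nijenhuis-tensor}, and $[Z_{\conj{\mu}}, Z_\alpha]^{0,1} = -\tensor{\omega}{_{\conj{\mu}}^{\conj{\gamma}}}(Z_\alpha)Z_{\conj{\gamma}}$, a pure connection-form piece. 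Accordingly, the $fT$-piece contributes $-f\tensor{A}{_\alpha^{\conj{\gamma}}}Z_{\conj{\gamma}}$; the $i(\nabla^\mu f)Z_\mu$-piece contributes a Nijenhuis term; and the $-i(\nabla^{\conj{\mu}}f)Z_{\conj{\mu}}$-piece contributes, after combining the connection-form contribution from $[Z_{\conj{\mu}}, Z_\alpha]^{0,1}$ with the one arising from $Z_\alpha(\nabla^{\conj{\mu}}f) = \nabla_\alpha\nabla^{\conj{\mu}}f - \tensor{\omega}{_{\conj{\nu}}^{\conj{\mu}}}(Z_\alpha)\nabla^{\conj{\nu}}f$ (these precisely cancel), the tensorial second-derivative term $i\nabla_\alpha\nabla^{\conj{\gamma}}f\, Z_{\conj{\gamma}}$.

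Summing the three contributions and lowering the $\conj{\gamma}$-index to $\beta$ by the Levi form $h_{\beta\conj{\gamma}}$ yields an explicit formula for $\psi_{\alpha\beta}$. Using the skew-symmetry $N_{\gamma\alpha\beta} = -N_{\gamma\beta\alpha}$ of the Nijenhuis tensor in its last two indices to rearrange the $N$-term, I would then compare the result with the first component of $Df$ in \eqref{eq:CR-Killing-operator-reexpressed} and identify them; the conjugate component in $\mathcal{E}_{(\conj{\alpha}\conj{\beta})}(1,1)$ follows at once from the reality of both $\psi$ and $Df$.

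The computation is largely mechanical once the Lie brackets above are in hand; the principal care required is the sign and index bookkeeping for the Nijenhuis tensor, whose skew-symmetry together with the cyclic identity \eqref{eq:Nijenhuis-symmetry} admits several equivalent presentations, and one has to unwind them correctly to recognize the expression in \eqref{eq:CR-Killing-operator-reexpressed}. The cancellation of connection-form contributions in the $-i(\nabla^{\conj{\mu}}f)Z_{\conj{\mu}}$-piece, though routine, is also a useful consistency check: it guarantees that the final formula is manifestly tensorial and independent of the choice of local frame, in agreement with $\psi$ being intrinsically defined.
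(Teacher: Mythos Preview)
Your proposal is correct and follows essentially the same approach as the paper. Both arguments first reduce to $\psi(Z_\alpha)=[X_f,Z_\alpha]^{0,1}$ via $\mathcal{L}_{X_f}J=-2J\circ\psi$, and then extract the $H^{0,1}$-component of the bracket using the Tanaka--Webster structure equations; the only difference is organizational---the paper computes $\theta^{\conj{\beta}}([X_f,Z_\alpha])=-d\theta^{\conj{\beta}}(X_f,Z_\alpha)-Z_\alpha(\theta^{\conj{\beta}}(X_f))$ and evaluates $d\theta^{\conj{\beta}}$ on $(X_f,Z_\alpha)$ in one stroke, whereas you expand $X_f$ and handle the brackets $[T,Z_\alpha]$, $[Z_\mu,Z_\alpha]$, $[Z_{\conj{\mu}},Z_\alpha]$ separately, which amounts to the same calculation.
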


\begin{proof}
	Let $\set{Z_\alpha}$ be a local frame of $H^{1,0}$.
	Note first that
	\begin{equation}
		(\mathcal{L}_{X_f}J)(Z_\alpha)
		=[X_f,iZ_\alpha]-J[X_f,Z_\alpha]
		=2i[X_f,Z_\alpha]_{0,1},
	\end{equation}
	where the subscript ``0,1'' denotes the projection from $\mathbb{C}H=H^{1,0}\oplus H^{0,1}$
	onto the second summand.
	On the other hand, \eqref{eq:deformation-tensor-and-Lie-derivative-of-J} implies
	$(\mathcal{L}_{X_f}J)(Z_\alpha)=2i\psi(Z_\alpha)$,
	by which we conclude that $\psi(Z_\alpha)=[X_f,Z_\alpha]_{0,1}$, or equivalently,
	\begin{equation}
		\tensor{\psi}{_\alpha^{\conj{\beta}}}=\theta^{\conj{\beta}}([X_f,Z_\alpha]).
	\end{equation}
	The right-hand side can be rewritten as
	\begin{equation}
		\theta^{\conj{\beta}}([X_f,Z_\alpha])
		=-d\theta^{\conj{\beta}}(X_f,Z_\alpha)-Z_\alpha(\theta^{\conj{\beta}}(X_f))
		=-d\theta^{\conj{\beta}}(X_f,Z_\alpha)+iZ_\alpha\nabla^{\conj{\beta}}f,
	\end{equation}
	where the second equality follows by \eqref{eq:contact-Hamiltonian-vector-field-in-terms-of-Reeb}.
	Furthermore, it follows from \eqref{eq:first-structure-equation} and
	\eqref{eq:contact-Hamiltonian-vector-field-in-terms-of-Reeb} that
	\begin{equation}
		\begin{split}
			d\theta^{\conj{\beta}}(X_f,Z_\alpha)
			&=\left(\theta^{\conj{\gamma}}\wedge\tensor{\omega}{_{\conj{\gamma}}^{\conj{\beta}}}
				-\tensor{A}{_\gamma^{\conj{\beta}}}\theta^\gamma\wedge\theta
				+\frac{1}{2}\tensor{N}{^{\conj{\beta}}_\gamma_\sigma}\theta^\gamma\wedge\theta^\sigma\right)
				(X_f,Z_\alpha) \\
			&=\theta^{\conj{\gamma}}(X_f)\tensor{\omega}{_{\conj{\gamma}}^{\conj{\beta}}}(Z_\alpha)
			+\tensor{A}{_\alpha^{\conj{\beta}}}\theta(X_f)
			+\tensor{N}{^{\conj{\beta}}_\gamma_\alpha}\theta^\gamma(X_f) \\
			&=-i\tensor{\omega}{_{\conj{\gamma}}^{\conj{\beta}}}(Z_\alpha)\nabla^{\conj{\gamma}}f
			+\tensor{A}{_\alpha^{\conj{\beta}}}f
			+i\tensor{N}{^{\conj{\beta}}_\gamma_\alpha}\nabla^\gamma f,
		\end{split}
	\end{equation}
	which implies
	$\tensor{\psi}{_\alpha^{\conj{\beta}}}
	=i\nabla_\alpha\nabla^{\conj{\beta}}f-\tensor{A}{_\alpha^{\conj{\beta}}}f
	-i\tensor{N}{^{\conj{\beta}}_\alpha_\gamma}\nabla^\gamma f$.
	This means $\psi=Df$ because of \eqref{eq:CR-Killing-operator-reexpressed}.
\end{proof}

\subsection{As an operator whose adjoint annihilates the CR obstruction tensor}

The CR obstruction tensor $\mathcal{O}_{\alpha\beta}\in\mathcal{E}_{(\alpha\beta)}(-n,-n)$
of a contact manifold $(M,H)$ with a compatible almost CR structure $J$
is introduced in \cite{Matsumoto-14} when $\dim M=2n+1\ge 5$.
Recall that $\mathcal{O}_{\alpha\beta}$ vanishes when $J$ is integrable.
In this paper, we prefer regarding $\mathcal{O}$ as a section of
$\Re(\mathcal{E}_{(\alpha\beta)}(-n,-n)\oplus\mathcal{E}_{({\conj{\alpha}}{\conj{\beta}})}(-n,-n))$
by setting
$\mathcal{O}=(\mathcal{O}_{\alpha\beta},\mathcal{O}_{\conj{\alpha}\conj{\beta}})$,
where $\mathcal{O}_{\conj{\alpha}\conj{\beta}}=\conj{\mathcal{O}_{\alpha\beta}}$.
Then it is known \cite{Matsumoto-14}*{Theorem 1.2 (3)} that
\begin{equation}
	\label{eq:obstruction-tensor-annihilation}
	D^*\mathcal{O}=0,
\end{equation}
where
\begin{equation}
	D^*\colon
	\Re(\mathcal{E}_{(\alpha\beta)}(-n,-n)
	\oplus\mathcal{E}_{(\conj{\alpha}\conj{\beta})}(-n,-n))
	\to\Re\mathcal{E}(-n-2,-n-2)
\end{equation}
is the formal adjoint of the CR Killing operator \eqref{eq:CR-Killing-operator}.

The CR obstruction tensor is an analog of the Fefferman--Graham conformal obstruction tensor
\cites{Fefferman-Graham-85,Fefferman-Graham-12}, which we write $\mathcal{O}_{ij}$,
defined for conformal manifolds of even dimension $\ge 4$.
To define $\mathcal{O}_{ij}$, one considers the asymptotic Dirichlet problem for the Einstein equation
for AH metrics (asymptotically hyperbolic metrics) with Dirichlet data given by a conformal manifold $(M,[g])$;
then $\mathcal{O}_{ij}$ is the obstruction to the existence of formal power series solutions to this problem.
In view of the fact that any AH Einstein metric admits a so-called polyhomogeneous expansion
\cites{Chrusciel-Delay-Lee-Skinner-05,Biquard-Herzlich-14} at boundary at infinity in an appropriate gauge,
one can also say that $\mathcal{O}_{ij}$ is the first logarithmic term coefficient
of the expansion of an AH Einstein metric $g_+$ with prescribed conformal infinity $(M,[g])$,
assuming that such an metric exists
(note that $\mathcal{O}_{ij}$ itself can always be defined for any conformal manifold, no matter $g_+$ exists or not).

In the standard notation in conformal geometry,
$\mathcal{O}_{ij}$ is a symmetric section of the weighted tensor bundle $\mathcal{E}_{ij}[-n+2]$,
and the trace of $\mathcal{O}_{ij}$ always vanishes;
so we may write $\mathcal{O}_{ij}\in\mathcal{E}_{(ij)_0}[-n+2]$.
In 4 dimensions, $\mathcal{O}_{ij}$ is also known as the Bach tensor $B_{ij}$, which is explicitly given by
\begin{equation}
	B_{ij}=\nabla^k C_{ijk}+P^{kl}W_{ikjl},
\end{equation}
where $P$, $C$, and $W$ are the Schouten, the Cotton, and the Weyl tensor, respectively,
of any fixed representative metric $g$ of the conformal class $[g]$
(see \cite{Fefferman-Graham-12} for details).
It is easy to see that $\mathcal{O}_{ij}$ can also be expressed in general dimensions
by a universal formula given in terms of the curvature tensor and its covariant derivatives,
although deriving the concrete formula is quite a hard task.

Likewise, the CR obstruction tensor $\mathcal{O}_{\alpha\beta}$ of $(M,H,J)$ is
the obstruction to the existence of formal power series solutions to the asymptotic Dirichlet problem
for the Einstein equation for ACH metrics with conformal infinity $(M,H,J)$.
If a contact form $\theta$ is fixed, then
$\mathcal{O}_{\alpha\beta}$ can be universally expressed in terms of the Tanaka--Webster local invariants,
although the concrete formula is currently lacking in any dimensions $\ge 5$.

\begin{rem}
	Originally, the definition of the conformal obstruction tensor $\mathcal{O}_{ij}$ is
	given in \cite{Fefferman-Graham-85} using the ambient metrics,
	whose formal expansion theory is basically parallel with that of AH Einstein metrics.
	On the other hand, the ambient metric approach in CR geometry is only partly available.
	For integrable CR structures the notion of ambient metrics exists
	(and it is actually the origin of the notion; see Fefferman \cite{Fefferman-76}),
	but the ambient metric is not yet known for general compatible almost CR structures,
	and the CR obstruction tensor $\mathcal{O}_{\alpha\beta}$
	can be currently only defined via ACH Einstein metrics.
\end{rem}

Equality \eqref{eq:obstruction-tensor-annihilation} follows from the (contracted) second Bianchi identity
satisfied by the Ricci tensor of ACH Einstein metrics.
More precisely, to show \eqref{eq:obstruction-tensor-annihilation},
we use an ACH metric $g_+$ with the prescribed conformal infinity
having a power series asymptotic expansion
such that its Ricci tensor $\Ric(g_+)$ is asymptotic to $\lambda g_+$, where $\lambda$ is a negative constant,
to a certain sufficiently high order.
Then the first non-vanishing coefficient of the power series expansion of $\Ric(g_+)-\lambda g_+$
contains the CR obstruction tensor $\mathcal{O}$,
and the second Bianchi identity implies some equality satisfied by $\mathcal{O}$,
which is \eqref{eq:obstruction-tensor-annihilation}, actually.

If we employ the same procedure in conformal geometry, the equality that we obtain is the
well-known (e.g., \cite{Fefferman-Graham-12}*{Theorem 3.8 (2)})
divergence-freeness of the conformal obstruction tensor
\begin{equation}
	\label{eq:divergence-freeness-in-conformal-geometry}
	\nabla^j\mathcal{O}_{ij}=0.
\end{equation}
The divergence operator here can be naturally understood as the formal adjoint of the conformal Killing operator
\begin{equation}
	K\colon\mathcal{E}_i[2]\to\mathcal{E}_{(ij)_0}[2],\qquad
	v_i\mapsto\nabla_{(i}v_{j)_0}.
\end{equation}
The parallelism of $K$ and $D$ is the reason why we suggest the name ``the CR Killing operator''
for the operator $D$.

Taking advantage of this opportunity, we want to present that
there is another way to show \eqref{eq:divergence-freeness-in-conformal-geometry} in conformal geometry
or \eqref{eq:obstruction-tensor-annihilation} in CR geometry,
which is based on the variational formula of the total integral of the $Q$-curvature
\cites{Graham-Hirachi-05,Matsumoto-16}.
In conformal geometry, we argue as follows.
Recall that the conformal Killing operator $K$ describes the trivial infinitesimal deformation of
conformal structure induced by vector fields in $\mathcal{E}^i\cong\mathcal{E}_i[2]$.
Consequently, it follows from the variational formula that
\begin{equation}
	0=\int_M\braket{Kv,\mathcal{O}}=\int_M\braket{v,K^*\mathcal{O}}
	\qquad\text{for any $v=v_i\in\mathcal{E}_i[2]$},
\end{equation}
which implies that $K^*\mathcal{O}=0$.
The same argument works in CR geometry as well, because the CR Killing operator $D$ describes
trivial infinitesimal deformations of compatible almost CR structure as we saw in
Proposition \ref{prop:CR-Killing-operator-as-trivial-deformations}.

\section{Parabolic geometries, tractor connections, BGG operators}
\label{sec:general-theory-on-parabolic-geometry}

In this section, we summarize some general notions and results from the theory of parabolic geometries
that we use in the subsequent developments.
For more background and details, see the extensive reference of \v{C}ap--Slov\'ak \cite{Cap-Slovak-09}
and, for various specific aspects,
Yamaguchi \cite{Yamaguchi-93}, Sharpe \cite{Sharpe-97}, Calderbank--Diemer \cite{Calderbank-Diemer-01},
for example.

\subsection{Cartan geometries}

Let $G$ be a Lie group and $H$ a closed subgroup.
Their Lie algebras are denoted by $\mathfrak{g}$ and $\mathfrak{h}$, respectively.
Then a \emph{Cartan geometry} of type $(G,H)$ is a smooth manifold $M$ of the same dimension as
$G/H$ equipped with a principal $H$-bundle $\mathcal{G}\to M$
and an $H$-equivariant $\mathfrak{g}$-valued 1-form $\omega$ on $\mathcal{G}$ satisfying the following conditions:
\begin{enumerate}[(i)]
	\item $\omega(\zeta_X)=X$ for each $X\in\mathfrak{h}$, where $\zeta_X$ is the fundamental vector field;
	\item $\omega_u\colon T_u\mathcal{G}\to\mathfrak{g}$ is a linear isomorphism for all $u\in\mathcal{G}$.
\end{enumerate}
The 1-form $\omega$ is called a \emph{Cartan connection}.
Note that $\omega_u$ induces an isomorphism $T_{\pi(u)}M\cong\mathfrak{g}/\mathfrak{h}$ for each $u\in\mathcal{G}$,
where $\pi$ is the projection $\mathcal{G}\to M$, and thus
$TM$ is identified with the associated vector bundle $\mathcal{G}\times_H\mathfrak{g}/\mathfrak{h}$.

The \emph{curvature form} of a Cartan geometry $(\mathcal{G},\omega)$ is
the $\mathfrak{g}$-valued 2-form $K$ on $\mathcal{G}$ defined by
\begin{equation}
	K=d\omega+\frac{1}{2}[\omega\wedge\omega].
\end{equation}
An equivalent notion is
the \emph{curvature function} $\kappa\colon\mathcal{G}\to\bigwedge^2\mathfrak{g}^*\otimes\mathfrak{g}$,
which is defined by
\begin{equation}
	\kappa(u)(X,Y)=K_u(\omega_u^{-1}(X),\omega_u^{-1}(Y)),
	\qquad X,\,Y\in\mathfrak{g}.
\end{equation}
It follows from the definition that $K$ is a horizontal $H$-equivariant 2-form, and hence
$\kappa$ can also be seen as a function with values in
$\bigwedge^2(\mathfrak{g}/\mathfrak{h})^*\otimes\mathfrak{g}$.

The \emph{torsion form} of a Cartan geometry $(\mathcal{G},\omega)$ is defined to be
$\pi_\mathfrak{g}\circ K$, where $\pi_\mathfrak{g}\colon\mathfrak{g}\to\mathfrak{g}/\mathfrak{h}$ is
the natural projection.
The geometry $(\mathcal{G},\omega)$ is called \emph{torsion-free} when the torsion form vanishes,
or equivalently, the curvature form $K$ takes values in $\mathfrak{h}$.

\subsection{Parabolic geometries}

Now let $G$ be a (real or complex) semisimple Lie group, and suppose that its Lie algebra $\mathfrak{g}$ is given
a $\abs{k}$-grading
\begin{equation}
	\label{eq:k-grading}
	\mathfrak{g}=\mathfrak{g}_{-k}\oplus\mathfrak{g}_{-k+1}\oplus\dots\oplus\mathfrak{g}_{k-1}\oplus\mathfrak{g}_k,
\end{equation}
i.e., a direct sum decomposition of $\mathfrak{g}$ such that
$[\mathfrak{g}_i,\mathfrak{g}_j]\subset\mathfrak{g}_{i+j}$ and
$\mathfrak{g}_-=\mathfrak{g}_{-k}\oplus\dots\oplus\mathfrak{g}_{-1}$ is generated by $\mathfrak{g}_{-1}$.
It is known that the subalgebra $\mathfrak{g}_0$ is reductive,
and there exists a unique element $E$ in the center of $\mathfrak{g}_0$ such that $\ad(E)=i\id$ on $\mathfrak{g}_i$
for all $i$, which is called the \emph{grading element} (see \cite{Yamaguchi-93}*{Section 3}).

Associated with the above $\abs{k}$-grading is the filtration
\begin{equation}
	\label{eq:k-grading-filtration}
	\mathfrak{g}=\mathfrak{g}^{-k}\supset\mathfrak{g}^{-k+1}\supset\dots\supset\mathfrak{g}^{k-1}
	\supset\mathfrak{g}^k\supset\set{0},
\end{equation}
where $\mathfrak{g}^i=\bigoplus_{i\le j}\mathfrak{g}_j$.
In particular, $\mathfrak{g}^0$ is also denoted by $\mathfrak{p}$.
A closed subgroup $P$ of $G$ is called a \emph{parabolic subgroup} admitted by the $\abs{k}$-grading
\eqref{eq:k-grading} if its Lie algebra equals $\mathfrak{p}$ and
the adjoint action of any $p\in P$ preserves each $\mathfrak{g}^i$.

\begin{dfn}
	A \emph{parabolic geometry} is a Cartan geometry of type $(G,P)$,
	where $G$ is a semisimple Lie group and $P$ is its parabolic subgroup.
\end{dfn}

The \emph{Levi subgroup} $G_0\subset P$ is defined by
\begin{equation}
	G_0=\set{p\in P|\text{$\Ad(p)(\mathfrak{g}_i)\subset\mathfrak{g}_i$ for all $i$}}.
\end{equation}
In view of the fact that the elements $X\in\mathfrak{g}_0$ are characterized by the property
$\ad(X)(\mathfrak{g}_i)\subset\mathfrak{g}_i$ for all $i$ \cite{Cap-Slovak-09}*{Lemma 3.1.3},
it follows that the Lie algebra of $G_0$ equals $\mathfrak{g}_0$.

Among the $P$-modules $\mathfrak{g}^j$ in the filtration \eqref{eq:k-grading-filtration},
not only $\mathfrak{p}=\mathfrak{g}^0$ but also
$\mathfrak{p}_+=\mathfrak{g}^1$ is particularly important for us, because
there is an isomorphism $\mathfrak{p}_+\cong(\mathfrak{g}/\mathfrak{p})^*$ of $P$-modules
induced by the Killing form of $\mathfrak{g}$.
On the other hand, $\mathfrak{g}/\mathfrak{p}$ is isomorphic to $\mathfrak{g}_-$ as a $G_0$-module,
but $\mathfrak{g}_-$ does not carry a natural $P$-module structure.
It is known that the mapping $G_0\times\mathfrak{p}_+\to P$ defined by $(g_0,Z)\mapsto g_0e^Z$
is a diffeomorphism \cite{Cap-Slovak-09}*{Theorem 3.1.3},
and consequently, if we define $P_+=\exp\mathfrak{p}_+$,
then $P$ is the semidirect product $G_0\ltimes P_+$.

Recall that the curvature function $\kappa$ of a parabolic geometry $(\mathcal{G},\omega)$ takes values in
$\bigwedge^2(\mathfrak{g}/\mathfrak{p})^*\otimes\mathfrak{g}$.
As a $P$-module, $\bigwedge^2(\mathfrak{g}/\mathfrak{p})^*\otimes\mathfrak{g}$
is isomorphic to $\bigwedge^2\mathfrak{p}_+\otimes\mathfrak{g}$,
and this is the chain space $C_2(\mathfrak{p}_+,\mathfrak{g})$ in the Lie algebra homology complex
for $\mathfrak{p}_+$ with values in $\mathfrak{g}$.
Let $\mathbb{W}$ be a $\mathfrak{g}$-module in general
and consider the chain space $C_k(\mathfrak{p}_+,\mathbb{W})=\bigwedge^k\mathfrak{p}_+\otimes\mathbb{W}$.
Then the boundary operator, which is traditionally denoted by $\partial^*$ and called the
\emph{Kostant codifferential}, acting on $C_k(\mathfrak{p}_+,\mathbb{W})$ is defined by
the following formula of the action on decomposable elements, where
$Z_1$, ..., $Z_k\in\mathfrak{p}_+$, $w\in\mathbb{W}$, and $\check{\phantom{a}}$ indicates the omission:
\begin{equation}
	\label{eq:Kostant-codifferential}
	\begin{multlined}
		\partial^*(Z_1\wedge\dots\wedge Z_k\otimes w)
		=\sum_i (-1)^{i+1}Z_1\wedge\dots\wedge\check{Z}_i\wedge\dots\wedge Z_k\otimes(Z_i\cdot w)\\
		+\sum_{i<j} (-1)^{i+j}
		[Z_i,Z_j]\wedge Z_1\wedge\dots\wedge\check{Z}_i\wedge\dots\wedge\check{Z}_j\wedge\dots\wedge Z_k\otimes w.
	\end{multlined}
\end{equation}
Also, by identifying $C_k$ with $\bigwedge^k\mathfrak{g}_-^*\otimes\mathbb{W}$ as a $G_0$-module,
we define $\partial\colon C_k\to C_{k+1}$ by
\begin{equation}
	\label{eq:Lie-algebra-cohomology-coboundary-operator}
	\begin{multlined}
		\partial\varphi(X_1,\dots,X_{k+1})
		=\sum_i (-1)^{i+1}X_i\cdot\varphi(X_1,\dots,\check{X}_i,\dots,X_{k+1})\\
		+\sum_{i<j} (-1)^{i+j}
		\varphi([X_i,X_j],X_1,\dots,\check{X}_i,\dots,\check{X}_j,\dots,X_{k+1}),
	\end{multlined}
\end{equation}
where $X_0$, ..., $X_k\in\mathfrak{g}_-$.
The Lie algebra homology $H_k(\mathfrak{p}_+,\mathbb{W})$ and cohomology $H^k(\mathfrak{g}_-,\mathbb{W})$
are defined by
\begin{equation}
	H_k(\mathfrak{p}_+,\mathbb{W})
	:=\frac{\ker(\partial^*\colon C_k\to C_{k-1})}{\im(\partial^*\colon C_{k+1}\to C_k)}\qquad\text{and}\qquad
	H^k(\mathfrak{g}_-,\mathbb{W})
	:=\frac{\ker(\partial\colon C_k\to C_{k+1})}{\im(\partial\colon C_{k-1}\to C_k)},
\end{equation}
where we understand that $C_{-1}=0$ as usual.

\begin{dfn}
	A parabolic geometry $(\mathcal{G},\omega)$ is said to be \emph{normal} if its curvature function satisfies
	$\partial^*\kappa=0$.
\end{dfn}

When $\mathbb{W}=\mathfrak{g}$,
the chain space $C_k(\mathfrak{p}_+,\mathfrak{g})$ carries a natural $G_0$-invariant grading,
with respect to which $C_k(\mathfrak{p}_+,\mathfrak{g})_l$ is the set of elements in
$C_k(\mathfrak{p}_+,\mathfrak{g})=\bigwedge^k\mathfrak{p}_+\otimes\mathfrak{g}$
that are expressible as a sum of elements in
$\mathfrak{g}_{i_1}\wedge\dots\wedge\mathfrak{g}_{i_k}\otimes\mathfrak{g}_j$ with $i_1+\dots+i_k+j=l$;
elements of $C_k(\mathfrak{p}_+,\mathfrak{g})_l$ are said to be of \emph{homogeneity $l$}.
The associated $P$-invariant filtration is defined by setting
$C_k(\mathfrak{p}_+,\mathfrak{g})^l=\bigoplus_{l\le m}C_k(\mathfrak{p}_+,\mathfrak{g})_m$.

Similarly, any $\mathfrak{g}$-module $\mathbb{W}$ admits a $\mathfrak{g}_0$-invariant eigendecomposition
with respect to the action of the grading element $E$ corresponding to real eigenvalues,
and hence the set of homogeneity $l$ elements and
the $\mathfrak{p}$-invariant filtration of $C_k(\mathfrak{p}_+,\mathbb{W})$ are naturally defined.
These are further inherited by the homology $H_k(\mathfrak{p}_+,\mathbb{W})$ because
\eqref{eq:Kostant-codifferential} shows that $\partial^*$ preserves the homogeneity degree.
Also, the notion of homogeneity $l$ elements of $H^k(\mathfrak{g}_-,\mathbb{W})$ makes sense
because \eqref{eq:Lie-algebra-cohomology-coboundary-operator} implies that $\partial$ preserve
the homogeneity degree.

\subsection{Tractor bundles and connections}
\label{subsec:tractor-bundles-and-connections}

Let $(\mathcal{G},\omega)$ be a Cartan geometry of type $(G,H)$.
As $\mathcal{G}\to M$ is a principal $H$-bundle, any $H$-module gives rise to the associated vector bundle
over the space $M$.
If $\mathbb{W}$ is moreover a $G$-module,
then the associated vector bundle $\mathcal{W}=\mathcal{G}\times_H\mathbb{W}$, which is called
the \emph{tractor bundle}, naturally carries a linear connection.

We describe this in a slightly generalized situation.
Suppose that $\mathbb{W}$ is a \emph{$(\mathfrak{g},H)$-module}, i.e.,
that $\mathbb{W}$ is a $\mathfrak{g}$-module
together with an action of $H$ whose induced action of $\mathfrak{h}$ equals
the restriction of the action of $\mathfrak{g}$.
Then we can still define the associated tractor bundle by $\mathcal{W}=\mathcal{G}\times_H\mathbb{W}$.
Recall that sections of $\mathcal{W}$ are identified with
elements of $\Gamma(\mathcal{G},\mathbb{W})^H$, i.e., $\mathbb{W}$-valued functions on $\mathcal{G}$
that are $H$-equivariant.
Then a natural linear connection of $\mathcal{W}$ can be defined as follows.

\begin{dfn}
	(1) For any $H$-module $\mathbb{W}$, the \emph{fundamental derivative} on
	$\mathcal{W}=\mathcal{G}\times_H\mathbb{W}$ is defined by
	$D_\xi s=\omega^{-1}(\xi)s$ for any section $s$ of $\mathcal{W}$ and $\xi\in\mathfrak{g}$.

	(2) For any $(\mathfrak{g},H)$-module $\mathbb{W}$, we define
	$\nabla_\xi s=D_\xi s+\xi\cdot s$ for any section $s$ of
	the associated tractor bundle $\mathcal{W}=\mathcal{G}\times_H\mathbb{W}$ and $\xi\in\mathfrak{g}$.
	Then $\nabla s$ gives an element of
	$\Gamma(\mathcal{G},(\mathfrak{g}/\mathfrak{h})^*\otimes\mathbb{W})^H$, and thus $\nabla$ can be
	understood as a linear connection of $\mathcal{W}$, which is called the \emph{tractor connection}.
\end{dfn}

Now consider a parabolic geometry $(\mathcal{G},\omega)$ of type $(G,P)$,
with associated $\abs{k}$-grading \eqref{eq:k-grading} of $\mathfrak{g}$.
Then any $(\mathfrak{g},P)$-module $\mathbb{W}$ carries, as discussed in the previous subsection,
a natural $\mathfrak{g}_0$-invariant eigendecomposition with respect to the grading element $E$,
which is actually $G_0$-invariant because $E$ is fixed by the adjoint action of any element of $G_0$.
As a consequence, a $P$-invariant filtration of $C_k(\mathfrak{p}_+,\mathbb{W})$ is induced,
and the bundle $\Omega^k\otimes\mathcal{W}$ of $k$-forms with values in the tractor bundle $\mathcal{W}$
inherits a filtration.

Then it follows that $\nabla$ is \emph{of homogeneity $0$} in the sense that
sections of $\mathcal{W}^l$ are mapped by $\nabla$ to sections of $(\Omega^1\otimes\mathcal{W})^l$.
Passing to the associated graded bundles,
the induced mapping $\gr\nabla\colon\Gamma(\gr_l(\mathcal{W}))\to\Gamma(\gr_l(\Omega^1\otimes\mathcal{W}))$
is in fact algebraic, and equals $\gr\partial$.
This is also the case for differential forms of higher degree
if we assume that $(\mathcal{G},\omega)$ is a regular parabolic geometry (see the next subsection):
the covariant exterior derivative $d^\nabla$ maps sections of $(\Omega^k\otimes\mathcal{W})^l$
to sections of $(\Omega^{k+1}\otimes\mathcal{W})^l$, and $\gr d^\nabla=\gr\partial$.

A case of particular interest is when $\mathbb{W}=\mathfrak{g}$.
In this case, the associated tractor bundle, which we write $\mathcal{A}M$,
is called the \emph{adjoint tractor bundle} and plays an important role.
Note that there is a canonical projection $\Pi\colon\mathcal{A}M\to TM$ induced by
the quotient map $\mathfrak{g}\to\mathfrak{g}/\mathfrak{p}$.

The curvature function $\kappa$ of a parabolic geometry $(\mathcal{G},\omega)$, which takes values in
$\bigwedge^2(\mathfrak{g}/\mathfrak{p})^*\otimes\mathfrak{g}$, is clearly $P$-equivariant,
so it can be interpreted as a 2-form on $M$ with values in $\mathcal{A}M$.
We can use this fact to introduce another linear connection $\tilde{\nabla}$ of $\mathcal{A}M$ by
\begin{equation}
	\label{eq:modified-tractor-connection}
	\tilde{\nabla}s=\nabla s+\iota_{\Pi(s)}\kappa,
\end{equation}
where $s$ is a section of $\mathcal{A}M$ and $\Pi(s)$ denotes, by abuse of notation, the vector field $\Pi\circ s$.

\begin{dfn}
	We call $\tilde{\nabla}$ defined by \eqref{eq:modified-tractor-connection}
	the \emph{modified adjoint tractor connection}.
\end{dfn}

\v{C}ap \cite{Cap-08} introduced this connection in relation with deformations of parabolic geometries.
In fact, trivial infinitesimal deformations of a parabolic geometry $(\mathcal{G},\omega)$
(or of any Cartan geometry) are given in the form $\tilde{\nabla}s$,
as discussed in \cite{Cap-08}*{Sections 3.1 and 3.2} and briefly summarized as follows.
A section $s$ of the adjoint tractor bundle $\mathcal{A}M$
corresponds to a $P$-invariant vector field $\omega^{-1}(s)$ on $\mathcal{G}$, or in other words,
an infinitesimal principal bundle automorphism of $\mathcal{G}$.
The induced infinitesimal change $\mathcal{L}_{\omega^{-1}(s)}\omega$ of the Cartan connection
is a horizontal $P$-equivariant $\mathfrak{g}$-valued 1-form on $\mathcal{G}$ and hence
is an element of $\Omega^1(M,\mathcal{A}M)$, and actually it is given by $\tilde{\nabla}s$.
In addition, it is also known by \cite{Cap-08} that the
covariant exterior differentiation $d^{\tilde{\nabla}}\colon\Omega^1(M,\mathcal{A}M)\to\Omega^2(M,\mathcal{A}M)$
assigns to an infinitesimal modification of the Cartan connection
the induced infinitesimal change of the curvature.

\subsection{Infinitesimal flag structures and regularity}
\label{subsec:infinitesimal-flag-structures}

For a parabolic geometry $(\mathcal{G},\omega)$ of type $(G,P)$, with Levi subgroup $G_0$ of $P$,
we define the principal $G_0$-bundle $\mathcal{G}_0\to M$ by $\mathcal{G}_0=\mathcal{G}/P_+$,
which we call the \emph{graded frame bundle} of $(\mathcal{G},\omega)$.

Note that, since $\omega_u$ defines a linear isomorphism $T_u\mathcal{G}\to\mathfrak{g}$ for each $u\in\mathcal{G}$,
the filtration \eqref{eq:k-grading-filtration} of $\mathfrak{g}$ induces the filtration
\begin{equation}
	\label{eq:filtration-of-tangent-bundle-of-Cartan-geometry}
	T\mathcal{G}=T^{-k}\mathcal{G}\supset T^{-k+1}\mathcal{G}\supset\dots\supset T^0\mathcal{G}
	=V\mathcal{G},
\end{equation}
where $T^i_u\mathcal{G}$ is the preimage of $\mathfrak{g}^i$ by $\omega_u$ and
$V\mathcal{G}$ is the vertical bundle for the projection $\mathcal{G}\to M$.
Then \eqref{eq:filtration-of-tangent-bundle-of-Cartan-geometry} naturally induces the filtration
\begin{equation}
	\label{eq:filtration-of-frame-bundle}
	T\mathcal{G}_0=T^{-k}\mathcal{G}_0\supset T^{-k+1}\mathcal{G}_0\supset\dots\supset T^0\mathcal{G}_0
	=V\mathcal{G}_0
\end{equation}
of $T\mathcal{G}_0$, and furthermore, the one
\begin{equation}
	\label{eq:filtration-of-tangent-bundle}
	TM=T^{-k}M\supset T^{-k+1}M\supset\dots\supset T^0M=0
\end{equation}
of the tangent bundle $TM$ of the base manifold $M$.

One can show that, for $-k\le i\le -1$, the $\mathfrak{g}_i$-component $\omega_i$ of the Cartan connection $\omega$
restricted to $T^i\mathcal{G}$
descends to a $G_0$-equivariant section of $(T^i\mathcal{G}_0)^*\otimes\mathfrak{g}_i$
over $\mathcal{G}_0$ \cite{Cap-Slovak-09}*{Proposition 3.1.5}, which we write $\theta_i$.
Thus $\mathcal{G}_0$ comes with a collection $(\theta_{-k},\dots,\theta_{-1})$ of partially defined
$\mathfrak{g}_i$-valued 1-forms with each $\theta_i$ satisfying $\ker\theta_i=T^{i+1}\mathcal{G}_0$.
The triple $((T^iM)_{i=-k}^{-1},\mathcal{G}_0,(\theta_{-k},\dots,\theta_{-1}))$ is
called the \emph{induced infinitesimal flag structure} of a parabolic geometry $(\mathcal{G},\omega)$.
Abstractly, infinitesimal flag structures are defined as follows.

\begin{dfn}[\cite{Cap-Slovak-09}*{Definition 3.1.6}]
	\label{dfn:infinitesimal-flag-structure}
	An \emph{infinitesimal flag structure} of type $(G,P)$ on a smooth manifold $M$
	is a triple $((T^iM)_{i=-k}^{-1},\mathcal{G}_0,(\theta_{-k},\dots,\theta_{-1}))$, where:
	\begin{enumerate}[(i)]
		\item
			\label{item:infinitesimal-flag-structure-filtration-of-tangent-bundle}
			$(T^iM)_{i=-k}^{-1}$ denotes a filtration
			$TM=T^{-k}M\supset T^{-k+1}M\supset\dots\supset T^{-1}M$
			satisfying $\rank T^iM=\dim(\mathfrak{g}^i/\mathfrak{p})$;
		\item
			$\mathcal{G}_0$ is a principal $G_0$-bundle over $M$;
		\item
			$\theta_i$ is a $G_0$-equivariant section of $(T^i\mathcal{G}_0)^*\otimes\mathfrak{g}_i$,
			where $T^i\mathcal{G}_0$ is the preimage of $T^iM$, such that $\ker\theta_i=T^{i+1}\mathcal{G}_0$.
	\end{enumerate}
\end{dfn}

There is a reinterpretation of infinitesimal flag structures in terms of
a reduction of the structure group of the full graded frame bundle $\mathcal{F}_{\gr}M$
of the graded tangent bundle $\gr(TM)$.
Note that the adjoint action of $G_0$ preserves the grading \eqref{eq:k-grading} and thus
defines a group homomorphism $\Ad\colon G_0\to\GL_{\gr}(\mathfrak{g}_-)$, with respect to which
we reduce the structure group\footnote{The homomorphism $\Ad\colon G_0\to\GL_{\gr}(\mathfrak{g}_-)$
need not be injective.}.

\begin{prop}[\cite{Cap-Slovak-09}*{Proposition 3.1.6}]
	\label{prop:infinitesimal-flag-structure-as-structure-group-reduction}
	An infinitesimal flag structure of type $(G,P)$ on a manifold $M$ is equivalent to the pair of
	a filtration $(T^iM)_{i=-k}^{-1}$ of $TM$
	as in Definition \ref{dfn:infinitesimal-flag-structure}
	\ref{item:infinitesimal-flag-structure-filtration-of-tangent-bundle}
	and a reduction of the structure group of $\mathcal{F}_{\gr}M$
	to $G_0$ with respect to the homomorphism $\Ad\colon G_0\to\GL_{\gr}(\mathfrak{g}_-)$.
\end{prop}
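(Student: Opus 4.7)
The plan is to prove the equivalence by constructing, for a fixed filtration $(T^iM)_{i=-k}^{-1}$ of $TM$, mutually inverse passages between (A) the data $(\mathcal{G}_0,(\theta_{-k},\dots,\theta_{-1}))$ completing an infinitesimal flag structure and (B) a $G_0$-reduction of $\mathcal{F}_{\gr}M$ along $\Ad\colon G_0\to\GL_{\gr}(\mathfrak{g}_-)$. The bridging observation, used in both directions, is that on any principal $G_0$-bundle $\mathcal{G}_0\to M$ whose tangent bundle carries the filtration $T^i\mathcal{G}_0=\pi_*^{-1}(T^iM)$ (for $i\le -1$), the projection $\pi_*$ induces canonical linear isomorphisms $T^i_u\mathcal{G}_0/T^{i+1}_u\mathcal{G}_0\xrightarrow{\sim}\gr_i(T_pM)$, because the vertical bundle $V\mathcal{G}_0=T^0\mathcal{G}_0$ is already contained in every $T^{i+1}\mathcal{G}_0$ with $i\le-1$.

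Going from (A) to (B), I would fix $u\in\mathcal{G}_0$ above $p\in M$ and use the kernel condition $\ker\theta_i=T^{i+1}\mathcal{G}_0$ to factor $\theta_i|_u$ through a linear isomorphism $T^i_u\mathcal{G}_0/T^{i+1}_u\mathcal{G}_0\xrightarrow{\sim}\mathfrak{g}_i$, surjectivity being forced by the rank condition in Definition \ref{dfn:infinitesimal-flag-structure}\ref{item:infinitesimal-flag-structure-filtration-of-tangent-bundle}. Composing with the bridging identification yields an isomorphism $\mathfrak{g}_i\xrightarrow{\sim}\gr_i(T_pM)$, and assembling over $i$ defines a graded frame $\phi_u\in\mathcal{F}_{\gr,p}M$. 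The $G_0$-equivariance of the forms $\theta_i$, combined with the definition of $\Ad$, makes $u\mapsto\phi_u$ a principal bundle morphism $\mathcal{G}_0\to\mathcal{F}_{\gr}M$ over $\Ad$, which is precisely the data of a reduction of structure group.

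Conversely, for (B) to (A), given $\iota\colon\mathcal{G}_0\to\mathcal{F}_{\gr}M$ I would define $T^i\mathcal{G}_0$ as the preimage of $T^iM$ under the projection, and then, regarding each $u\in\mathcal{G}_0$ via $\iota$ as a graded isomorphism $\mathfrak{g}_-\xrightarrow{\sim}\gr T_pM$, set
\begin{equation}
\theta_i(\xi)=\iota(u)^{-1}\bigl(\pi_*\xi\bmod T^{i+1}_pM\bigr)\in\mathfrak{g}_i\qquad\text{for }\xi\in T^i_u\mathcal{G}_0.
\end{equation}
Smoothness and $\mathfrak{g}_i$-valuedness are built in, $\ker\theta_i=T^{i+1}\mathcal{G}_0$ is tautological, and $G_0$-equivariance follows from the $\Ad$-equivariance of $\iota$ combined with the way $\Ad$ transfers the graded action between $\mathfrak{g}_-$ and $\gr T_pM$. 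That the two passages are mutually inverse follows by direct inspection, since each step of one construction is the literal inversion of the corresponding step of the other.

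I expect the main obstacle to be conceptual rather than computational: the argument requires simultaneously keeping track of three compatible filtrations (on $\mathfrak{g}$, on $TM$, and on $T\mathcal{G}_0$) and of two $G_0$-actions (on $\mathfrak{g}_i$ and on $\gr_i(TM)$) and checking they are intertwined by $\Ad$. A minor but genuine subtlety is that $\Ad\colon G_0\to\GL_{\gr}(\mathfrak{g}_-)$ need not be injective, as noted in the footnote, so ``reduction of structure group'' must here be understood in the general sense of a principal bundle morphism along $\Ad$ rather than as the inclusion of a subbundle; once this is granted, the construction above still produces a bijection without additional work.
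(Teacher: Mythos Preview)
Your argument is correct and is essentially the standard one. The paper does not give its own proof of this proposition but simply cites \cite{Cap-Slovak-09}*{Proposition 3.1.6}, so there is nothing to compare against; your construction is the one found in that reference.
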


The notion of infinitesimal flag structures should be supplemented by an important notion of regularity.
An infinitesimal flag structure $((T^iM)_{i=-k}^{-1},\mathcal{G}_0,(\theta_{-k},\dots,\theta_{-1}))$
is said to be \emph{regular} if the filtration $(T^iM)_{i=-k}^{-1}$ is compatible with the Lie bracket
and the induced bundle map
\begin{equation}
	\gr_i(TM)\times\gr_j(TM)\to\gr_{i+j}(TM)
\end{equation}
coincides with the map induced by the Lie algebra bracket $\mathfrak{g}_i\times\mathfrak{g}_j\to\mathfrak{g}_{i+j}$
(see \cite{Cap-Slovak-09}*{Section 3.1.7} for details).
A parabolic geometry is called \emph{regular} if it induces a regular infinitesimal flag structure.

The following fact is crucial when we consider the BGG construction for
the modified adjoint tractor connection in the next subsection.

\begin{prop}[\cite{Cap-Slovak-09}*{Corollary 3.1.8 (2)}]
	\label{prop:characterization-of-regularity-by-curvature}
	A parabolic geometry $(\mathcal{G},\omega)$ is regular if and only if
	its curvature function $\kappa$ is of homogeneity $1$, i.e.,
	seen as the $\mathcal{A}M$-valued 2-form, $\kappa$ is a section of $(\Omega^2\otimes\mathcal{A}M)^1$.
\end{prop}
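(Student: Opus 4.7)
The plan is to prove both directions simultaneously by translating each into the same pointwise condition on the grading components of $\kappa$. The main tool is the identity
\begin{equation*}
	\omega([\tilde X, \tilde Y]) = [X, Y] - \kappa(X, Y),
\end{equation*}
valid for $X, Y \in \mathfrak{g}$ and the $\omega$-constant vector fields $\tilde X, \tilde Y$ on $\mathcal{G}$ determined by $\omega(\tilde X) \equiv X$, $\omega(\tilde Y) \equiv Y$. This follows by expanding $K = d\omega + \tfrac{1}{2}[\omega \wedge \omega]$ and using $d\omega(\tilde X, \tilde Y) = -\omega([\tilde X, \tilde Y])$, the latter since $\omega(\tilde X), \omega(\tilde Y)$ are constant functions on $\mathcal{G}$.

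First I would show that regularity is equivalent to the pointwise vanishing of the component $\kappa_l(X, Y) \in \mathfrak{g}_l$ for every $X \in \mathfrak{g}_i$, $Y \in \mathfrak{g}_j$ with $i, j \leq -1$ and every $l \leq i + j$. Indeed, pushing $\tilde X$ and $\tilde Y$ down to $M$ via a local section $\sigma : U \to \mathcal{G}$ with $\sigma(x_0) = u_0$ produces local vector fields $\bar X, \bar Y$ which take values in $T^iM, T^jM$ respectively, since $\omega(\tilde X) \in \mathfrak{g}^i$ everywhere. Reading the identity modulo $\mathfrak{p}$, the bracket $[\bar X, \bar Y]_{x_0}$ is identified with the $\mathfrak{g}_-$ part of $[X, Y] - \kappa(u_0)(X, Y)$ under $T_{x_0}M \cong \mathfrak{g}/\mathfrak{p}$. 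Since $[X, Y] \in \mathfrak{g}_{i+j}$, it follows that $[\bar X, \bar Y]_{x_0} \in T^{i+j}_{x_0}M$ if and only if $\kappa_l(X, Y) = 0$ for all $l < i + j$, and then the resulting class in $\gr_{i+j}(T_{x_0}M) \cong \mathfrak{g}_{i+j}$ equals $[X, Y]$ if and only if additionally $\kappa_{i+j}(X, Y) = 0$.

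Second, I would recognize this vanishing as the statement that $\kappa$ is of homogeneity $\geq 1$. Under the Killing-form identification $(\mathfrak{g}/\mathfrak{p})^* \cong \mathfrak{p}_+$ used to view $\kappa \in \bigwedge^2 \mathfrak{p}_+ \otimes \mathfrak{g}$, the summand corresponding to $(X, Y) \in \mathfrak{g}_i \times \mathfrak{g}_j \mapsto \kappa_l(X, Y) \in \mathfrak{g}_l$ sits inside $\mathfrak{g}_{-i} \wedge \mathfrak{g}_{-j} \otimes \mathfrak{g}_l$, whose homogeneity is $(-i) + (-j) + l = l - i - j$. Thus ``homogeneity $\geq 1$'' is precisely the condition $\kappa_l(X, Y) = 0$ whenever $l - i - j \leq 0$, i.e., $l \leq i + j$, matching the condition derived in the first step.

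The main obstacle is the technical content of the first step: verifying that the projection $\bar X = \pi_*(\tilde X \circ \sigma)$, although dependent on $\sigma$, produces a bracket whose class on $\gr(TM)$ is independent of the choice of $\sigma$ and is correctly captured by $[X, Y] - \kappa_{i+j}(X, Y)$. This amounts to checking that replacing the lift by a vertical perturbation affects $\omega([\cdot, \cdot])$ only in $\mathfrak{p}$ and in grading components of homogeneity $\geq i + j + 1$, which follows from $[\mathfrak{g}_i, \mathfrak{p}] \subset \mathfrak{g}^i$ together with the grading.
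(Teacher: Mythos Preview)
The paper does not give its own proof of this proposition; it simply cites \v{C}ap--Slov\'ak \cite{Cap-Slovak-09}*{Corollary 3.1.8 (2)} as an external result. Your argument is essentially the standard proof found there: the key identity $\omega([\tilde X,\tilde Y])=[X,Y]-\kappa(X,Y)$ for $\omega$-constant vector fields, followed by the translation of ``$\kappa$ has homogeneity $\ge 1$'' into the componentwise vanishing $\kappa_l(X,Y)=0$ for $X\in\mathfrak{g}_i$, $Y\in\mathfrak{g}_j$, $l\le i+j$, is exactly how the reference proceeds.

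Your outline is correct. The one point that deserves a cleaner treatment than you give it is the passage from $[\tilde X,\tilde Y]$ on $\mathcal{G}$ to the Lie bracket of vector fields on $M$. Using a local section $\sigma$ and setting $\bar X=\pi_*(\tilde X\circ\sigma)$ does produce a section of $T^iM$, but $\tilde X$ is not $\pi$-related to $\bar X$ in the usual sense, so the step ``$[\bar X,\bar Y]$ corresponds to $[X,Y]-\kappa(X,Y)$ modulo $\mathfrak{p}$'' needs justification. The cleanest fix is to replace $\tilde X$ by the $P$-invariant lift $\hat\xi$ of a vector field $\xi\in\Gamma(T^iM)$; then $[\hat\xi,\hat\eta]$ genuinely projects to $[\xi,\eta]$, and one computes $\omega([\hat\xi,\hat\eta])$ at a point $u$ using that $\omega(\hat\xi)_u\in\mathfrak{g}^i$ and that $\hat\xi-\tilde{(\omega(\hat\xi)_u)}$ vanishes at $u$, so the curvature term is unchanged while the bracket term lies in $\mathfrak{g}^{i+j}$. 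Your final paragraph correctly identifies this as the main technical point, and your sketch of why vertical perturbations are harmless (via $[\mathfrak{g}_i,\mathfrak{p}]\subset\mathfrak{g}^i$) is the right idea.
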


Giving an infinitesimal flag structure is a universal way of describing 
certain kind of geometry of the base space $M$.
For example, if $G=\PSU(n+1,1)$ and $P$ is defined to be the stabilizer of a null complex line
in the complex Minkowski space $\mathbb{C}^{n+1,1}$, then
infinitesimal flag structures of type $(G,P)$ on a $(2n+1)$-dimensional smooth manifold $M$
are in one-to-one correspondence with compatible almost CR structures on $M$, as discussed in detail in
Section \ref{subsec:frame-bundles-in-CR-geometry}.

The following theorem, which is basically due to works of Tanaka in 1960--70s,
provides a generalization in terms of parabolic geometries
of the construction of normal Cartan connection associated with compatible almost CR structures
carried out by Tanaka \cite{Tanaka-62} and Chern--Moser \cite{Chern-Moser-74} (in the integrable case).

\begin{thm}[cf.\ \v{C}ap--Slov\'ak \cite{Cap-Slovak-09}*{Sections 3.1.13--14}]
	\label{thm:existence-of-normal-cartan-connection}
	Let $((T^iM)_{i=-k}^{-1},\mathcal{G}_0,(\theta_{-k},\dots,\theta_{-1}))$ be
	a regular infinitesimal flag structure of type $(G,P)$,
	and suppose that $H^1(\mathfrak{g}_-,\mathfrak{g})$ is concentrated in non-positive homogeneity degrees
	and $\mathfrak{g}_0$ carries no simple ideals of $\mathfrak{g}$.
	Then, there exists a normal regular parabolic geometry $(\mathcal{G},\omega)$ that induces
	$((T^iM)_{i=-k}^{-1},\mathcal{G}_0,(\theta_{-k},\dots,\theta_{-1}))$, which is unique
	up to the action of principal $P$-bundle isomorphisms inducing the identity on
	the underlying infinitesimal structure.
\end{thm}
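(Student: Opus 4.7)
The plan is to first extend the graded frame bundle $\mathcal{G}_0$ to a principal $P$-bundle $\mathcal{G}$ over $M$, then construct a candidate Cartan connection $\omega^{(0)}$ reproducing the soldering data $(\theta_{-k},\dots,\theta_{-1})$, and finally iteratively modify $\omega^{(0)}$ by forms of strictly increasing homogeneity so as to enforce the normality condition $\partial^*\kappa=0$ one homogeneity degree at a time. Uniqueness will be handled by an entirely analogous iterative argument showing that any two such normal connections differ by a principal $P$-bundle automorphism inducing the identity on $\mathcal{G}_0$.

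For the bundle extension, I would set $\mathcal{G}:=\mathcal{G}_0\times_{G_0}P$ using the inclusion $G_0\hookrightarrow P$; then $\mathcal{G}_0\hookrightarrow\mathcal{G}$ is a reduction to $G_0$ along the $P_+$-orbits in the fibers. A choice of principal $G_0$-connection on $\mathcal{G}_0$, combined with the tautological extensions of $\theta_{-k},\dots,\theta_{-1}$ and the Maurer--Cartan form along the $P_+$-direction, produces a $\mathfrak{g}$-valued 1-form $\omega^{(0)}$ on $\mathcal{G}$ that is $P$-equivariant, induces the prescribed infinitesimal flag structure on $M$, and defines pointwise linear isomorphisms $T_u\mathcal{G}\to\mathfrak{g}$; in other words, a Cartan connection. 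By regularity of the underlying structure and Proposition \ref{prop:characterization-of-regularity-by-curvature}, the curvature function $\kappa^{(0)}$ lies in homogeneity degree $\ge 1$.

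The heart of the argument is the iterative normalization. Assume inductively that $\omega^{(\ell)}$ has been arranged so that $\partial^*\kappa^{(\ell)}$ vanishes in homogeneities $\le\ell$. Any $P$-equivariant horizontal $\mathfrak{p}$-valued 1-form $\phi$ of homogeneity exactly $\ell+1$ yields a new Cartan connection $\omega^{(\ell+1)}=\omega^{(\ell)}+\phi$ whose curvature differs from $\kappa^{(\ell)}$ at leading homogeneity by $\partial\phi$, the Lie algebra cohomology differential $\partial\colon C_1(\mathfrak{g}_-,\mathfrak{g})\to C_2(\mathfrak{g}_-,\mathfrak{g})$ of \eqref{eq:Lie-algebra-cohomology-coboundary-operator}. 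Using Kostant's algebraic Hodge decomposition of $C_\bullet(\mathfrak{p}_+,\mathfrak{g})$ -- available because $\mathfrak{g}$ is semisimple and the Killing form identifies $(\mathfrak{g}_-)^*\cong\mathfrak{p}_+$ as $G_0$-modules -- and invoking the hypothesis that $H^1(\mathfrak{g}_-,\mathfrak{g})$ is concentrated in non-positive homogeneities, the relevant component of $\kappa^{(\ell)}$ at homogeneity $\ell+1$ splits into an image-of-$\partial$ piece, which can be killed by a suitable $\phi$, and a harmonic piece that vanishes by hypothesis. Since the homogeneity grading on $C_2(\mathfrak{p}_+,\mathfrak{g})$ is bounded above (by $3k$), this procedure terminates after finitely many steps, producing the sought normal regular Cartan connection $\omega$.

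For uniqueness, if $\omega$ and $\bar{\omega}$ are two normal regular parabolic geometries on $\mathcal{G}$ inducing the same infinitesimal flag structure, then $\Phi:=\omega-\bar{\omega}$ is a $P$-equivariant horizontal $\mathfrak{p}$-valued 1-form of strictly positive homogeneity. Decomposing $\Phi=\sum\Phi_\ell$ by homogeneity and comparing the two normality conditions degree by degree, the lowest non-zero component satisfies $\partial\Phi_\ell=0$ at leading homogeneity; the hypotheses that $H^1(\mathfrak{g}_-,\mathfrak{g})$ vanishes in positive homogeneity and that $\mathfrak{g}_0$ contains no simple ideal of $\mathfrak{g}$ (the latter ensuring the adjoint action has the minimal stabilizer and hence pins down bundle automorphisms) force $\Phi_\ell$ to arise as the infinitesimal generator of a $P$-bundle automorphism inducing the identity on $\mathcal{G}_0$. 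A finite induction then reduces $\Phi$ to zero after such a gauge transformation. I expect the main obstacle to be the careful bookkeeping of the filtration by homogeneity on the chain complex and the consistent use of Kostant's decomposition to separate the ``solvable'' from the ``harmonic'' parts of the curvature adjustment at each stage, but conceptually the whole argument is driven by the single cohomological hypothesis on $H^1(\mathfrak{g}_-,\mathfrak{g})$.
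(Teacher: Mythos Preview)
The paper does not supply its own proof of this theorem; it is stated with a citation to \v{C}ap--Slov\'ak \cite{Cap-Slovak-09}*{Sections 3.1.13--14} and then simply used as a black box, with only the verification of the hypotheses for CR geometry being carried out elsewhere in the paper (Lemma \ref{lem:g0-contains-no-simple-ideals-of-g} and the homology computation in Section \ref{subsec:homology-with-values-in-adjoint}). So there is nothing in the paper to compare your argument against line by line.

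That said, your sketch of the existence step contains a genuine confusion that you should fix. You write that at homogeneity $\ell+1$ the curvature $\kappa^{(\ell)}$ ``splits into an image-of-$\partial$ piece, which can be killed by a suitable $\phi$, and a harmonic piece that vanishes by hypothesis.'' This misplaces the cohomological hypothesis. The curvature lies in $C_2(\mathfrak{p}_+,\mathfrak{g})$, so its harmonic part represents a class in $H^2(\mathfrak{g}_-,\mathfrak{g})$, not $H^1$; that harmonic part is precisely the essential curvature invariant (the Chern--Moser/Weyl-type tensor) and does \emph{not} vanish in general. The normalization target is $\partial^*\kappa=0$, not $\kappa=0$. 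The correct inductive step is: modifying by $\phi\in C_1$ of homogeneity $\ell+1$ changes $\kappa$ at leading order by $\partial\phi$, hence changes $\partial^*\kappa$ by $\partial^*\partial\phi$; since $\partial^*\kappa_{\ell+1}\in\im\partial^*$ and Kostant's decomposition shows $\partial^*\partial$ is an isomorphism on $\im\partial^*$, one can always solve $\partial^*\partial\phi=-\partial^*\kappa_{\ell+1}$. No hypothesis on $H^1$ is needed here.

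The $H^1$ hypothesis enters where you correctly place it in your uniqueness paragraph: if $\omega$ and $\bar\omega$ are both normal and induce the same underlying structure, the lowest homogeneous component $\Phi_\ell$ of their difference satisfies $\partial^*\partial\Phi_\ell=0$, hence $\Phi_\ell\in\ker\partial$; the assumption $H^1(\mathfrak{g}_-,\mathfrak{g})_\ell=0$ for $\ell>0$ then forces $\Phi_\ell\in\im\partial$, i.e.\ $\Phi_\ell$ is the infinitesimal effect of a vertical $P$-bundle automorphism, and one iterates. Your outline of this part is essentially correct; just be sure not to blur the role of the hypothesis into the existence half of the argument.
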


The fact that CR geometry satisfies the assumption of the above theorem is well-known and
checked later in Section \ref{subsec:homology-with-values-in-adjoint}.
In fact, it suffices to compute the Lie algebra homology $H_1(\mathfrak{p}_+,\mathfrak{g})$ to
verify the cohomological assumption, since it is known that $H^1(\mathfrak{g}_-,\mathfrak{g})$ is
isomorphic to $H_1(\mathfrak{p}_+,\mathfrak{g})$ as a $G_0$-module.

\subsection{BGG operators}

As an application of the theory of parabolic geometries,
we can construct certain invariant linear differential operators in a systematic way.
An implementation of such techniques is given by the ``curved'' BGG construction
of \v{C}ap--Slov\'ak--Sou\v{c}ek \cite{Cap-Slovak-Soucek-01},
which is then simplified by Calderbank--Diemer \cite{Calderbank-Diemer-01}.
It starts with the tractor connection $\nabla$
of any tractor bundle $\mathcal{W}=\mathcal{G}\times_P\mathbb{W}$ of a parabolic geometry $(\mathcal{G},\omega)$
and ends up with a sequence of ``BGG operators'' $D_k$, $k\ge 0$,
which forms a complex when $(\mathcal{G},\omega)$ is locally flat (i.e., locally homogeneous).
In fact, the same construction can also be triggered for some modified connections of $\mathcal{W}$,
including the modified adjoint tractor connection $\tilde{\nabla}$
discussed in Section \ref{subsec:tractor-bundles-and-connections},
as pointed out by \v{C}ap \cite{Cap-08} and sketched below.

Let $(\mathcal{G},\omega)$ be a regular parabolic geometry of type $(G,P)$,
$\mathcal{W}$ the tractor bundle associated with a $(\mathfrak{g},P)$-module $\mathbb{W}$,
and let $\nabla$ be the tractor connection (unmodified, for the time being).
Then the following twisted de Rham sequence can be considered,
where $d^\nabla$ is the covariant exterior differentiation:
\begin{equation}
	\label{eq:twisted-deRham-sequence}
	\begin{tikzcd}
		\mathcal{W}\ar[r,"\nabla"] &
		\Omega^1\otimes\mathcal{W}\ar[r,"d^\nabla"] &
		\Omega^2\otimes\mathcal{W}\ar[r,"d^\nabla"] & \cdots.
	\end{tikzcd}
\end{equation}
The BGG operators are introduced to extend the sequence \eqref{eq:twisted-deRham-sequence}
into a certain commutative diagram.
Recall that the Lie algebra homology $H_k(\mathfrak{p}_+,\mathbb{W})$ is defined by
\begin{equation}
	H_k(\mathfrak{p}_+,\mathbb{W})
	=\frac{\ker(\partial^*\colon C_k\to C_{k-1})}{\im(\partial^*\colon C_{k+1}\to C_k)},
\end{equation}
where $C_k=C_k(\mathfrak{p}_+,\mathbb{W})=\bigwedge^k\mathfrak{p}_+\otimes\mathbb{W}$.
Note that, in addition to the chain spaces $C_k$, we have another two families of $P$-modules,
namely $Z_k=\ker\partial^*\subset C_k$ and $H_k$.
The vector bundles associated with $C_k$ are $\Omega^k\otimes\mathcal{W}$,
and we set $\mathcal{Z}_k:=\mathcal{G}\times_P Z_k$ and $\mathcal{H}_k:=\mathcal{G}\times_P H_k$.
Then, the diagram that extends \eqref{eq:twisted-deRham-sequence} will be
\begin{equation}
	\begin{tikzcd}
		\mathcal{W}\ar[r,"\nabla"] &
		\Omega^1\otimes\mathcal{W}\ar[r,"d^\nabla"] &
		\Omega^2\otimes\mathcal{W}\ar[r,"d^\nabla"] & \cdots \\
		\mathcal{Z}_0\ar[u,equal]\ar[d,"\proj"] &
		\mathcal{Z}_1\ar[u,hookrightarrow]\ar[d,"\proj"] &
		\mathcal{Z}_2\ar[u,hookrightarrow]\ar[d,"\proj"] \\
		\mathcal{H}_0\ar[r,"D_0"] &
		\mathcal{H}_1\ar[r,"D_1"] &
		\mathcal{H}_2\ar[r,"D_2"] & \cdots.
	\end{tikzcd}
\end{equation}

The definition of the \emph{BGG operators} $D_k$ is given by
\begin{equation}
	\label{eq:definition-of-BGG-operators}
	D_k=\proj\circ d^\nabla\circ L_k
\end{equation}
where $L_k\colon\mathcal{H}_k\to\mathcal{Z}_k$ is some differential splitting operator.
In order that \eqref{eq:definition-of-BGG-operators} may make sense, it is necessary that the image of
$d^\nabla\circ L_k$ is contained in $\mathcal{Z}_{k+1}$.
General constructions of such $L_k$ are given by \cite{Cap-Slovak-Soucek-01} and \cite{Calderbank-Diemer-01}.
However, we do not need to use those constructions of $L_k$ because it is known that
the requirement we just mentioned also gives the characterization of $L_k$,
as pointed out by \v{C}ap \cite{Cap-05}.
Hammerl--Somberg--Sou\v{c}ek--\v{S}ilhan \cite{Hammerl-Somberg-Soucek-Silhan-12}*{Theorem 3.1}
summarizes this as follows.

\begin{prop}
	\label{prop:construction-of-BGG-splitting-operator}
	Let $E_k$ be a differential operator from
	$\Omega^k\otimes\mathcal{W}$ to $\Omega^{k+1}\otimes\mathcal{W}$
	of homogeneity $0$
	with the property that the associated graded map coincides with $\gr\partial$.
	Then for every $\sigma\in\mathcal{H}_k$,
	there exists a unique element $s\in\mathcal{Z}_k$ with $\sigma=[s]$ and $E_ks\in\mathcal{Z}_{k+1}$.
	Moreover, the mapping $L_k\colon\sigma\mapsto s$ is given by a differential operator.
\end{prop}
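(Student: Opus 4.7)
The plan is to construct $L_k$ by successive approximation along the homogeneity filtration of $\Omega^k\otimes\mathcal{W}$, using the Kostant Hodge decomposition as the algebraic engine. Two observations set the stage. First, $\partial^*\circ\partial^*=0$, so any section of $\im\partial^*$ lies in $\mathcal{Z}_k=\ker\partial^*$ and adding such a section to a lift of $\sigma$ in $\mathcal{Z}_k$ yields another lift of the same class $\sigma\in\mathcal{H}_k$; conversely, any two such lifts differ by a section of $\im\partial^*$. Second, since $Z_k$ and $H_k$ are completely reducible $G_0$-modules, there exists a $G_0$-equivariant splitting of $Z_k\to H_k$, which transfers to a bundle splitting and provides an initial lift $s_0\in\Gamma(\mathcal{Z}_k)$ of $\sigma$. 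The task is then to find a unique $\eta\in\Gamma(\im\partial^*)$ solving
\begin{equation*}
  \partial^*E_k\eta=-\partial^*E_k s_0,
\end{equation*}
for then $s:=s_0+\eta$ satisfies $\sigma=[s]$ and $E_ks\in\mathcal{Z}_{k+1}$.

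Consider the endomorphism $B:=\partial^*\circ E_k$ of $\Omega^k\otimes\mathcal{W}$. By hypothesis $\gr E_k=\gr\partial$, and since $\partial^*$ is algebraic of homogeneity $0$, we have $\gr B=\partial^*\partial$. On the subbundle $\im\partial^*$, the identity $\partial^*\circ\partial^*=0$ forces $\partial\partial^*$ to vanish, so $\partial^*\partial$ coincides there with the Kostant Laplacian $\Box:=\partial\partial^*+\partial^*\partial$. By Kostant's theorem, $\Box$ is $G_0$-invariantly diagonalizable, and the Hodge-type decomposition $C_k=\ker\Box\oplus\im\partial\oplus\im\partial^*$ gives $\ker\Box\cap\im\partial^*=0$. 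Hence $\gr B$ restricts to an invertible bundle endomorphism of $\im\partial^*$; call its inverse $\Box_0^{-1}$.

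Writing $B=B_0+B_+$ with $B_0$ of homogeneity $0$ (equal to $\Box$ on $\im\partial^*$) and $B_+$ strictly filtration-raising, one obtains the formal Neumann inverse
\begin{equation*}
  B^{-1}=\Box_0^{-1}\sum_{j\ge 0}\bigl(-B_+\Box_0^{-1}\bigr)^j
\end{equation*}
acting on $\Gamma(\im\partial^*)$. Because the homogeneity filtration of $C_k(\mathfrak{p}_+,\mathbb{W})$ has finite length, the sum terminates and defines a genuine differential operator inverse. Setting $\eta:=-B^{-1}(\partial^*E_k s_0)$ yields the unique solution, and $L_k\colon\sigma\mapsto s_0+\eta$ is a differential operator of finite order, being a finite composition of $E_k$ with algebraic bundle maps.

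The main obstacle is keeping the filtration bookkeeping honest: one has to verify that $B_+$ preserves $\im\partial^*$ (or at least that the Neumann iterates do, modulo the filtration) and that the series terminates at a step determined only by the length of the homogeneity filtration, so that $L_k$ has universally bounded order. Both points reduce to the fact that the Kostant codifferential $\partial^*$ carries a definite homogeneity weight and that, at the graded level, $E_k-\partial$ is strictly filtration-raising, a direct consequence of the homogeneity-$0$ assumption on $E_k$ paired with $\gr E_k=\gr\partial$.
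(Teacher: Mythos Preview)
The paper does not supply its own proof of this proposition; it cites it from Hammerl--Somberg--Sou\v{c}ek--\v{S}ilhan \cite{Hammerl-Somberg-Soucek-Silhan-12}*{Theorem 3.1}, with the underlying construction attributed to \cite{Cap-Slovak-Soucek-01} and \cite{Calderbank-Diemer-01} and the characterization to \v{C}ap \cite{Cap-05}. So there is nothing in the present paper to compare against directly.

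Your argument is correct and is essentially the Calderbank--Diemer approach: invert $\partial^*E_k$ on $\im\partial^*$ by a finite Neumann series, using that its graded part $\partial^*\partial$ coincides with the Kostant Laplacian $\Box$ there, which is invertible by the Hodge decomposition. The observations you flag as ``obstacles'' are indeed the right ones and are handled by what you wrote: $B=\partial^*E_k$ automatically lands in $\im\partial^*$ (it ends with $\partial^*$), so $B_+=B-B_0$ does too; and nilpotence of $B_+\Box_0^{-1}$ follows from the finite length of the homogeneity filtration on $C_k(\mathfrak{p}_+,\mathbb{W})$. Uniqueness is exactly the injectivity of $B$ on $\im\partial^*$, which you have. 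One small cosmetic point: rather than invoking complete reducibility of $Z_k$ to get an initial lift, you may as well use the canonical one furnished by the Hodge decomposition itself, namely the isomorphism $\ker\Box\cong H_k$, which already sits inside $\ker\partial^*$; this keeps the whole argument within the Kostant framework you are using anyway.
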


The definition of the BGG operators $D_k$ is completed by applying
Proposition \ref{prop:construction-of-BGG-splitting-operator} to the operator $E_k=d^\nabla$.

It is sometimes the case that
the tractor connection $\nabla$ happens to be a prolongation of the associated first BGG operator $D_0$,
which basically means that the system of differential equations $\nabla s=0$, where $s\in\mathcal{W}$,
can be recovered by
extending the system $D_0f=0$, where $f\in\mathcal{H}_0$, by adjoining its trivial consequences.
In this case, it follows that $\ker D_0$ is in one-to-one correspondence with $\ker\nabla$,
which often provides some geometric insight toward the space under study,
because in many settings $D_0$ admits a down-to-earth geometric interpretation.
Such examples and non-examples are overviewed by
Hammerl--Somberg--Sou\v{c}ek--\v{S}ilhan \cite{Hammerl-Somberg-Soucek-Silhan-12}*{Section 5}.

The same relationship can also happen for other differential operators
$E_0\colon\mathcal{W}\to\Omega^1\otimes\mathcal{W}$
satisfying the assumption of Proposition \ref{prop:construction-of-BGG-splitting-operator}
and its associated first BGG operator.
In particular, the following fact regarding the first BGG operator $D_0^{\tilde{\nabla}}$
associated with the modified adjoint tractor connection $\tilde{\nabla}$,
which is well-defined by Proposition \ref{prop:characterization-of-regularity-by-curvature}
and which we call the \emph{modified first adjoint BGG operator},
is proved in
\cite{Hammerl-Somberg-Soucek-Silhan-12}*{Section 5.4} (or Hammerl \cite{Hammerl-09-Thesis}*{Section 4.3}).

\begin{thm}
	\label{thm:modified-adjoint-tractor-connection-prolongates}
	For any normal regular parabolic geometry $(\mathcal{G},\omega)$,
	the modified adjoint tractor connection $\tilde{\nabla}$ is
	always a prolongation of the associated first BGG operator $D_0^{\tilde{\nabla}}$.
\end{thm}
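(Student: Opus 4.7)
My plan is to establish the bijection between $\ker D_0^{\tilde{\nabla}}$ and $\ker\tilde{\nabla}$ realized via the BGG splitting operator $L_0^{\tilde{\nabla}}\colon\mathcal{H}_0\to\mathcal{Z}_0=\mathcal{A}M$. The forward direction is easy: if $s\in\Gamma(\mathcal{A}M)$ satisfies $\tilde{\nabla}s=0$, then $\tilde{\nabla}s\in\mathcal{Z}_1$ trivially, so the uniqueness clause of Proposition \ref{prop:construction-of-BGG-splitting-operator} applied to $E_0=\tilde{\nabla}$ gives $s=L_0^{\tilde{\nabla}}([s])$, and hence $D_0^{\tilde{\nabla}}[s]=\proj(\tilde{\nabla}s)=0$.

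For the converse, I would take $f\in\ker D_0^{\tilde{\nabla}}$ and set $s=L_0^{\tilde{\nabla}}f$. By construction $\tilde{\nabla}s\in\mathcal{Z}_1$ and its class in $\mathcal{H}_1$ vanishes, so $\tilde{\nabla}s$ is a section of $\partial^\ast(C_2(\mathfrak{p}_+,\mathfrak{g}))\cap Z_1(\mathfrak{p}_+,\mathfrak{g})$. The goal is to promote this to $\tilde{\nabla}s=0$. The crucial input is the geometric interpretation of $\tilde{\nabla}$ described in Section \ref{subsec:tractor-bundles-and-connections}: if $\xi=\omega^{-1}(s)$ denotes the associated $P$-invariant vector field on $\mathcal{G}$, then $\tilde{\nabla}s=\mathcal{L}_\xi\omega$. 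Consequently $\tilde{\nabla}s$ vanishes if and only if $\xi$ is an infinitesimal automorphism of the parabolic geometry $(\mathcal{G},\omega)$.

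My plan for the converse is therefore to invoke uniqueness of the normal Cartan connection. Letting $\Fl_t$ be the flow of $\xi$, each $\omega_t=\Fl_t^\ast\omega$ is another Cartan connection on $\mathcal{G}$, automatically normal and regular since these properties are diffeomorphism-invariant, and it induces on $M$ the infinitesimal flag structure obtained by pulling back the original one through the diffeomorphism of $M$ underlying $\Fl_t$. By the uniqueness clause of Theorem \ref{thm:existence-of-normal-cartan-connection}, to conclude $\dot\omega_0=\tilde{\nabla}s=0$ it suffices to show that this underlying diffeomorphism acts trivially on the infinitesimal flag structure to first order in $t$.

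The main obstacle is making this last reduction precise: one must show that the vanishing of the $\mathcal{H}_1$-class of $\tilde{\nabla}s$ encodes precisely the infinitesimal preservation, to first order, of both the filtration $(T^iM)_{i=-k}^{-1}$ and the $G_0$-reduction $\mathcal{G}_0$. This is a homological computation in $C_1(\mathfrak{p}_+,\mathfrak{g})$ and $C_2(\mathfrak{p}_+,\mathfrak{g})$ that leans on regularity (Proposition \ref{prop:characterization-of-regularity-by-curvature}): regularity ensures $\kappa$ has homogeneity $\ge 1$, so that the correction $\iota_{\Pi(s)}\kappa$ in $\tilde{\nabla}=\nabla+\iota_\Pi\kappa$ contributes only at strictly positive homogeneity and does not spoil the degree-by-degree analysis of $\partial^\ast\tilde{\nabla}s$. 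It is exactly this degree-preserving step — replacing the Bianchi-type identity that would be needed for the unmodified $\nabla$ — which isolates $\tilde{\nabla}$ as the correct object for the prolongation property, and which forms the technical heart of the argument in \cite{Hammerl-Somberg-Soucek-Silhan-12}.
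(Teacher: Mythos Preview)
The paper does not prove this theorem; it simply cites \cite{Hammerl-Somberg-Soucek-Silhan-12}*{Section 5.4} and \cite{Hammerl-09-Thesis}*{Section 4.3}. So the comparison is with the argument in those references, which is \emph{not} the route you take.

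Your forward direction is fine. For the converse, however, invoking the finite uniqueness clause of Theorem~\ref{thm:existence-of-normal-cartan-connection} does not close the argument. That uniqueness is only up to principal $P$-bundle automorphisms covering the identity on the underlying infinitesimal flag structure. Consequently, even if the flow of $\omega^{-1}(s)$ preserves the flag structure to first order, all you get is that $\dot\omega_0=\tilde{\nabla}s$ agrees with $\tilde{\nabla}s'$ for some section $s'$ of $\mathcal{G}\times_P\mathfrak{p}$ arising from an infinitesimal \emph{vertical} automorphism; you do not get $\tilde{\nabla}s=0$. Extracting the needed infinitesimal uniqueness statement from the finite one amounts to producing a canonical smooth lift of a one-parameter family of flag automorphisms to Cartan automorphisms --- which is exactly the prolongation property you are trying to prove. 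So the argument, as written, is circular at this step.

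The proof in the cited references is purely algebraic and avoids this trap. The missing idea is the second fact recorded in Section~\ref{subsec:tractor-bundles-and-connections} (from \v{C}ap \cite{Cap-08}): $d^{\tilde{\nabla}}\tilde{\nabla}s$ equals the Lie derivative $\mathcal{L}_{\omega^{-1}(s)}\kappa$ of the curvature. Since $\partial^*$ is $P$-equivariant and $\partial^*\kappa=0$ by normality, one has $\partial^*\bigl(d^{\tilde{\nabla}}\tilde{\nabla}s\bigr)=\mathcal{L}_{\omega^{-1}(s)}(\partial^*\kappa)=0$, i.e.\ $d^{\tilde{\nabla}}\tilde{\nabla}s\in\mathcal{Z}_2$ for \emph{every} $s$. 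Now set $s=L_0^{\tilde{\nabla}}f$ with $D_0^{\tilde{\nabla}}f=0$: then $\tilde{\nabla}s\in\mathcal{Z}_1$, its class in $\mathcal{H}_1$ is zero, and $d^{\tilde{\nabla}}(\tilde{\nabla}s)\in\mathcal{Z}_2$. Applying the uniqueness clause of Proposition~\ref{prop:construction-of-BGG-splitting-operator} at level $k=1$ (with $E_1=d^{\tilde{\nabla}}$) gives $\tilde{\nabla}s=L_1^{\tilde{\nabla}}(0)=0$. This is where normality and the specific modification by $\iota_{\Pi(\cdot)}\kappa$ are really used --- not in a ``degree-preserving'' bookkeeping step, but in forcing $d^{\tilde{\nabla}}\tilde{\nabla}s$ into $\ker\partial^*$.
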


Now let us turn to CR geometry, and recall from Section \ref{sec:CR-Killing-operator}
that the CR Killing operator
$D\colon\Re\mathcal{E}(1,1)\to\Re(\mathcal{E}_{(\alpha\beta)}(1,1)\oplus
\mathcal{E}_{({\conj{\alpha}}{\conj{\beta}})}(1,1))$ is
an operator describing trivial deformations of compatible almost CR structures
induced by contact Hamiltonian vector fields.
In particular, $\ker D$ is the space of infinitesimal symmetries.
On the other hand, Theorem \ref{thm:modified-adjoint-tractor-connection-prolongates} implies
that the modified first adjoint BGG operator $D_0^{\tilde{\nabla}}$ associated with the
CR normal regular Cartan geometry $(\mathcal{G},\omega)$ shares the same property that
$\ker D_0^{\tilde{\nabla}}$ is the space of infinitesimal symmetries---although the precise meaning of symmetries is
different, namely, given in terms of infinitesimal principal bundle automorphisms of $\mathcal{G}$.
But in any case, this observation strongly suggests that $D_0^{\tilde{\nabla}}$ is equal to $D$
(perhaps up to some nonzero constant factor),
which is indeed the case as we shall see in Section \ref{sec:computation-of-CR-BGG-operators}
(without any factor, thanks to our normalization used in \eqref{eq:CR-Killing-operator}).

Another point of interest is whether $D_0^{\tilde{\nabla}}$ is actually different from
the original (i.e., unmodified) first adjoint BGG operator $D_0^\nabla$.
The following theorem regarding this matter is due to \v{C}ap \cite{Cap-08}*{3.5 Theorem}.

\begin{thm}
	\label{thm:first-adjoint-BGG-operators-agree}
	If $(\mathcal{G},\omega)$ is a torsion-free normal regular parabolic geometry
	and the homology $H_1(\mathfrak{p}_+,\mathfrak{g})$ is concentrated in non-positive homogeneity degrees,
	then $D_0^{\tilde{\nabla}}=D_0^\nabla$.
\end{thm}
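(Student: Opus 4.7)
The plan is to exploit the homogeneity hypothesis on $H_1(\mathfrak{p}_+,\mathfrak{g})$ together with torsion-freeness to show that the perturbation $\iota_\Pi\kappa$ distinguishing $\tilde{\nabla}$ from $\nabla$ does not contribute to the first BGG operator. First I would apply Proposition~\ref{prop:construction-of-BGG-splitting-operator} to both $\nabla$ and $\tilde{\nabla}$: since regularity forces $\kappa$ to be of homogeneity $\geq 1$, the zeroth-order perturbation $\iota_\Pi\kappa$ is a differential operator of positive homogeneity, so $\tilde{\nabla}$ has the same graded symbol $\gr\partial$ as $\nabla$. This yields splitting operators $L_0^\nabla$ and $L_0^{\tilde{\nabla}}$ from $\mathcal{H}_0$ to $\mathcal{A}M$.

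The key homogeneity estimate comes from torsion-freeness, which forces $\kappa$ to take values in $\mathfrak{p}=\mathfrak{g}^0$. Consequently, for any section $s$ of $\mathcal{A}M$, the contraction $\iota_{\Pi(s)}\kappa$ is a section of $T^*M\otimes(\mathcal{A}M)^0$. Because $T^*M$, viewed through its identification with $\mathfrak{p}_+\subset\mathfrak{g}$, satisfies $T^*M=(T^*M)^1$, the tensor product sits inside $(\Omega^1\otimes\mathcal{A}M)^1$. Thus $\iota_\Pi\kappa$ always produces a section of homogeneity $\geq 1$, irrespective of the input $s$.

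To compare the two splittings, I would write $L_0^{\tilde{\nabla}}f=L_0^\nabla f+s'$ and determine $s'$. The condition $\tilde{\nabla}L_0^{\tilde{\nabla}}f\in\mathcal{Z}_1$ reduces, using $\nabla L_0^\nabla f\in\mathcal{Z}_1$, to the equation $\partial^*\tilde{\nabla}s'=-\partial^*(\iota_{\Pi(L_0^\nabla f)}\kappa)$, whose right-hand side lies in $(\mathcal{A}M)^1$ by the preceding step. At the graded level, $\partial^*\tilde{\nabla}$ reduces to the algebraic Kostant Laplacian $\partial^*\partial$ on $C_0(\mathfrak{p}_+,\mathfrak{g})=\mathfrak{g}$, which is invertible on the complement of $H_0(\mathfrak{p}_+,\mathfrak{g})$ and preserves homogeneity. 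Solving iteratively gives $s'\in(\mathcal{A}M)^1\subset\mathfrak{p}$, so $\Pi(s')=0$ and $\Pi(L_0^{\tilde{\nabla}}f)=\Pi(L_0^\nabla f)$.

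Combining these observations, the difference
\begin{equation*}
	\tilde{\nabla}L_0^{\tilde{\nabla}}f-\nabla L_0^\nabla f
	=\iota_{\Pi(L_0^\nabla f)}\kappa+\nabla s'
\end{equation*}
is a sum of two sections of $(\Omega^1\otimes\mathcal{A}M)^1$, using the preceding paragraph and the fact that $\nabla$ preserves homogeneity. It also lies in $\mathcal{Z}_1$ by construction. The hypothesis that $H_1(\mathfrak{p}_+,\mathfrak{g})$ is concentrated in non-positive homogeneity ensures $\mathcal{H}_1^1=0$, so the projection of this difference to $\mathcal{H}_1$ vanishes, giving $D_0^{\tilde{\nabla}}f=D_0^\nabla f$. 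The main obstacle is the homogeneity estimate $s'\in(\mathcal{A}M)^1$ in the third paragraph; while intuitively clear from comparing homogeneities, it requires careful iterative solution of the BGG equation via Kostant's algebraic Hodge decomposition and a tracking of the lower-order corrections in $\tilde{\nabla}-\nabla$.
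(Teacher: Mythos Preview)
The paper does not supply its own proof of this theorem; it is stated with attribution to \v{C}ap \cite{Cap-08}*{3.5 Theorem} and then used as a black box. So there is no argument in the paper to compare against.

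Your outline is essentially correct and follows the natural homogeneity argument. The key observation---that torsion-freeness forces $\iota_{\Pi(s)}\kappa\in\Omega^1\otimes(\mathcal{A}M)^0\subset(\Omega^1\otimes\mathcal{A}M)^1$ for \emph{every} section $s$, because the value part lies in $(\mathcal{A}M)^0$ while any $1$-form automatically carries form-homogeneity $\geq 1$---is exactly right. The step you flag as the main obstacle, namely $s'\in(\mathcal{A}M)^1$, does go through: on $\im\partial^*_1\subset C_0$ the operator $\partial^*\tilde{\nabla}$ agrees with the invertible Kostant Laplacian $\Box=\partial^*\partial$ modulo strictly homogeneity-raising terms, and since the right-hand side $-\partial^*(\iota_{\Pi(L_0^\nabla f)}\kappa)$ lies in $(\mathcal{A}M)^1$, the standard finite iteration produces $s'$ entirely within $(\mathcal{A}M)^1\cap\im\partial^*_1$. (Here one needs $\Box$ to be invertible on $C_0$ in every homogeneity $\geq 1$; this holds because $H_0(\mathfrak{p}_+,\mathfrak{g})$ is concentrated in the lowest grading degree.) With this established, $\Pi(s')=0$ and your concluding paragraph is correct as written.
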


However, in CR geometry, the normal regular parabolic geometry $(\mathcal{G},\omega)$
associated with a compatible almost CR manifold $(M,H,J)$ by Theorem \ref{thm:existence-of-normal-cartan-connection}
is torsion-free only if $(M,H,J)$ is integrable,
as we shall see in Section \ref{sec:construction-of-normal-tractor-connections}.
Therefore, in the non-integrable case,
Theorem \ref{thm:first-adjoint-BGG-operators-agree} does not guarantee that $D_0^{\tilde{\nabla}}=D_0^\nabla$.
Our conclusion in Section \ref{sec:computation-of-CR-BGG-operators} is going to be
that they are actually not the same.

\section{Lie algebra homology relevant to CR geometry}
\label{sec:Lie-algebra-homology-for-CR-geometry}

Hereafter $\mathfrak{g}$ denotes $\mathfrak{su}(n+1,1)$,
the special unitary algebra of indefinite signature $(n+1,1)$,
and we consider the parabolic subalgebra $\mathfrak{p}$ associated with
a certain $\abs{2}$-grading of $\mathfrak{g}$ (see \eqref{eq:2-grading}),
which corresponds to geometry of compatible almost CR structures.
In order to specialize the general theory outlined in the previous section to this case,
in this section we express the Lie algebra homology $H_k(\mathfrak{p}_+,\mathbb{W})$ and
the relevant Kostant codifferential $\partial^*$ in explicit terms when $\mathbb{W}$ is
the standard representation $\mathbb{V}=\mathbb{C}^{n+1,1}$ or the adjoint representation $\mathfrak{g}$,
and for $k=0$, $1$.

It suffices to treat the both representations simply as $\mathfrak{g}$-modules in this section,
but as group representations we later need to consider $\mathbb{V}$ as a $G^\sharp$-module,
while $\mathfrak{g}$ can be treated as a $G$-module,
where $G^\sharp=\SU(n+1,1)$ and $G=\PSU(n+1,1)$.
In this article, parabolic geometries or infinitesimal flag structures are said to be
\emph{corresponding to geometry of compatible almost CR structures} or simply \emph{of CR type}
when they are of type $(G,P)$ or $(G^\sharp,P^\sharp)$, where
$P$ and $P^\sharp$ are the parabolic subgroups defined in Section \ref{subsec:frame-bundles-in-CR-geometry}.

\subsection{Preliminaries}

We regard $\mathfrak{g}=\mathfrak{su}(n+1,1)$ as the Lie algebra of the special unitary group
associated with the indefinite Hermitian inner product of $\mathbb{C}^{n+2}$ given by
\begin{equation}
	\label{eq:indefinite-Hermitian-inner-product}
	\begin{pmatrix}
		& & 1 \\
		& I & \\
		1 & &
	\end{pmatrix},
\end{equation}
where $I$ is the $n\times n$ identity matrix. It is easily seen that
\begin{equation}
	\mathfrak{g}=\Set{
		\begin{pmatrix}
			a & Z & iz \\
			X & A & -Z^* \\
			ix & -X^* & -\conj{a}
		\end{pmatrix}
	|
		\Centerstack{%
			{$x$, $z\in\mathbb{R}$, $X\in\mathbb{C}^n$, $Z\in(\mathbb{C}^n)^*$,}
			{$A\in\mathfrak{u}(n)$, $a\in\mathbb{C}$, $a+\tr A-\conj{a}=0$}%
		}
	},
\end{equation}
where $\mathbb{C}^n$ and $(\mathbb{C}^n)^*$ denote the set of column vectors and that of row vectors, respectively,
and $*$ on the upper right of $X^*$ and $Z^*$ indicates the conjugate transpose.
With index notation, we can also express elements of $\mathfrak{g}$ as
\begin{equation}
	\label{eq:general-elements-of-g}
	\begin{pmatrix}
		a & Z_\beta & iz \\
		X^\alpha & \tensor{A}{_\beta^\alpha} & -Z^\alpha \\
		ix & -X_\beta & -\conj{a}
	\end{pmatrix}
\end{equation}
by setting
\begin{equation}
	X^{\conj{\alpha}}=\conj{X^\alpha},\qquad
	Z_{\conj{\beta}}=\conj{Z_\beta},\qquad
	X_\beta=h_{\beta{\conj{\alpha}}}X^{\conj{\alpha}},\qquad
	Z^\alpha=h^{\alpha{\conj{\beta}}}Z_{\conj{\beta}},
\end{equation}
where
\begin{equation}
	\label{eq:inner-product-for-Lie-algebra-q}
	h_{\alpha{\conj{\beta}}}=
	h^{\alpha{\conj{\beta}}}=
	\begin{cases}
		1, & \alpha=\beta, \\
		0, & \text{otherwise}.
	\end{cases}
\end{equation}

We also use the complexification $\tilde{\mathfrak{g}}$ of $\mathfrak{g}$.
Note that $\tilde{\mathfrak{g}}$ equals $\mathfrak{sl}(n+2,\mathbb{C})$, the set of complex trace-free matrices,
with complex conjugation relative to the real form $\mathfrak{g}$.
That is, if we express a general element of $\tilde{\mathfrak{g}}$ as
\begin{equation}
	\label{eq:general-elements-of-g-complexified}
	\begin{pmatrix}
		a & Z_\beta & iz \\
		X^\alpha & \tensor{A}{_\beta^\alpha} & -W^\alpha \\
		ix & -Y_\beta & -b
	\end{pmatrix},
\end{equation}
where $x$, $z\in\mathbb{C}$, $X$, $W\in\mathbb{C}^n$, $Y$, $Z\in(\mathbb{C}^n)^*$,
$A\in\mathfrak{gl}(n,\mathbb{C})$, $a$, $b\in\mathbb{C}$ with $a+\tr A-b=0$, then
\begin{equation}
	\overline{
		\begin{pmatrix}
			a & Z_\beta & iz \\
			X^\alpha & \tensor{A}{_\beta^\alpha} & -W^\alpha \\
			ix & -Y_\beta & -b
		\end{pmatrix}
	}=
	\begin{pmatrix}
		\conj{b} & W_\beta & i\conj{z} \\
		Y^\alpha & -\tensor{A}{^\alpha_\beta} & -Z^\alpha \\
		i\conj{x} & -X_\beta & -{\conj{a}}
	\end{pmatrix},
\end{equation}
where
\begin{equation}
	X^{\conj{\alpha}}=\conj{X^\alpha},\qquad
	Y_{\conj{\beta}}=\conj{Y_\beta},\qquad
	Z_{\conj{\beta}}=\conj{Z_\beta},\qquad
	W^{\conj{\alpha}}=\conj{W^\alpha},\qquad
	\tensor{A}{_{\conj{\beta}}^{\conj{\alpha}}}=\conj{\tensor{A}{_\beta^\alpha}}
\end{equation}
and indices are lowered/raised by \eqref{eq:inner-product-for-Lie-algebra-q}.

We endow the Lie algebra $\mathfrak{g}$ with the $\abs{2}$-grading
\begin{equation}
	\label{eq:2-grading}
	\mathfrak{g}=\mathfrak{g}_{-2}\oplus\mathfrak{g}_{-1}\oplus\mathfrak{g}_0\oplus\mathfrak{g}_1\oplus\mathfrak{g}_2
\end{equation}
given by the grading element
\begin{equation}
	\label{eq:grading-element}
	E=
	\begin{pmatrix}
		1 & & \\
		& O & \\
		& & -1
	\end{pmatrix}.
\end{equation}
The subscript $i$ of $\mathfrak{g}_i$, which is the eigenvalue of $\ad E$ on $\mathfrak{g}_i$,
equals the distance from the diagonal of the (possibly) non-zero
entries of its elements in the block form \eqref{eq:general-elements-of-g}. That is,
\begin{gather}
	\begin{pmatrix}
		\phantom{M} & \phantom{M} & \phantom{M} \\
		& & \\
		ix & &
	\end{pmatrix},\quad
	\begin{pmatrix}
		\phantom{M} & \phantom{M} & \phantom{M} \\
		X^\alpha & & \\
		& -X_\beta &
	\end{pmatrix},\quad
	\begin{pmatrix}
		a & & \phantom{M} \\
		  & \tensor{A}{_\beta^\alpha} & \\
		\phantom{M} & & -\conj{a}
	\end{pmatrix},\\
	\begin{pmatrix}
		& Z_\beta & \\
		& & -Z^\alpha \\
		\phantom{M} & \phantom{M} & \phantom{M}
	\end{pmatrix},\quad
	\begin{pmatrix}
		& & iz \\
		& & \\
		\phantom{M} & \phantom{M} & \phantom{M}
	\end{pmatrix}
\end{gather}
are the elements of $\mathfrak{g}_{-2}$, $\mathfrak{g}_{-1}$, $\mathfrak{g}_0$, $\mathfrak{g}_1$, and $\mathfrak{g}_2$,
respectively.

Following the general notation used in Section \ref{sec:general-theory-on-parabolic-geometry}, we write
\begin{equation}
	\mathfrak{g}_-=\mathfrak{g}_{-2}\oplus\mathfrak{g}_{-1},\qquad
	\mathfrak{p}=\mathfrak{g}_0\oplus\mathfrak{g}_1\oplus\mathfrak{g}_2,\qquad
	\mathfrak{p}_+=\mathfrak{g}_1\oplus\mathfrak{g}_2.
\end{equation}
The complexification of the $\abs{2}$-grading is expressed as
$\tilde{\mathfrak{g}}
=\tilde{\mathfrak{g}}_{-2}\oplus\tilde{\mathfrak{g}}_{-1}
\oplus\tilde{\mathfrak{g}}_0\oplus\tilde{\mathfrak{g}}_1\oplus\tilde{\mathfrak{g}}_2$,
and $\tilde{\mathfrak{g}}_-$, $\tilde{\mathfrak{p}}$, $\tilde{\mathfrak{p}}_+$ denote the complexification of
their untilded counterparts.

We introduce the basis $\xi_0$, $\xi_1$, $\dotsc$, $\xi_n$, $\xi_{\conj{1}}$, $\dotsc$, $\xi_{\conj{n}}$
of $\tilde{\mathfrak{g}}_-$ by
\begin{equation}
	\label{eq:basis-negative}
	\xi_0=
	\begin{pmatrix}
		\phantom{M} & & \\
		& \phantom{M} & \\
		i & & \phantom{M}
	\end{pmatrix},\qquad
	\xi_\sigma=
	\begin{pmatrix}
		\phantom{M} & & \\
		\tensor{\delta}{_\sigma^\alpha} & \phantom{M} & \\
		& 0 & \phantom{M}
	\end{pmatrix},\qquad
	\xi_{\conj{\sigma}}=
	\begin{pmatrix}
		\phantom{M} & & \\
		0 & \phantom{M} & \\
		& -h_{\beta{\conj{\sigma}}} & \phantom{M}
	\end{pmatrix},
\end{equation}
and the basis $\zeta^0$, $\zeta^1$, $\dotsc$, $\zeta^n$, $\zeta^{\conj{1}}$, $\dotsc$, $\zeta^{\conj{n}}$
of $\tilde{\mathfrak{p}}_+$ by
\begin{equation}
	\label{eq:basis-positive}
	\zeta^0=
	\begin{pmatrix}
		\phantom{M} & & i \\
		& \phantom{M} & \\
		& & \phantom{M}
	\end{pmatrix},\qquad
	\zeta^\sigma=
	\begin{pmatrix}
		\phantom{M} & \tensor{\delta}{_\beta^\sigma} & \\
		& \phantom{M} & 0 \\
		& & \phantom{M}
	\end{pmatrix},\qquad
	\zeta^{\conj{\sigma}}=
	\begin{pmatrix}
		\phantom{M} & 0 & \\
		& \phantom{M} & -h^{\alpha{\conj{\sigma}}} \\
		& & \phantom{M}
	\end{pmatrix}.
\end{equation}
Note that $\xi_0$ and $\zeta^0$ are real elements, which belong to $\mathfrak{g}_{-2}$ and $\mathfrak{g}_2$,
respectively.
The vectors $\xi_\sigma$, $\xi_{\conj{\sigma}}\in\tilde{\mathfrak{g}}_{-1}$ are the complex conjugates of each other,
and so are $\zeta^\sigma$, $\zeta^{\conj{\sigma}}\in\tilde{\mathfrak{g}}_1$.

\begin{lem}
	\label{lem:g0-contains-no-simple-ideals-of-g}
	$\mathfrak{g}_0$ contains no nontrivial ideals of $\mathfrak{g}$.
\end{lem}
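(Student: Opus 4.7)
The plan is to exploit the simplicity of $\mathfrak{g} = \mathfrak{su}(n+1,1)$ as a real Lie algebra: its only ideals are $\{0\}$ and $\mathfrak{g}$ itself. Since the $\abs{2}$-grading \eqref{eq:2-grading} is manifestly nontrivial---for instance $\xi_0 \in \mathfrak{g}_{-2}$ from \eqref{eq:basis-negative} is a nonzero element lying outside $\mathfrak{g}_0$---the subalgebra $\mathfrak{g}_0$ is a proper subspace of $\mathfrak{g}$. Hence the only ideal of $\mathfrak{g}$ contained in $\mathfrak{g}_0$ is the zero ideal, which is the assertion of the lemma.

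The only ingredient invoked is the simplicity of $\mathfrak{su}(n+1,1)$, which is standard: its complexification is $\mathfrak{sl}(n+2,\mathbb{C})$, a complex simple Lie algebra for all $n \geq 0$, and $\mathfrak{su}(n+1,1)$ is a noncompact real form that is not the realification of any complex Lie algebra, so it is simple as a real Lie algebra. Should one wish for a hands-on substitute that avoids invoking the classification, one could instead take an arbitrary nonzero element $X \in \mathfrak{g}_0$, decompose it according to \eqref{eq:general-elements-of-g}, and verify by direct bracketing against the basis vectors $\xi_0, \xi_\sigma, \xi_{\conj{\sigma}}$ of \eqref{eq:basis-negative} and $\zeta^0, \zeta^\sigma, \zeta^{\conj{\sigma}}$ of \eqref{eq:basis-positive} that the $\mathfrak{g}$-ideal generated by $X$ necessarily acquires a nonzero component in $\mathfrak{g}_{\pm 1}$ or $\mathfrak{g}_{\pm 2}$. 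This would be needlessly laborious, however, and I anticipate no serious obstacle to the short proof via simplicity.
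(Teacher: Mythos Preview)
Your argument via the simplicity of $\mathfrak{su}(n+1,1)$ is correct and efficient. The paper instead takes the hands-on route you sketch as an alternative: for $F\in I\subset\mathfrak{g}_0$, the brackets $[F,\xi_0]$ and $[F,\xi_\sigma]$ lie in $I\cap\mathfrak{g}_{-2}$ and $I\cap\mathfrak{g}_{-1}$ respectively, hence vanish; writing out these vanishing conditions forces $F$ to be a purely imaginary scalar multiple of the identity, which the trace-free condition then kills. Your approach is cleaner and applies verbatim to any simple $\mathfrak{g}$ with a nontrivial grading, while the paper's computation is self-contained and avoids appealing to the classification of real forms. Either is perfectly adequate here.
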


\begin{proof}
	Suppose that $I$ is an ideal of $\mathfrak{g}$ contained in $\mathfrak{g}_0$.
	Then, since \eqref{eq:2-grading} is a grading, any element
	\begin{equation}
		F=
		\begin{pmatrix}
			a & & \phantom{M} \\
			  & \tensor{A}{_\beta^\alpha} & \\
			\phantom{M} & & -\conj{a}
		\end{pmatrix}
		\in I
	\end{equation}
	must satisfy $[F,\xi_0]=[F,\xi_\sigma]=0$.
	Then $\Re a=0$ and $\tensor{A}{_\beta^\alpha}-a\tensor{\delta}{_\beta^\alpha}=0$ follow,
	and consequently, $F$ is of the form
	\begin{equation}
		F=
		\begin{pmatrix}
			ic & & \phantom{M} \\
			  & ic\tensor{\delta}{_\beta^\alpha} & \\
			\phantom{M} & & ic
		\end{pmatrix},
		\qquad c\in\mathbb{R}.
	\end{equation}
	However, since $F\in\mathfrak{g}_0$ we have $c=0$, and hence $I=0$.
\end{proof}

Since \eqref{eq:2-grading} is a grading,
the Killing form $B$ of $\tilde{\mathfrak{g}}$ vanishes on
$\tilde{\mathfrak{g}}_i\times\tilde{\mathfrak{g}}_j$ unless $i+j=0$.
It can be easily seen that, on $\tilde{\mathfrak{g}}_{-2}\times\tilde{\mathfrak{g}}_2$ and
on $\tilde{\mathfrak{g}}_{-1}\times\tilde{\mathfrak{g}}_1$, we have
\begin{equation}
	B(\xi_0,\zeta^0)=-2(n+2)
\end{equation}
and
\begin{equation}
	B(\xi_\sigma,\zeta^\tau)=2(n+2)\tensor{\delta}{_\sigma^\tau},\qquad
	B(\xi_{\conj{\sigma}},\zeta^{\conj{\tau}})=2(n+2)\tensor{\delta}{_{\conj{\sigma}}^{\conj{\tau}}},\qquad
	B(\xi_\sigma,\zeta^{\conj{\tau}})=B(\xi_{\conj{\sigma}},\zeta^\tau)=0.
\end{equation}
Consequently, the dual basis of $\tilde{\mathfrak{p}}_+$ to
the basis $\xi_0$, $\xi_\sigma$, $\xi_{\conj{\sigma}}$ of $\tilde{\mathfrak{g}}_-$
with respect to $B$ is given by
\begin{equation}
	\label{eq:dual-basis-positive}
	\xi^*_0=-\frac{1}{2(n+2)}\zeta^0,\qquad
	\xi^*_\sigma=\frac{1}{2(n+2)}\zeta^\sigma,\qquad
	\xi^*_{\conj{\sigma}}=\frac{1}{2(n+2)}\zeta^{\conj{\sigma}}.
\end{equation}

The dual basis \eqref{eq:dual-basis-positive} can be used in computations of the action of the Kostant codifferential
\eqref{eq:Kostant-codifferential} on $C_k(\mathfrak{p}_+,\mathbb{W})$,
identified with $\bigwedge^k(\mathfrak{g}/\mathfrak{p})^*\otimes\mathbb{W}$ by the Killing form $B$, as follows.
For $k=1$, we have for $\psi\in(\mathfrak{g}/\mathfrak{p})^*\otimes\mathbb{W}$
\begin{equation}
	\label{eq:Kostant-codifferential-in-duality-1}
	\partial^*\psi=\sum_A\xi_A^*\cdot\psi(\xi_A),
\end{equation}
where $A$ runs through $\set{0,1,\dots,n,\conj{1},\dots,\conj{n}}$.
Indeed, \eqref{eq:Kostant-codifferential-in-duality-1} follows from
$\partial^*(\xi_A^*\otimes w)=\xi_A^*\cdot w$, which is correct by \eqref{eq:Kostant-codifferential}.
Similarly, for $\phi\in\bigwedge^2(\mathfrak{g}/\mathfrak{p})^*\otimes\mathbb{W}$ one has
\begin{equation}
	\label{eq:Kostant-codifferential-in-duality-2}
	\partial^*\phi(X)=2\sum_A[\xi_A^*,\phi(X,\xi_A)]-\sum_A\phi([\xi_A^*,X],\xi_A),\qquad
	X\in\mathfrak{g}/\mathfrak{p}.
\end{equation}
This formula appears in \cite{Cap-Slovak-09}*{Lemma 3.1.11}.

\subsection{Filtration of the standard and the adjoint representations}

Corresponding to the block form \eqref{eq:general-elements-of-g} of elements of $\mathfrak{g}$,
those of the standard representation $\mathbb{V}=\mathbb{C}^{n+2}$ of $\mathfrak{g}$ can be expressed as
\begin{equation}
	\label{eq:block-form-of-standard-representation}
	\begin{pmatrix} s \\ t^\alpha \\ u \end{pmatrix},
\end{equation}
which corresponds to the eigendecomposition
\begin{equation}
	\mathbb{V}=\mathbb{V}_1\oplus\mathbb{V}_0\oplus\mathbb{V}_{-1}
\end{equation}
induced by the grading element \eqref{eq:grading-element}.
The associated $\mathfrak{p}$-invariant filtration is
\begin{equation}
	\mathbb{V}=\mathbb{V}^{-1}\supset\mathbb{V}^0\supset\mathbb{V}^1,
\end{equation}
where
\begin{equation}
	\mathbb{V}^0=\mathbb{V}_0\oplus\mathbb{V}_1=\Set{\begin{pmatrix} s \\ t^\alpha \\ 0 \end{pmatrix}},\qquad
	\mathbb{V}^1=\mathbb{V}_1=\Set{\begin{pmatrix} s \\ 0 \\ 0 \end{pmatrix}}.
\end{equation}
Note that $\mathbb{V}$ carries the $\mathfrak{g}$-invariant indefinite Hermitian inner product
\begin{equation}
	\label{eq:standard-representation-pairing}
	\Braket{
		\begin{pmatrix}
			s \\ t^\alpha \\ u
		\end{pmatrix}
		,
		\begin{pmatrix}
			s' \\ t'^\alpha \\ u'
		\end{pmatrix}
	}
	=s\conj{u}'+\sum_{\alpha=1}^n t^\alpha\conj{t'^\alpha}+u\conj{s}'.
\end{equation}

The $\mathfrak{p}$-module $\mathbb{V}^1=\mathbb{V}_1$ is denoted by $E(-1,0)$,
in view of the fact that its associated tractor bundle can be identified
with the density bundle $\mathcal{E}(-1,0)$,
as we see in Section \ref{sec:construction-of-normal-tractor-connections}.
We set
\begin{equation}
	E(1,0)=E(-1,0)^*,\qquad
	E(0,1)=\conj{E(1,0)}
\end{equation}
and
\begin{equation}
	E(w,w')=E(1,0)^{\otimes w}\otimes E(0,1)^{\otimes w'}.
\end{equation}
The inner product \eqref{eq:standard-representation-pairing}
induces a duality between $\gr_{-1}(\mathbb{V})=\mathbb{V}/\mathbb{V}^0$ and $\smash{\conj{\mathbb{V}}}^1$.
Consequently,
\begin{equation}
	\label{eq:representation-E01-as-bottom-component}
	\gr_{-1}(\mathbb{V})\cong E(0,1),
\end{equation}
which means that the bottom component $u$ of \eqref{eq:block-form-of-standard-representation},
which is regarded as representing an equivalence class in $\mathbb{V}/\mathbb{V}^0$ in this context,
can be regarded as an element of $E(0,1)$.

Next, we define the $\mathfrak{p}$-module $E^\alpha$ to be $\gr_0(\mathbb{V})\otimes E(1,0)$. Then
\begin{equation}
	\gr_0(\mathbb{V})\cong E^\alpha(-1,0),
\end{equation}
where we write $E^\alpha(w,w')=E^\alpha\otimes E(w,w')$.
This means that the second component $t^\alpha$ of
\begin{equation}
	\label{eq:representation-Ea-10-as-middle-component}
	\begin{pmatrix} s \\ t^\alpha \\ 0 \end{pmatrix},
\end{equation}
representing an equivalence class in $\mathbb{V}^0/\mathbb{V}^1$,
can be regarded as an element of $E^\alpha(-1,0)$.
The inner product \eqref{eq:standard-representation-pairing}
naturally induces a positive-definite Hermitian inner product $\gr_0(\mathbb{V})\times\gr_0(\mathbb{V})\to\mathbb{C}$,
and hence a canonical complex bilinear mapping
\begin{equation}
	\label{eq:gr-minus-1-pairing}
	E^\alpha\times E^{\conj{\beta}}\to E(1,1),
\end{equation}
which we write
\begin{equation}
	\label{eq:Levi-form-at-representation-level}
	h_{\alpha{\conj{\beta}}}\in E_{\alpha{\conj{\beta}}}(1,1).
\end{equation}
Indeed, this pairing will induce the weighted Levi form of compatible almost CR structures.

The complexified adjoint representation $\tilde{\mathfrak{g}}=\mathfrak{sl}(n+2)$ is
isomorphic to the trace-free part of $\End(\mathbb{V})=\mathbb{V}^*\otimes\mathbb{V}$,
and the associated graded module of $\End(\mathbb{V})$ is decomposed as
\begin{equation}
	\begin{pmatrix}
		E & E_\beta & E(-1,-1) \\
		E^\alpha & \tensor{E}{_\beta^\alpha} & E^\alpha(-1,-1) \\
		E(1,1) & E_\beta(1,1) & E
	\end{pmatrix},
\end{equation}
where $E$ is the trivial representation.
Consequently, we have the isomorphisms of $\mathfrak{p}$-modules
\begin{align}
	\gr_2(\tilde{\mathfrak{g}})
	&=\tilde{\mathfrak{g}}^2 \cong E(-1,-1), \\
	\gr_1(\tilde{\mathfrak{g}})
	&=\tilde{\mathfrak{g}}^1/\tilde{\mathfrak{g}}^2 \cong E_\beta\oplus E^\alpha(-1,-1)
	\cong E_\beta\oplus E_{\conj{\alpha}}, \\
	\gr_0(\tilde{\mathfrak{g}})
	&=\tilde{\mathfrak{g}}^0/\tilde{\mathfrak{g}}^1 \cong E\oplus E\oplus \tf\tensor{E}{_\beta^\alpha}, \\
	\gr_{-1}(\tilde{\mathfrak{g}})
	&=\tilde{\mathfrak{g}}^{-1}/\tilde{\mathfrak{g}}^0 \cong E^\alpha\oplus E_\beta(1,1)
	\cong E^\alpha\oplus E^{\conj{\beta}},\\
	\gr_{-2}(\tilde{\mathfrak{g}})&=\tilde{\mathfrak{g}}/\tilde{\mathfrak{g}}^{-1} \cong E(1,1).
\end{align}
In particular, if the vectors in \eqref{eq:basis-negative} are understood as representing
equivalence classes in $\tilde{\mathfrak{g}}/\tilde{\mathfrak{p}}$,
then the first (resp.\ the second) summand of
$\gr_{-1}(\tilde{\mathfrak{g}})=\tilde{\mathfrak{g}}^{-1}/\tilde{\mathfrak{p}}=E^\alpha\oplus E^{\conj{\beta}}$
is spanned by $\xi_\sigma$ (resp.\ by $\xi_{\conj{\sigma}}$).

\subsection{Homology of $\mathfrak{p}_+$ with values in the standard representation}
\label{subsec:homology-with-values-in-standard}

Based on the block form \eqref{eq:block-form-of-standard-representation}
for the standard representation $\mathbb{V}$, elements of $C_k(\mathfrak{p}_+,\mathbb{V})$,
which are identified with elements of $\bigwedge^k(\mathfrak{g}/\mathfrak{p})^*\otimes\mathbb{V}$, will be
denoted as
\begin{equation}
	\begin{pmatrix}
		\tensor{s}{_{A_1}_{\cdots}_{A_k}} \\
		\tensor{t}{^\alpha_{A_1}_{\cdots}_{A_k}} \\
		\tensor{u}{_{A_1}_{\cdots}_{A_k}}
	\end{pmatrix}
\end{equation}
using the basis \eqref{eq:basis-negative},
where each $A_i$ is an index running through $\set{0,1,\dots,n,\conj{1},\dots,\conj{n}}$.
Sometimes the indices $A_1\cdots A_k$ in this notation will be suppressed,
in which case we will use care so that any confusion does not occur.

Then, for an element of $C_1(\mathfrak{p}_+,\mathbb{V})$,
\eqref{eq:Kostant-codifferential-in-duality-1} implies that the action of $\partial^*$ is given by
\begin{equation}
	\partial^*
	\begin{pmatrix}
		s \\ t^\alpha \\ u
	\end{pmatrix}
	=
	\frac{1}{2(n+2)}
	\begin{pmatrix}
		-iu_0+\tensor{t}{^\gamma_\gamma} \\
		-u^\alpha \\
		0
	\end{pmatrix}.
\end{equation}
For an element of $C_2(\mathfrak{p}_+,\mathbb{V})$, we have from \eqref{eq:Kostant-codifferential-in-duality-2} that
\begin{align}
	\left(
		\partial^*
		\begin{pmatrix}
			s \\ t^\alpha \\ u
		\end{pmatrix}
	\right)(\xi_0)
	&=
	\frac{1}{2(n+2)}
	\begin{pmatrix}
		\tensor{t}{^\gamma_0_\gamma}+i\tensor{s}{_\gamma^\gamma} \\
		-\tensor{u}{_0^\alpha}+i\tensor{t}{^\alpha_\gamma^\gamma} \\
		i\tensor{u}{_\gamma^\gamma}
	\end{pmatrix}, \\
	\left(
		\partial^*
		\begin{pmatrix}
			s \\ t^\alpha \\ u
		\end{pmatrix}
	\right)(\xi_\sigma)
	&=
	\frac{1}{2(n+2)}
	\begin{pmatrix}
		i\tensor{u}{_0_\sigma}+\tensor{t}{^\gamma_\sigma_\gamma} \\
		-\tensor{u}{_\sigma^\alpha} \\
		0
	\end{pmatrix}, \\
	\left(
		\partial^*
		\begin{pmatrix}
			s \\ t^\alpha \\ u
		\end{pmatrix}
	\right)(\xi_{\conj{\sigma}})
	&=
	\frac{1}{2(n+2)}
	\begin{pmatrix}
		i\tensor{u}{_0_{\conj{\sigma}}}+\tensor{t}{^\gamma_{\conj{\sigma}}_\gamma} \\
		-\tensor{u}{_{\conj{\sigma}}^\alpha} \\
		0
	\end{pmatrix}.
\end{align}

These computations can be used to identify the homology groups
$H_0(\mathfrak{p}_+,\mathbb{V})$ and $H_1(\mathfrak{p}_+,\mathbb{V})$ as follows.
The zeroth homology is given by
\begin{equation}
	H_0(\mathfrak{p}_+,\mathbb{V})
	=\mathbb{V}/\im\partial^*_1
	=\mathbb{V}/\mathbb{V}^0
	\cong E(0,1),
\end{equation}
where the last identification is nothing but \eqref{eq:representation-E01-as-bottom-component}.
As mentioned after \eqref{eq:representation-E01-as-bottom-component}, we can regard
the bottom component $u$ of the vector \eqref{eq:block-form-of-standard-representation}
in the standard representation $\mathbb{V}$ as an element of $E(0,1)$ by identifying $u$ with
the equivalence class of the vector \eqref{eq:block-form-of-standard-representation}.
Using this interpretation, we can express
the projection $\mathbb{V}\to H_0(\mathfrak{p}_+,\mathbb{V})\cong E(0,1)$ as,
in fact tautologically,
\begin{equation}
	\begin{pmatrix}
		s \\ t^\alpha \\ u
	\end{pmatrix}
	\mapsto u.
\end{equation}
The first homology is
\begin{equation}
	\begin{split}
		H_1(\mathfrak{p}_+,\mathbb{V})
		&=\frac{\ker\partial^*_1}{\im\partial^*_2}
		=\frac{\Set{
			\begin{pmatrix}
				\tensor{s}{_A} \\
				\tensor{t}{^\alpha_A} \\
				\tensor{u}{_A}
			\end{pmatrix} |
			u_{\conj{\sigma}}=0,\, u_0=-i\tensor{t}{^\gamma_\gamma}
		}}{\Set{
			\begin{pmatrix}
				\tensor{s}{_A} \\
				\tensor{t}{^\alpha_A} \\
				\tensor{u}{_A}
			\end{pmatrix} |
			u_\sigma=u_{\conj{\sigma}}=0,\,u_0=-i\tensor{t}{^\gamma_\gamma},\,
			t_{\conj{\sigma}\conj{\tau}}=t_{[\conj{\sigma}\conj{\tau}]}
		}} \\
		&\cong\set{(u_\sigma)}
		\oplus
		\set{(\tensor{t}{^\alpha_{\conj{\beta}}})
		|t_{\conj{\alpha}\conj{\beta}}=t_{\conj{\beta}\conj{\alpha}}}
		\cong E_\sigma(0,1)\oplus E_{(\conj{\alpha}\conj{\beta})}(0,1),
	\end{split}
\end{equation}
where the projections from $\ker\partial_1^*$ onto
$E_\sigma(0,1)$ and onto $E_{(\conj{\alpha}\conj{\beta})}(0,1)$ are given by
\begin{equation}
	\begin{pmatrix}
		\tensor{s}{_A} \\
		\tensor{t}{^\alpha_A} \\
		\tensor{u}{_A}
	\end{pmatrix}
	\mapsto (u_\sigma)
	\qquad\text{and}\qquad
	\begin{pmatrix}
		\tensor{s}{_A} \\
		\tensor{t}{^\alpha_A} \\
		\tensor{u}{_A}
	\end{pmatrix}
	\mapsto (t_{(\conj{\alpha}\conj{\beta})}).
\end{equation}
To understand the latter, i.e.,
to see that the component $(t_{(\conj{\alpha}\conj{\beta})})$ of any element of $\ker\partial^*_1$
can be regarded as an element of $E_{(\conj{\alpha}\conj{\beta})}(0,1)$,
one must note that $u_{\conj{\sigma}}=0$.
By this, and by recalling \eqref{eq:representation-Ea-10-as-middle-component},
we can regard the middle component of the $A=\conj{\sigma}$ part
\begin{equation}
	\begin{pmatrix}
		\tensor{s}{_{\conj{\sigma}}} \\
		\tensor{t}{^\alpha_{\conj{\sigma}}} \\
		0
	\end{pmatrix}
\end{equation}
of any element of $\ker\partial^*_1$ as giving an element of $\tensor{E}{^\alpha_{\conj{\sigma}}}(-1,0)$.
Then, by lowering an index using \eqref{eq:Levi-form-at-representation-level} and taking the symmetric part,
we arrive at the conclusion that $(t_{(\conj{\alpha}\conj{\beta})})$ is interpretable as
an element of $E_{(\conj{\alpha}\conj{\beta})}(0,1)$.

\subsection{Homology of $\mathfrak{p}_+$ with values in the adjoint representation}
\label{subsec:homology-with-values-in-adjoint}

Following the manner used in the previous subsection,
elements of $C_k(\mathfrak{p}_+,\mathfrak{g})$,
which are identified with elements of $\bigwedge^k(\mathfrak{g}/\mathfrak{p})^*\otimes\mathfrak{g}$, will be
expressed as
\begin{equation}
	\label{eq:block-form-of-adjoint-valued-forms}
	\begin{pmatrix}
		\tensor{a}{_{A_1}_{\cdots}_{A_k}} & \tensor{Z}{_\beta_{A_1}_{\cdots}_{A_k}} & i\tensor{z}{_{A_1}_{\cdots}_{A_k}} \\
		\tensor{X}{^\alpha_{A_1}_{\cdots}_{A_k}} & \tensor{A}{_\beta^\alpha_{A_1}_{\cdots}_{A_k}} & -\tensor{Z}{^\alpha_{A_1}_{\cdots}_{A_k}} \\
		i\tensor{x}{_{A_1}_{\cdots}_{A_k}} & -\tensor{X}{_\beta_{A_1}_{\cdots}_{A_k}} & -\tensor{\conj{a}}{_{A_1}_{\cdots}_{A_k}}
	\end{pmatrix}
\end{equation}
based on \eqref{eq:general-elements-of-g}.
As before, the indices $A_1\cdots A_k$ in this notation can be suppressed.

Then, for an element $\psi\in(\mathfrak{g}/\mathfrak{p})^*\otimes\mathfrak{g}$
expressed as \eqref{eq:block-form-of-adjoint-valued-forms},
it follows from \eqref{eq:Kostant-codifferential-in-duality-1} that
\begin{equation}
	\partial^*\psi
	=\frac{1}{2(n+2)}
	\begin{pmatrix}
		x_0+\tensor{X}{^\gamma_\gamma}
		& i\tensor{X}{_\beta_0}+\tensor{A}{_\beta^\gamma_\gamma}-a_\beta
		& i(a_0+\conj{a}_0)+\tensor{Z}{_\gamma^\gamma}-\tensor{Z}{^\gamma_\gamma} \\
		-ix^\alpha
		& -\tensor{X}{^\alpha_\beta}+\tensor{X}{_\beta^\alpha}
		& i\tensor{X}{^\alpha_0}+\smash{\conj{a}}^\alpha+\tensor{A}{_\gamma^\alpha^\gamma} \\
		0 & -ix_\beta & -x_0-\tensor{X}{_\gamma^\gamma} 
	\end{pmatrix}.
\end{equation}
Likewise, for $\phi\in\bigwedge^2(\mathfrak{g}/\mathfrak{p})^*\otimes\mathfrak{g}$,
\eqref{eq:Kostant-codifferential-in-duality-2} implies that
\begin{align}
	(\partial^*\phi)(\xi_\sigma)
	&=\frac{1}{2(n+2)}
	\begin{pmatrix}
		\tensor{X}{^\gamma_\sigma_\gamma}-\tensor{x}{_0_\sigma} & \tensor{A}{_\beta^\gamma_\sigma_\gamma}-i\tensor{X}{_\beta_0_\sigma}-\tensor{a}{_\sigma_\beta} & \tensor{Z}{_\gamma_\sigma^\gamma}-\tensor{Z}{^\gamma_\sigma_\gamma}-i\tensor{a}{_0_\sigma}-i\tensor{\conj{a}}{_0_\sigma} \\
		-i\tensor{x}{_\sigma^\alpha} & -\tensor{X}{^\alpha_\sigma_\beta}+\tensor{X}{_\beta_\sigma^\alpha} & \tensor{A}{_\gamma^\alpha_\sigma^\gamma}-i\tensor{X}{^\alpha_0_\sigma}+\tensor{\conj{a}}{_\sigma^\alpha} \\
		0 & -i\tensor{x}{_\sigma_\beta} & -\tensor{X}{_\gamma_\sigma^\gamma}+\tensor{x}{_0_\sigma}
	\end{pmatrix}, \\
	(\partial^*\phi)(\xi_{\conj{\sigma}})
	&=\frac{1}{2(n+2)}
	\begin{pmatrix}
		\tensor{X}{^\gamma_{\conj{\sigma}}_\gamma}-\tensor{x}{_0_{\conj{\sigma}}} & \tensor{A}{_\beta^\gamma_{\conj{\sigma}}_\gamma}-i\tensor{X}{_\beta_0_{\conj{\sigma}}}-\tensor{a}{_{\conj{\sigma}}_\beta} & \tensor{Z}{_\gamma_{\conj{\sigma}}^\gamma}-\tensor{Z}{^\gamma_{\conj{\sigma}}_\gamma}-i\tensor{a}{_0_{\conj{\sigma}}}-i\tensor{\conj{a}}{_0_{\conj{\sigma}}} \\
		-i\tensor{x}{_{\conj{\sigma}}^\alpha} & -\tensor{X}{^\alpha_{\conj{\sigma}}_\beta}+\tensor{X}{_\beta_{\conj{\sigma}}^\alpha} & \tensor{A}{_\gamma^\alpha_{\conj{\sigma}}^\gamma}-i\tensor{X}{^\alpha_0_{\conj{\sigma}}}+\tensor{\conj{a}}{_{\conj{\sigma}}^\alpha} \\
		0 & -i\tensor{x}{_{\conj{\sigma}}_\beta} & -\tensor{X}{_\gamma_{\conj{\sigma}}^\gamma}+\tensor{x}{_0_{\conj{\sigma}}}
	\end{pmatrix}, \\
	(\partial^*\phi)(\xi_0)
	&=\frac{1}{2(n+2)}
	\begin{pmatrix}
		\tensor{X}{^\gamma_0_\gamma}+i\tensor{a}{_\gamma^\gamma} & i\tensor{Z}{_\beta_\gamma^\gamma}+\tensor{A}{_\beta^\gamma_0_\gamma}-\tensor{a}{_0_\beta} & -\tensor{z}{_\gamma^\gamma}+\tensor{Z}{_\gamma_0^\gamma}-\tensor{Z}{^\gamma_0_\gamma} \\
		i(\tensor{X}{^\alpha_\gamma^\gamma}-\tensor{x}{_0^\alpha}) & i\tensor{A}{_\beta^\alpha_\gamma^\gamma}-\tensor{X}{^\alpha_0_\beta}+\tensor{X}{_\beta_0^\alpha} & -i\tensor{Z}{^\alpha_\gamma^\gamma}+\tensor{A}{_\gamma^\alpha_0^\gamma}+\tensor{\conj{a}}{_0^\alpha} \\
		-\tensor{x}{_\gamma^\gamma} & -i(\tensor{X}{_\beta_\gamma^\gamma}+\tensor{x}{_0_\beta}) & -\tensor{X}{_\gamma_0^\gamma}-i\tensor{\conj{a}}{_\gamma^\gamma}
	\end{pmatrix}.
\end{align}

It follows that the zeroth homology is
\begin{equation}
	H_0(\mathfrak{p}_+,\mathfrak{g})=\mathfrak{g}/\im\partial_1^*=\mathfrak{g}/\mathfrak{g}^{-1}
	\cong\Re E(1,1),
\end{equation}
where the projection $\mathfrak{g}\to\Re E(1,1)$ is given by
\begin{equation}
	\begin{pmatrix}
		a & Z_\beta & iz \\
		X^\alpha & \tensor{A}{_\beta^\alpha} & -Z^\alpha \\
		ix & -X_\beta & -\conj{a}
	\end{pmatrix}
	\mapsto x.
\end{equation}
The first homology can be computed as
\begin{equation}
	\begin{split}
		H_1(\mathfrak{p}_+,\mathfrak{g})
		&=\frac{\ker\partial^*_1}{\im\partial^*_2}
		=\frac{\Set{
			\begin{pmatrix}
				\tensor{a}{_A} & \tensor{Z}{_\beta_A} & i\tensor{z}{_A} \\
				\tensor{X}{^\alpha_A} & \tensor{A}{_\beta^\alpha_A} & -\tensor{Z}{^\alpha_A} \\
				i\tensor{x}{_A} & -\tensor{X}{_\beta_A} & -\tensor{\conj{a}}{_A}
			\end{pmatrix} |
			\Centerstack{{$x_\sigma=x_{\conj{\sigma}}=0$, $x_0=-\tensor{X}{^\gamma_\gamma}$,}
				{$\tensor{X}{^\alpha_0}=i\tensor{A}{_\gamma^\alpha^\gamma}+i\tensor{\conj{a}}{^\alpha}$,}
				{$a_0+\conj{a}_0=i\tensor{Z}{_\gamma^\gamma}-i\tensor{Z}{^\gamma_\gamma}$}}
		}}{\Set{
			\begin{pmatrix}
				\tensor{a}{_A} & \tensor{Z}{_\beta_A} & i\tensor{z}{_A} \\
				\tensor{X}{^\alpha_A} & \tensor{A}{_\beta^\alpha_A} & -\tensor{Z}{^\alpha_A} \\
				i\tensor{x}{_A} & -\tensor{X}{_\beta_A} & -\tensor{\conj{a}}{_A}
			\end{pmatrix} |
			\Centerstack{{$x_\sigma=x_{\conj{\sigma}}=0$, $x_0=-\tensor{X}{^\gamma_\gamma}$,}
				{$\tensor{X}{_\sigma_\tau}=\tensor{X}{_[_\sigma_\tau_]},$}
				{$\tensor{X}{^\alpha_0}=i\tensor{A}{_\gamma^\alpha^\gamma}+i\tensor{\conj{a}}{^\alpha}$,}
				{$a_0+\conj{a}_0=i\tensor{Z}{_\gamma^\gamma}-i\tensor{Z}{^\gamma_\gamma}$}}
		}} \\
		&\cong\Re
		(\set{(\tensor{X}{^\alpha_{\conj{\beta}}})
		|X_{\conj{\alpha}\conj{\beta}}=X_{\conj{\beta}\conj{\alpha}}}
		\oplus
		\set{(\tensor{X}{_\alpha_\beta})
		|X_{\alpha\beta}=X_{\beta\alpha}}) \\
		&\cong \Re(E_{(\alpha\beta)}(1,1)\oplus E_{(\conj{\alpha}\conj{\beta})}(1,1)),
	\end{split}
\end{equation}
where we define the projection
$\ker\partial^*_1\to\Re(E_{(\alpha\beta)}(1,1)\oplus E_{(\conj{\alpha}\conj{\beta})}(1,1))$ by
\begin{equation}
	\begin{pmatrix}
		\tensor{a}{_A} & \tensor{Z}{_\beta_A} & i\tensor{z}{_A} \\
		\tensor{X}{^\alpha_A} & \tensor{A}{_\beta^\alpha_A} & -\tensor{Z}{^\alpha_A} \\
		i\tensor{x}{_A} & -\tensor{X}{_\beta_A} & -\tensor{\conj{a}}{_A}
	\end{pmatrix}
	\mapsto
	(X_{(\alpha\beta)},X_{(\conj{\alpha}\conj{\beta})}).
\end{equation}

Note that, in particular, $H_1(\mathfrak{p}_+,\mathfrak{g})$ is concentrated in
(the equivalence classes of elements of) $\mathfrak{g}_1\otimes\mathfrak{g}_{-1}$,
and hence in homogeneity $0$.
Together with Lemma \ref{lem:g0-contains-no-simple-ideals-of-g},
this fact guarantees that Theorem \ref{thm:existence-of-normal-cartan-connection} is applicable
to regular infinitesimal flag structures of CR type.

\section{Weyl structures and the CR normal Weyl forms}
\label{sec:construction-of-normal-tractor-connections}

In this section and the next one, we will, for normal regular parabolic geometries of CR type,
derive concrete formulae of the tractor connections
associated with the standard representation $\mathbb{V}$ and the adjoint representation $\mathfrak{g}$.

We are going to do so with the help of the general notion of \emph{Weyl structures} of
parabolic geometries $(\mathcal{G},\omega)$,
which are by definition $P_+$-equivariant sections $\sigma\colon\mathcal{G}_0\to\mathcal{G}$
where $\mathcal{G}_0=\mathcal{G}/P_+$
(see \v{C}ap--Slov\'ak \cite{Cap-Slovak-09}*{Chapter 5}).
A choice of a Weyl structure $\sigma$ gives rise to the \emph{induced Weyl form}
$\sigma^*\omega\in\Omega^1(\mathcal{G}_0,\mathfrak{g})$,
and by studying (normal) Weyl forms directly, one can effectively avoid
using the original (normal) parabolic geometry in the description of tractor bundles,
as we recall in the general setting in Section \ref{subsec:tractor-calculus-in-terms-of-Weyl-forms}
following \cite{Cap-Slovak-09}.
In Sections \ref{subsec:frame-bundles-in-CR-geometry}--\ref{subsec:normal-Weyl-form-continued},
we specialize in geometries of CR type and determine the normal Weyl form.

\subsection{Tractor calculus in terms of Weyl forms}
\label{subsec:tractor-calculus-in-terms-of-Weyl-forms}

Let $G$ be an arbitrary semisimple Lie group and $P$ its parabolic subgroup.
Recall the notion of infinitesimal flag structures of type $(G,P)$ introduced
in Section \ref{subsec:infinitesimal-flag-structures}.
Abstractly, Weyl forms of infinitesimal flag structures are defined as follows
(see \cite{Cap-Slovak-09}*{Definition 5.2.1}).

\begin{dfn}
	\label{dfn:Weyl-form}
	A \emph{Weyl form} of an infinitesimal flag structure
	$((T^iM)_{i=-k}^{-1},\mathcal{G}_0,(\theta_{-k},\dots,\theta_{-1}))$ of type $(G,P)$ is
	a $G_0$-invariant 1-form $\tau\in\Omega^1(\mathcal{G}_0,\mathfrak{g})$ such that
	\begin{enumerate}[(i)]
		\item
			$\tau(\zeta_A)=A$ for $A\in\mathfrak{g}_0$, where $\zeta_A$ is the fundamental vector field;
		\item
			For $-k\le i\le -1$, $\tau|_{T^i\mathcal{G}_0}$ takes values in $\mathfrak{g}^i$ and
			equals $\theta_i$ modulo $\mathfrak{g}^{i+1}$.
	\end{enumerate}
\end{dfn}

Given a Weyl structure $\sigma$ of a parabolic geometry $(\mathcal{G},\omega)$,
one can use the induced infinitesimal flag structure
$((T^iM)_{i=-k}^{-1},\mathcal{G}_0,(\theta_{-k},\dots,\theta_{-1}))$ and
the induced Weyl form $\tau=\sigma^*\omega$ to describe tractor bundles.
First, if $\mathbb{W}$ is a $(\mathfrak{g},P)$-module, then the associated bundle is
\begin{equation}
	\mathcal{W}=\mathcal{G}\times_P\mathbb{W}\cong\mathcal{G}_0\times_{G_0}\mathbb{W},
\end{equation}
the isomorphism being given by identifying $\assocbracket{g_0,w}^{G_0}$ with $\assocbracket{\sigma(g_0),w}^P$
(where $\assocbracket{\dotsb}$ denotes the equivalence class of $(\dotsb)$).
In order to describe the tractor connection $\nabla$ using this $\sigma$-dependent expression
$\mathcal{W}\cong\mathcal{G}_0\times_{G_0}\mathbb{W}$,
we need to introduce three objects related with the Weyl structure $\sigma$.
Let
\begin{equation}
	\sigma^*\omega=(\sigma^*\omega_-,\sigma^*\omega_0,\sigma^*\omega_+)
	\in
	\Omega^1(\mathcal{G}_0,\mathfrak{g}_-)\oplus
	\Omega^1(\mathcal{G}_0,\mathfrak{g}_0)\oplus
	\Omega^1(\mathcal{G}_0,\mathfrak{p}_+)
\end{equation}
be the decomposition of the induced Weyl form $\sigma^*\omega$
with respect to $\mathfrak{g}=\mathfrak{g}_-\oplus\mathfrak{g}_0\oplus\mathfrak{p}_+$.
The middle part $\gamma^\sigma=\sigma^*\omega_0$ is
a principal connection of $\mathcal{G}_0$, which is called the \emph{Weyl connection}.
The negative part $\sigma^*\omega_-$ is the \emph{soldering form},
which defines an isomorphism $T_{p(u)}M\cong\mathfrak{g}_-$ for each $u\in\mathcal{G}_0$,
where $p\colon\mathcal{G}_0\to M$ is the projection.
The positive part $\mathsf{P}^\sigma=\sigma^*\omega_+$ is called the \emph{Rho tensor}.
Then it is known that $\nabla$ can be expressed as follows.

\begin{prop}[\cite{Cap-Slovak-09}*{Proposition 5.1.10}]
	\label{prop:tractor-connection-in-terms-of-Weyl-form}
	Let $\sigma$ be a Weyl structure of a parabolic geometry $(\mathcal{G},\omega)$.
	Then the tractor connection of
	$\mathcal{W}=\mathcal{G}\times_P\mathbb{W}\cong\mathcal{G}_0\times_{G_0}\mathbb{W}$ is given by
	\begin{equation}
		\nabla_\xi s
		=\nabla^\sigma_\xi s+\mathsf{P}^\sigma(\xi)\cdot s+\xi\cdot s
	\end{equation}
	for a vector field $\xi$,
	where $\nabla^\sigma$ denotes the covariant differentiation with respect to the Weyl connection $\gamma^\sigma$
	and, in the last term on the right-hand side,
	$\xi$ is identified with a $\mathfrak{g}_-$-valued function by the soldering form $\sigma^*\omega_-$.
\end{prop}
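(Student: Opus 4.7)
The plan is to unpack the definitions of $D_\xi s$ and $\nabla_\xi s$ directly in terms of the pulled-back Weyl form $\tau=\sigma^*\omega$. Since $\sigma\colon\mathcal{G}_0\to\mathcal{G}$ is $P_+$-equivariant, it induces the isomorphism $\mathcal{W}\cong\mathcal{G}_0\times_{G_0}\mathbb{W}$, so a section $s$ of $\mathcal{W}$ corresponds equivalently to a $P$-equivariant function $\bar{s}\colon\mathcal{G}\to\mathbb{W}$ or to its restriction $\tilde{s}=\bar{s}\circ\sigma\colon\mathcal{G}_0\to\mathbb{W}$, which is $G_0$-equivariant. Under the soldering form $\sigma^*\omega_-$, a vector field $\xi$ on $M$ corresponds to a $G_0$-equivariant $\mathfrak{g}_-$-valued function on $\mathcal{G}_0$, which I denote by the same letter.

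The key computation is to identify $\sigma_*(X_u)\in T_{\sigma(u)}\mathcal{G}$ for $u\in\mathcal{G}_0$ and $X_u\in T_u\mathcal{G}_0$ the $\gamma^\sigma$-horizontal lift of $\xi_{p(u)}$, where $p\colon\mathcal{G}_0\to M$. Decomposing the Weyl form as $\tau=\sigma^*\omega_-+\sigma^*\omega_0+\sigma^*\omega_+$, one verifies that $\tau_u(X_u)=\xi(u)+0+\mathsf{P}^\sigma(\xi)(u)$: horizontality with respect to the Weyl connection kills the $\mathfrak{g}_0$-part, the defining property of the soldering form identifies the $\mathfrak{g}_-$-part with $\xi$, and the $\mathfrak{p}_+$-part is $\mathsf{P}^\sigma(\xi)$ by definition. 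Since $\omega_{\sigma(u)}\circ\sigma_*=\tau_u$, this yields
\begin{equation}
	\sigma_*(X_u)=\omega^{-1}_{\sigma(u)}\bigl(\xi(u)+\mathsf{P}^\sigma(\xi)(u)\bigr).
\end{equation}
Consequently, with $\eta:=\xi+\mathsf{P}^\sigma(\xi)$ viewed as a $\mathfrak{g}$-valued lift of $\xi$, at $\sigma(u)$ one computes
\begin{equation}
	(D_\eta\bar{s})(\sigma(u))=\omega^{-1}(\eta)\bar{s}\big|_{\sigma(u)}=(\sigma_*X_u)(\bar{s})=X_u(\bar{s}\circ\sigma)=X_u(\tilde{s})=(\nabla^\sigma_\xi s)(u),
\end{equation}
the last equality being the horizontal-lift characterization of covariant differentiation with respect to the principal connection $\gamma^\sigma$. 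Adding the algebraic contribution $\eta\cdot\bar{s}=\xi\cdot s+\mathsf{P}^\sigma(\xi)\cdot s$ then gives the asserted formula.

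The only subtlety to check is that the tractor connection $\nabla_\eta s=D_\eta s+\eta\cdot s$ is \emph{a priori} defined for $\eta\in\mathfrak{g}$, while we are applying it to one specific $\mathfrak{g}$-valued lift of the $\mathfrak{g}_-$-valued function representing $\xi$. Independence of the lift (which is what makes $\nabla$ descend to a genuine connection on $\mathcal{W}$ over $M$) follows from the $(\mathfrak{g},P)$-module property: shifting $\eta$ by an element of $\mathfrak{p}$ corresponds to adding a vertical vector field on $\mathcal{G}$, whose action on $\bar{s}$ exactly cancels the compensating algebraic shift, as recalled in Section \ref{subsec:tractor-bundles-and-connections}. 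I expect no genuine obstacle; the content of the proposition is the systematic bookkeeping of the three summands of $\tau$ along $\sigma$, and any care required lies in keeping the soldering and quotient identifications of $TM$ consistent between $\mathcal{G}_0$ and $\mathcal{G}$.
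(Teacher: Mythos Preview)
The paper does not supply its own proof of this proposition; it is stated with a citation to \v{C}ap--Slov\'ak \cite{Cap-Slovak-09}*{Proposition 5.1.10} and used as a black box. Your argument is essentially the standard one given there: pull sections and vector fields back along the Weyl structure $\sigma$, take the $\gamma^\sigma$-horizontal lift on $\mathcal{G}_0$, and read off the three summands of $\tau=\sigma^*\omega$ to identify the fundamental derivative with $\nabla^\sigma_\xi$ plus the algebraic $\mathsf{P}^\sigma(\xi)$-term, the remaining $\xi\cdot s$ coming from the definition of the tractor connection. The bookkeeping and the final remark on independence of the $\mathfrak{p}$-lift are both correct, so there is nothing to add.
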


It is crucially important in this approach to know that, for normal regular parabolic geometries,
we can characterize the Weyl form $\tau=\sigma^*\omega$
induced by some Weyl structure $\sigma$ directly at the level of the infinitesimal flag structure.
For an arbitrary Weyl form $\tau$ of the induced regular infinitesimal flag structure
(or of an infinitesimal flag structure in general),
its \emph{curvature form} $K\in\Omega^2(\mathcal{G}_0,\mathfrak{g})$ is defined by
\begin{equation}
	K=d\tau+\frac{1}{2}[\tau\wedge\tau],
\end{equation}
and, since $TM\cong\mathcal{G}_0\times_{G_0}\mathfrak{g}_-$, the curvature $K$ is equivalently expressed by
the \emph{curvature function} $\kappa\colon\mathcal{G}_0\to\bigwedge^2\mathfrak{g}_-\otimes\mathfrak{g}$.
Then $\tau$ is said to be \emph{normal} if $\partial^*\kappa=0$ is satisfied.
Under the assumption that $H^1(\mathfrak{g}_-,\mathfrak{g})$ is concentrated in non-positive
homogeneity degrees (cf.\ Theorem \ref{thm:existence-of-normal-cartan-connection}),
the normality is in fact a necessary and sufficient condition for $\tau$ being
induced by some Weyl structure \cite{Cap-Slovak-09}*{Theorem 5.2.2}.
Therefore, in order to compute the normal tractor connections explicitly,
it just suffices to investigate normal Weyl forms of the induced infinitesimal flag structure.

\subsection{Frame bundles in CR geometry and the Tanaka--Webster connection}
\label{subsec:frame-bundles-in-CR-geometry}

We next discuss how the geometry of a compatible almost CR structure $(H,J)$ on $M$ is encoded
in terms of regular infinitesimal flag structures, or equivalently
(see Proposition \ref{prop:infinitesimal-flag-structure-as-structure-group-reduction}),
in terms of reductions of the structure group of the full graded frame bundle $\mathcal{F}_{\gr}$
associated with the filtration $TM=T^{-2}M\supset T^{-1}M=H$.
We also need to discuss how the geometry of $(H,J,\mathcal{E}(1,0))$,
where $\mathcal{E}(1,0)$ is an $(n+2)$-nd root of the canonical bundle $\mathcal{K}$
(cf.\ Section \ref{subsec:basics}), can be encoded similarly.
To some extent, this subsection is an elaboration of \cite{Cap-Slovak-09}*{Example 3.1.7}.

Let $G^\sharp=\SU(n+1,1)$ be the special unitary group with respect to the indefinite Hermitian inner product
\eqref{eq:indefinite-Hermitian-inner-product}
and $G=\PSU(n+1,1)=G^\sharp/Z(G^\sharp)$ its quotient by its center $Z(G^\sharp)\cong\mathbb{Z}_{n+2}$
(the set of constant matrices of determinant $1$ in $G^\sharp$).
We define the parabolic subgroup $P$ of $G$ as the stabilizer of the null complex line
$\braket{\transpose{\smash{(1\; 0\; \cdots\; 0\; 0)}}}$ in $\mathbb{C}^{n+1,1}$,
and $P^\sharp$ is defined to be the pullback of $P$ by the projection $G^\sharp\to G$.
Then the Levi subgroups are
\begin{equation}
	G_0^\sharp=
	\Set{
		\begin{pmatrix}
			c & & \\
			& U & \\
			& & 1/\conj{c}
		\end{pmatrix}
		|
		\text{$c\in\mathbb{C}^\times$, $U\in U(n)$, and $\dfrac{c}{\,\conj{c}\,}\det U=1$}
		}
\end{equation}
and $G_0=G_0^\sharp/\mathbb{Z}_{n+2}$.

First, given a compatible almost CR structure $(H,J)$ on a smooth manifold $M$, we define
\begin{equation}
	\label{eq:G0-first-expression}
	\mathcal{G}_0
	=\bigsqcup_{x\in M}
	\Set{\varphi\colon\mathfrak{g}_-\to\gr(T_xM)|
		\Centerstack{%
			\text{$\varphi$ is a grading respecting linear isomorphism preserving}
			\text{the complex structures $j$ on $\mathfrak{g}_{-1}$ and $J$ on $H_x$, and}
			\text{the brackets $\mathfrak{g}_{-1}\times\mathfrak{g}_{-1}\to\mathfrak{g}_{-2}$
				and $H_x\times H_x\to T_xM/H_x$}%
		}
	},
\end{equation}
where $H_x\times H_x\to T_xM/H_x$ is the Levi bracket.
Clearly, $\mathcal{G}_0$ can be equivalently described as
\begin{equation}
	\label{eq:G0-second-expression}
	\mathcal{G}_0
	=\bigsqcup_{x\in M}
	\Set{(Z_1,\dots,Z_n,T)|
		\Centerstack{%
			\text{$Z_1$, $\dots$, $Z_n\in H^{1,0}_x$, $T\in T_xM/H_x$, and}
			\text{$[Z_\alpha,Z_{\conj{\beta}}]=-i\delta_{\alpha{\conj{\beta}}}T$ in $T_xM/H_x$}%
		}
	},
\end{equation}
or even as
\begin{equation}
	\label{eq:G0-third-expression}
	\mathcal{G}_0
	=\bigsqcup_{x\in M}
	\Set{(Z_1,\dots,Z_n)|
		\Centerstack{%
			\text{$Z_1$, $\dots$, $Z_n\in H^{1,0}_x$ is a conformal unitary frame}
			\text{with respect to $i[\mathord{\cdot},\mathord{\cdot}]\colon H^{1,0}_x\times H^{0,1}_x
			\to T_xM/H_x$}%
		}
	}.
\end{equation}
The adjoint action of the group $G_0$ on $\mathfrak{g}_-$
induces its right action on $\mathcal{G}_0$, given explicitly
using the second expression \eqref{eq:G0-second-expression} by
\begin{equation}
	\label{eq:G0-action-on-frame-bundle}
	(Z_1,\dots,Z_n,T)\cdot
	\left[
	\begin{pmatrix}
		c & & \\
		  & U & \\
		  & & 1/\conj{c}
	\end{pmatrix}
	\right]
	=(c^{-1}(Z_1,\dots,Z_n)U,\abs{c}^{-2}T).
\end{equation}
Thus the bundle $\mathcal{G}_0$ is a reduction of the structure group of $\mathcal{F}_{\gr}$ to $G_0$,
and the agreement of the brackets $\mathfrak{g}_{-1}\times\mathfrak{g}_{-1}\to\mathfrak{g}_{-2}$
and $H_x\times H_x\to T_xM/H_x$
means that the corresponding infinitesimal flag structure is regular.
Conversely, any regular infinitesimal flag structure corresponding to a structure group reduction
of $\mathcal{F}_{\gr}$ to $G_0$
is induced this way by some compatible almost CR structure.

The third expression \eqref{eq:G0-third-expression} of $\mathcal{G}_0$ shows that
the conformal unitary group $\CU(n)\subset\GL(n,\mathbb{C})$ is naturally acting on
$\mathcal{G}_0$ from the right.
This action and the one defined by \eqref{eq:G0-action-on-frame-bundle} are
compatible with the isomorphism $G_0\cong\CU(n)$ given by
\begin{equation}
	\label{eq:correspondence-of-G0-and-CUn}
	\left[
		\begin{pmatrix}
			c & & \\ & U & \\ & & 1/\conj{c}
		\end{pmatrix}
	\right]
	\mapsto c^{-1}U.
\end{equation}

Next, suppose that we are moreover given a complex line bundle $\mathcal{E}(1,0)$
with $\mathcal{E}(1,0)^{-n-2}=\mathcal{K}$.
Then the $(n+2)$-sheeted covering $\mathcal{G}_0^\sharp$ of $\mathcal{G}_0$ is defined by
\begin{equation}
	\mathcal{G}_0^\sharp
	=\bigsqcup_{x\in M}
	\Set{(Z_1,\dots,Z_n,T,\eta)|
		\Centerstack{%
			\text{$Z_1$, $\dots$, $Z_n\in H^{1,0}_x$, $T\in T_xM/H_x$,}
			\text{$[Z_\alpha,Z_{\conj{\beta}}]=-i\delta_{\alpha\conj{\beta}}T$ in $T_xM/H_x$, and}
			\text{$\eta\in\mathcal{E}(1,0)_x$, $\eta^{n+2}=T\wedge Z_1\wedge\dots\wedge Z_n\in\mathcal{K}_x^*$}%
		}
	},
\end{equation}
on which $G_0^\sharp$ acts by
\begin{equation}
	(Z_1,\dots,Z_n,T,\eta)\cdot
	\begin{pmatrix}
		c & & \\
		  & U & \\
		  & & 1/\conj{c}
	\end{pmatrix}
	=(c^{-1}(Z_1,\dots,Z_n)U,\abs{c}^{-2}T,c^{-1}\eta).
\end{equation}
The principal bundle structures of $\mathcal{G}_0$ and $\mathcal{G}_0^\sharp$ are
compatible with the projections $G_0^\sharp\to G_0$ and $\mathcal{G}_0^\sharp\to\mathcal{G}_0$,
and thus $\mathcal{G}_0^\sharp$ is a reduction of the structure group of $\mathcal{F}_{\gr}$ to $G_0^\sharp$.
Giving such a reduction is equivalent to giving a structure $(H,J,\mathcal{E}(1,0))$.

The Tanaka--Webster connection associated with any fixed contact form $\theta$ can be thought of
as a principal connection of $\mathcal{G}_0$ or $\mathcal{G}_0^\sharp$.
To describe this, take any local unitary frame $\set{Z_\alpha}$ of $H^{1,0}$ with respect to $\theta$,
defined over an open set $U$ of $M$.
Let $\omega=\tensor{\omega}{_\alpha^\beta}$ be the Tanaka--Webster connection form with respect $\set{Z_\alpha}$,
which takes values in $\mathfrak{u}(n)$.
Then, in terms of the local trivialization $\mathcal{G}_0|_U\cong U\times\CU(n)$ given by the frame $\set{Z_\alpha}$,
we define the $\mathfrak{cu}(n)$-valued principal connection form $\gamma$ of $\mathcal{G}_0$ by
\begin{equation}
	\gamma=\omega+g^{-1}dg,
\end{equation}
where $g\in\CU(n)$ denotes the fiber coordinate of $\mathcal{G}_0$ with respect to the above local trivialization.
Since $G_0\cong\CU(n)$ by \eqref{eq:correspondence-of-G0-and-CUn}, $\gamma$ can also be regarded as
a $\mathfrak{g}_0$-valued 1-form,
whose pullback $\underline{\gamma}$
by the local section $\bm{Z}\colon U\to\mathcal{G}_0|_U$ given by $\set{Z_\alpha}$ is
\begin{equation}
	\label{eq:Tanaka-Webster-connection-as-principal-g0-connection}
	\underline{\gamma}=\bm{Z}^*\gamma=
	\begin{pmatrix}
		-\frac{1}{n+2}\tensor{\omega}{_\gamma^\gamma} & & \\
		& \tensor{\omega}{_\alpha^\beta}-\frac{1}{n+2}\tensor{\omega}{_\gamma^\gamma}\tensor{\delta}{_\alpha^\beta} & \\
		& & -\frac{1}{n+2}\tensor{\omega}{_\gamma^\gamma}
	\end{pmatrix}.
\end{equation}
The pullback of $\gamma$ by the covering $\mathcal{G}_0^\sharp\to\mathcal{G}_0$,
which is a principal $G_0^\sharp$-connection form on $\mathcal{G}_0^\sharp$, is denoted by $\gamma^\sharp$.

\subsection{Exact Weyl structures and determination of the induced normal Weyl form}
\label{subsec:exact-Weyl-structures-and-normal-Weyl-form}

There is a special class of Weyl structures called \emph{exact Weyl structures}
in the theory of parabolic geometries (see \cite{Cap-Slovak-09}*{Section 5.1.7}).
Exact Weyl structures are in one-to-one correspondence with flat connections of any fixed ``bundle of scales''
induced by its global trivializations.
For parabolic geometries of CR type (or more generally, for parabolic contact structures),
the bundle $H^\perp$ of annihilators of $H$, which is associated with $\mathcal{G}_0$
through the homomorphism
\begin{equation}
	\label{eq:group-homomorphism-for-CR-bundle-of-scales}
	G_0\to\mathbb{R}^*,\qquad
	\left[
		\begin{pmatrix}
			c & & \\ & U & \\ & & 1/\conj{c}
		\end{pmatrix}
	\right]
	\mapsto\abs{c}^2
\end{equation}
or with $\mathcal{G}_0^\sharp$ through the composition $G_0^\sharp\to G_0\to\mathbb{R}^*$ of
\eqref{eq:group-homomorphism-for-CR-bundle-of-scales} and the quotient mapping,
is a bundle of scales \cite{Cap-Slovak-09}*{Section 5.2.11}.
Consequently, any contact form $\theta$ determines an exact Weyl structure
$\sigma_\theta\colon\mathcal{G}_0\to\mathcal{G}$,
or $\sigma_\theta^\sharp\colon\mathcal{G}_0^\sharp\to\mathcal{G}^\sharp$,
which is characterized by the fact that the associated Weyl connection
on $\mathcal{G}_0$ or $\mathcal{G}_0^\sharp$ induces
a linear connection of $H^\perp$ with respect to which $\theta$ is parallel.

We are going to determine the Weyl form induced by an exact Weyl structure
from a given normal regular parabolic geometry of CR type.
In fact, we discuss only the case of type $(G,P)$ in detail;
then the Weyl form $\tau_\theta^\sharp$ induced by $\sigma_\theta^\sharp$
from a normal regular parabolic geometry of type $(G^\sharp,P^\sharp)$ can simply be obtained by
pulling back the Weyl form $\tau_\theta$ induced from the corresponding normal regular parabolic geometry
of type $(G,P)$ by the standard covering mapping $\mathcal{G}_0^\sharp\to\mathcal{G}_0$.

So, let $(\mathcal{G},\omega)$ be a regular parabolic geometry of type $(G,P)$
and let $\sigma_\theta\colon\mathcal{G}_0\to\mathcal{G}$ be an exact Weyl structure as above.
Take an arbitrary local unitary frame $\set{Z_\alpha}$ of $H^{1,0}$ with respect to $\theta$
and let $\set{\theta^\alpha}$ be the dual admissible coframe.
The induced Weyl form $\sigma_\theta^*\omega$ is denoted by $\tau_\theta$, or simply by $\tau$ in the sequel.
Then, since the induced Weyl form $\tau$ satisfies the conditions in Definition \ref{dfn:Weyl-form},
its pullback $\underline{\tau}=\bm{Z}^*\tau$
by the local section $\bm{Z}\colon U\to\mathcal{G}_0|_U$ given by $\set{Z_\alpha}$ must be of the form
\begin{equation}
	\label{eq:weyl-form-compatibility-with-CR-infinitesimal-flag-structure}
	\underline{\tau}=
	\begin{pmatrix}
		* & * & * \\
		\theta^\alpha\bmod\theta & * & * \\
		i\theta & -\theta_\beta\bmod\theta & *
	\end{pmatrix},
\end{equation}
where each $*$ denotes an unspecified component and
$\theta_\beta=\delta_{\beta\conj{\gamma}}\theta^{\conj{\gamma}}$.

If we furthermore assume the normality of $(\mathcal{G},\omega)$ and hence that of $\tau$,
we can determine all the components of $\underline{\tau}$, which means that $\tau$ is determined
completely.
We make the following observation as the first step toward this.
Note that some general theory regarding this step is developed in \cite{Cap-Slovak-09}*{Section 5.2.11},
to which we do not appeal here.

\begin{lem}
	\label{lem:Weyl-form-and-Tanaka-Webster-connection}
	Let $(\mathcal{G},\omega)$ be a normal parabolic geometry of type $(G,P)$,
	and let $\tau$ be the Weyl form
	induced by the exact Weyl structure $\sigma_\theta\colon\mathcal{G}_0\to\mathcal{G}$
	associated with a contact form $\theta$.
	Then its pullback $\underline{\tau}$ by the local section $\bm{Z}\colon U\to\mathcal{G}_0|_U$ given by
	a unitary frame $\set{Z_\alpha}$ must be of the form
	\begin{equation}
		\label{eq:normal-Weyl-form-up-to-homogeneity-one}
		\underline{\tau}=
		\begin{pmatrix}
			-\frac{1}{n+2}\tensor{\omega}{_\gamma^\gamma}\bmod\theta & * & * \\
			\theta^\alpha & \tensor{\omega}{_\beta^\alpha}-\frac{1}{n+2}\tensor{\omega}{_\gamma^\gamma}\tensor{\delta}{_\beta^\alpha}\bmod\theta & * \\
			i\theta & -\theta_\beta & -\frac{1}{n+2}\tensor{\omega}{_\gamma^\gamma}\bmod\theta
		\end{pmatrix},
	\end{equation}
	where $\set{\theta^\alpha}$ be the dual admissible coframe
	and $\tensor{\omega}{_\beta^\alpha}$ is the Tanaka--Webster connection forms with respect to $\set{Z_\alpha}$.
\end{lem}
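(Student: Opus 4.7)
The plan is to identify each block of $\underline{\tau}$ in turn, combining the Weyl-form axioms of Definition~\ref{dfn:Weyl-form} with exactness of $\sigma_\theta$ and normality, and finally to match the $\mathfrak{g}_0$-block modulo $\theta$ against the Tanaka--Webster connection via the uniqueness of the latter. I write the $\mathfrak{g}_0$-part of $\underline{\tau}$ in the block-diagonal form
\[
\underline{\tau}_0 = \begin{pmatrix} a & & \\ & \tensor{A}{_\beta^\alpha} & \\ & & -\conj{a} \end{pmatrix},
\]
with $\tensor{A}{_\beta^\alpha}$ taking values in $\mathfrak{u}(n)$, subject to the $\mathfrak{g}_0$-trace constraint $a-\conj a = -\tr A$. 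The Weyl-form axioms, together with the fact that $\theta^\alpha$ and $\theta$ carry no $\theta$-term beyond what is displayed, pin down the $\mathfrak{g}_{-1}$- and $\mathfrak{g}_{-2}$-entries $\theta^\alpha$, $-\theta_\beta$, $i\theta$ in \eqref{eq:normal-Weyl-form-up-to-homogeneity-one}; the $\mathfrak{p}_+$-entries (the $*$'s) are free for the lemma's purpose.

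I then invoke exactness. It is equivalent to $\theta$ being parallel as a section of the scale bundle $H^\perp$, associated with $\mathcal{G}_0$ through the character \eqref{eq:group-homomorphism-for-CR-bundle-of-scales} whose derivative sends the diagonal $\mathfrak{g}_0$-element to $2\Re a$. Along $\bm{Z}$ the equivariant function representing $\theta$ is identically $1$, so parallelism reduces to $\Re a = 0$; combined with the trace constraint this yields $a = -\tfrac{1}{2}\tr A$ as an equality of $1$-forms on $U$. To identify $\tensor{A}{_\beta^\alpha}$ modulo $\theta$ I then appeal to the characterization of the Tanaka--Webster connection as the unique linear connection on $TM$ preserving $(J,h,T)$ whose horizontal torsion equals $\tfrac12 \tensor{N}{^\gamma_{\conj{\alpha}}_{\conj{\beta}}}\theta^{\conj{\alpha}}\wedge\theta^{\conj{\beta}}$, as read off from \eqref{eq:first-structure-equation} modulo $\theta$. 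Preservation of $J$ and $h$ by the linear connection induced by $\underline{\tau}_0$ is automatic from its $\mathfrak{g}_0$-valuedness together with \eqref{eq:Levi-form-at-representation-level}, and preservation of $T$ is the exactness condition just verified.

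The remaining torsion condition is where normality enters. The $\mathfrak{g}_{-1}$-component of the structure equation $K = d\tau + \tfrac12[\tau\wedge\tau]$ is
\[
K_{-1} = d\underline{\tau}_{-1} + [\underline{\tau}_{-1}\wedge\underline{\tau}_0] + [\underline{\tau}_{-2}\wedge\underline{\tau}_1].
\]
Since $\underline{\tau}_{-2} = \theta\xi_0$, the last summand is a multiple of $\theta$, so modulo $\theta$ one has $K_{-1} \equiv d\underline{\tau}_{-1} + [\underline{\tau}_{-1}\wedge\underline{\tau}_0]$, which encodes precisely the horizontal torsion of the linear connection induced by $\underline{\tau}_0$. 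By Proposition~\ref{prop:characterization-of-regularity-by-curvature}, $\kappa$ lives in homogeneity $\geq 1$; applying $\partial^*\kappa = 0$ at homogeneity $1$, using the explicit Kostant codifferential formulas developed in Section~\ref{sec:Lie-algebra-homology-for-CR-geometry}, pins this horizontal torsion to the required Nijenhuis contribution. I expect this step to be the main obstacle: one must verify that $\partial^*$ acting on the homogeneity-$1$ torsion space has sufficient injectivity to determine $\underline{\tau}_0 \bmod \theta$ uniquely, rather than leaving a gauge ambiguity in the $\mathfrak{u}(n)$-block. Once this is done, uniqueness of the Tanaka--Webster connection gives $\tensor{A}{_\beta^\alpha} \equiv \tensor{\omega}{_\beta^\alpha} - \tfrac{1}{n+2}\tensor{\omega}{_\gamma^\gamma}\tensor{\delta}{_\beta^\alpha} \pmod{\theta}$, whence $a \equiv -\tfrac{1}{n+2}\tensor{\omega}{_\gamma^\gamma} \pmod{\theta}$ follows from the exactness step.
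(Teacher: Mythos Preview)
Your approach is genuinely different from the paper's. The paper takes a purely computational route: it writes the most general correction to a reference form $\tilde\eta^{(0)}$ built from the Tanaka--Webster data, computes the curvature modulo homogeneity $\ge 2$, imposes $\partial^*\kappa=0$ together with the exactness constraint, and solves the resulting linear system to show every correction term vanishes. Your proposal instead invokes the uniqueness characterization of the Tanaka--Webster connection; this is in the spirit of \cite{Cap-Slovak-09}*{Section 5.2.11}, which the paper explicitly mentions but chooses not to use.

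There is, however, a genuine gap in your first step. The Weyl-form axioms (Definition~\ref{dfn:Weyl-form}) only constrain $\underline{\tau}_{-1}$ \emph{restricted to $H$}: for $i=-1$ one has $\tau|_{T^{-1}\mathcal{G}_0}\equiv\theta_{-1}\bmod\mathfrak{g}^0$, but the value of the $\mathfrak{g}_{-1}$-component of $\underline{\tau}$ on the Reeb direction is unconstrained by the axioms. In the paper's parametrization this is the unknown $\tensor{\mathsf{X}}{^\alpha_0}$, and showing it vanishes genuinely requires normality---it is not ``pinned down'' as you claim. Moreover, $\tensor{\mathsf{X}}{^\alpha_0}$ contributes $i\tensor{\mathsf{X}}{^\alpha_0}h_{\sigma\conj{\tau}}\theta^\sigma\wedge\theta^{\conj{\tau}}$ to $K_{-1}$ modulo $\theta$ via $d(\tensor{\mathsf{X}}{^\alpha_0}\theta)\equiv\tensor{\mathsf{X}}{^\alpha_0}\,d\theta$, so your horizontal-torsion computation is contaminated by it.

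Your acknowledged obstacle is also where the real work lies, and it is not clear your route shortens it. To show that normality forces the horizontal $\mathfrak{g}_{-1}$-torsion to reduce to the Nijenhuis term (and simultaneously kills $\tensor{\mathsf{X}}{^\alpha_0}$), you must impose $\partial^*\kappa=0$ on the homogeneity-$1$ curvature components and combine with the exactness constraint $\tensor{\mathsf{A}}{_\gamma^\gamma_\sigma}+2\mathsf{a}_\sigma=0$; this is precisely the linear system the paper solves. Once that is done, your uniqueness argument for the Tanaka--Webster connection modulo $\theta$ (two $h$-unitary connections with the same horizontal torsion agree on $H$-directions) is correct and finishes the identification of the $\mathfrak{g}_0$-block---but by that point the paper's direct computation has already produced all the correction terms, including $\tensor{\mathsf{X}}{^\alpha_0}=0$, at once.
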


We remark that the notion of homogeneity is convenient to prove the above lemma
and to determine the remaining components in the next subsection.
We say that a $p$-form $\tau$ with values in $\mathfrak{g}$ on an open set of $\mathcal{G}_0$ is
of \emph{homogeneity} $\ge l$ if $\xi_1\in T^{i_1}\mathcal{G}_0$, $\dotsc$, $\xi_p\in T^{i_p}\mathcal{G}_0$ implies
$\tau(\xi_1,\dots,\xi_p)\in\mathfrak{g}^{i_1+\dots+i_p+l}$.
For example, Definition \ref{dfn:Weyl-form} implies that any Weyl form is of homogeneity $\ge 0$.
Similar language is used for differential forms with values in $\mathfrak{g}$ on open sets of $M$,
using the filtration $T^{-2}M\supset T^{-1}M$, as well.
Thus, if we set
\begin{equation}
	\label{eq:weyl-form-modulo-terms-of-homogeneity-at-least-1}
	\eta^{(0)}=
	\begin{pmatrix}
		& & \\
		\theta^\alpha & & \\
		i\theta & -\theta_\beta & \phantom{M}
	\end{pmatrix},
\end{equation}
then \eqref{eq:weyl-form-compatibility-with-CR-infinitesimal-flag-structure} implies that
$\underline{\tau}$ equals $\eta^{(0)}$ modulo terms of homogeneity $\ge 1$.

\begin{proof}[Proof of Lemma \ref{lem:Weyl-form-and-Tanaka-Webster-connection}]
	Let
	\begin{equation}
		\tilde{\eta}^{(0)}=
		\begin{pmatrix}
			-\frac{1}{n+2}\tensor{\omega}{_\gamma^\gamma} & & \\
			\theta^\alpha & \tensor{\omega}{_\beta^\alpha}-\frac{1}{n+2}\tensor{\omega}{_\gamma^\gamma}\tensor{\delta}{_\beta^\alpha} & \\
			i\theta & -\theta_\beta & -\frac{1}{n+2}\tensor{\omega}{_\gamma^\gamma}
		\end{pmatrix}.
	\end{equation}
	Then, \eqref{eq:weyl-form-compatibility-with-CR-infinitesimal-flag-structure} implies that
	the pullback of the Weyl form $\tau$ to the base should be expressed as
	\begin{equation}
		\underline{\tau}=
		\tilde{\eta}^{(0)}+
		\underbrace{\begin{pmatrix}
			\weylcomp{a}_\sigma\theta^\sigma+\weylcomp{a}_{\conj{\sigma}}\theta^{\conj{\sigma}}\bmod\theta & * & * \\
			\tensor{\weylcomp{X}}{^\alpha_0}\theta & \tensor{\weylcomp{A}}{_\beta^\alpha_\sigma}\theta^\sigma+\tensor{\weylcomp{A}}{_\beta^\alpha_{\conj{\sigma}}}\theta^{\conj{\sigma}}\bmod\theta & * \\
			& -\tensor{\weylcomp{X}}{_\beta_0}\theta & -\conj{\weylcomp{a}}_\sigma\theta^\sigma-\conj{\weylcomp{a}}_{\conj{\sigma}}\theta^{\conj{\sigma}}\bmod\theta
		\end{pmatrix}}_{\text{homogeneity $\ge 1$}},
	\end{equation}
	where the correction term (the second term) on the right-hand side is
	$\mathfrak{g}$-valued and transforms tensorially
	with respect to the indices $\alpha$, $\beta$
	(and, of course, with respect to $\sigma$ and $\conj{\sigma}$ as well)
	for changes of unitary frames $\set{Z_\alpha}$.
	Note that, since the correction term is $\mathfrak{g}$-valued, in particular we have
	\begin{equation}
		\conj{\weylcomp{a}_\sigma}=\conj{\weylcomp{a}}_{\conj{\sigma}},\qquad
		\conj{\weylcomp{a}_{\conj{\sigma}}}=\conj{\weylcomp{a}}_\sigma,\qquad
		\weylcomp{a}_\sigma+\tensor{\weylcomp{A}}{_\gamma^\gamma_\sigma}-\conj{\weylcomp{a}}_\sigma=0.
	\end{equation}
	Moreover, since the contact form $\theta$ should be parallel with respect to the linear connection of $H^\perp$
	induced by the Weyl connection $\gamma^{\sigma_\theta}$,
	\eqref{eq:group-homomorphism-for-CR-bundle-of-scales} implies that
	\begin{equation}
		\label{eq:weyl-structure-parallelism-of-contact-form}
		\weylcomp{a}_\sigma+\conj{\weylcomp{a}}_\sigma=0
	\end{equation}
	is necessary. It follows, therefore, that
	\begin{equation}
		\label{eq:weyl-structure-parallelism-of-contact-form-consequence}
		\tensor{\weylcomp{A}}{_\gamma^\gamma_\sigma}+2\weylcomp{a}_\sigma=0.
	\end{equation}

	The pullback of the curvature form $K$ of $\tau$ to the base is given by
	\begin{equation}
		\begin{multlined}
			\underline{K}\equiv
			d\tilde{\eta}^{(0)}+\frac{1}{2}[\tilde{\eta}^{(0)}\wedge\tilde{\eta}^{(0)}] \\
			+d
				\begin{pmatrix}
					\weylcomp{a}_\sigma\theta^\sigma+\weylcomp{a}_{\conj{\sigma}}\theta^{\conj{\sigma}}\bmod\theta & * & * \\
					\tensor{\weylcomp{X}}{^\alpha_0}\theta & \tensor{\weylcomp{A}}{_\beta^\alpha_\sigma}\theta^\sigma+\tensor{\weylcomp{A}}{_\beta^\alpha_{\conj{\sigma}}}\theta^{\conj{\sigma}}\bmod\theta & * \\
					& -\tensor{\weylcomp{X}}{_\beta_0}\theta & -\conj{\weylcomp{a}}_\sigma\theta^\sigma-\conj{\weylcomp{a}}_{\conj{\sigma}}\theta^{\conj{\sigma}}\bmod\theta
				\end{pmatrix} \\
			+\left[\eta^{(0)}\wedge
				\begin{pmatrix}
					\weylcomp{a}_\sigma\theta^\sigma+\weylcomp{a}_{\conj{\sigma}}\theta^{\conj{\sigma}}\bmod\theta & * & * \\
					\tensor{\weylcomp{X}}{^\alpha_0}\theta & \tensor{\weylcomp{A}}{_\beta^\alpha_\sigma}\theta^\sigma+\tensor{\weylcomp{A}}{_\beta^\alpha_{\conj{\sigma}}}\theta^{\conj{\sigma}}\bmod\theta & * \\
					& -\tensor{\weylcomp{X}}{_\beta_0}\theta & -\conj{\weylcomp{a}}_\sigma\theta^\sigma-\conj{\weylcomp{a}}_{\conj{\sigma}}\theta^{\conj{\sigma}}\bmod\theta
				\end{pmatrix}
			\right]
		\end{multlined}
	\end{equation}
	modulo terms of homogeneity $\ge 2$,
	where $\eta^{(0)}$ is defined by \eqref{eq:weyl-form-modulo-terms-of-homogeneity-at-least-1},
	because the bracketed wedge product of two homogeneity $\ge 1$ terms is of homogeneity $\ge 2$.
	By omitting more homogeneity $\ge 2$ terms, the above formula can be simplified further to
	\begin{equation}
		\label{eq:curvature-modulo-homogeneity-2}
		\begin{multlined}
			\underline{K}\equiv
			d\tilde{\eta}^{(0)}+\frac{1}{2}[\tilde{\eta}^{(0)}\wedge\tilde{\eta}^{(0)}]
			+\begin{pmatrix}
				& & \\
				\tensor{\weylcomp{X}}{^\alpha_0} & & \\
				& -\tensor{\weylcomp{X}}{_\beta_0} & \phantom{M}
			\end{pmatrix}d\theta \\
			+\left[\eta^{(0)}\wedge
				\begin{pmatrix}
					\weylcomp{a}_\sigma\theta^\sigma+\weylcomp{a}_{\conj{\sigma}}\theta^{\conj{\sigma}} & & \\
					\tensor{\weylcomp{X}}{^\alpha_0}\theta & \tensor{\weylcomp{A}}{_\beta^\alpha_\sigma}\theta^\sigma+\tensor{\weylcomp{A}}{_\beta^\alpha_{\conj{\sigma}}}\theta^{\conj{\sigma}} & \\
					& -\tensor{\weylcomp{X}}{_\beta_0}\theta & -\conj{\weylcomp{a}}_\sigma\theta^\sigma-\conj{\weylcomp{a}}_{\conj{\sigma}}\theta^{\conj{\sigma}}
				\end{pmatrix}
			\right].
		\end{multlined}
	\end{equation}

	The normality of $\tau$ means that its curvature function $\kappa$ satisfies $\partial^*\kappa=0$,
	which is equivalent to
	\begin{equation}
		\label{eq:normality-of-Weyl-form-pulled-back}
		\partial^*\underline{\kappa}=0.
	\end{equation}
	We now determine the correction term using
	\eqref{eq:normality-of-Weyl-form-pulled-back} and
	\eqref{eq:weyl-structure-parallelism-of-contact-form-consequence}.
	First, we have
	\begin{equation}
		d\tilde{\eta}^{(0)}+\frac{1}{2}[\tilde{\eta}^{(0)}\wedge\tilde{\eta}^{(0)}]
		=
		\begin{pmatrix}
			* & * & * \\
			\frac{1}{2}\tensor{N}{^\alpha_{\conj{\sigma}}_{\conj{\tau}}}\theta^{\conj{\sigma}}\wedge\theta^{\conj{\tau}} \bmod\theta & * & * \\
			& -\frac{1}{2}\tensor{N}{_\beta_\sigma_\tau}\theta^\sigma\wedge\theta^\tau \bmod\theta & *
		\end{pmatrix}.
	\end{equation}
	Consequently, \eqref{eq:curvature-modulo-homogeneity-2} implies that
	$\underline{K}$ is of homogeneity $\ge 1$, and if we write
	\begin{equation}
		\underline{K}=
		\begin{pmatrix}
			* & * & * \\
			\bm{X}^\alpha & * & * \\
			i\bm{x} & -\bm{X}_\beta & *
		\end{pmatrix},
	\end{equation}
	then
	\begin{align}
		\tensor{\bm{x}}{_0_\sigma}
		&=i\tensor{\weylcomp{X}}{_\sigma_0},\\
		\tensor{\bm{X}}{^\alpha_\sigma_\tau}
		&=\tensor{\weylcomp{A}}{_\tau^\alpha_\sigma}
		-\tensor{\weylcomp{A}}{_\sigma^\alpha_\tau}
		+\weylcomp{a}_\tau\tensor{\delta}{_\sigma^\alpha}
		-\weylcomp{a}_\sigma\tensor{\delta}{_\tau^\alpha},\\
		\tensor{\bm{X}}{^\alpha_\sigma_{\conj{\tau}}}
		&=i\tensor{h}{_\sigma_{\conj{\tau}}}\tensor{\weylcomp{X}}{^\alpha_0}
		-\tensor{\weylcomp{A}}{_\sigma^\alpha_{\conj{\tau}}}
		+\weylcomp{a}_{\conj{\tau}}\tensor{\delta}{_\sigma^\alpha},\\
		\tensor{\bm{X}}{^\alpha_{\conj{\sigma}}_{\conj{\tau}}}
		&=\tensor{N}{^\alpha_{\conj{\sigma}}_{\conj{\tau}}}.
	\end{align}

	The normality \eqref{eq:normality-of-Weyl-form-pulled-back} implies,
	by the computation in Section \ref{subsec:homology-with-values-in-adjoint},
	\begin{subequations}
	\begin{gather}
		\label{eq:weyl-form-homogeneity-one-equation-1}
		\tensor{\bm{X}}{^\gamma_\sigma_\gamma}-\tensor{\bm{x}}{_0_\sigma}=0,\\
		\label{eq:weyl-form-homogeneity-one-equation-2}
		\tensor{\bm{X}}{^\alpha_\sigma_\beta}-\tensor{\bm{X}}{_\beta_\sigma^\alpha}=0,\\
		\label{eq:weyl-form-homogeneity-one-equation-3}
		\tensor{\bm{X}}{_\gamma_\sigma^\gamma}-\tensor{\bm{x}}{_0_\sigma}=0,\\
		\label{eq:weyl-form-homogeneity-one-equation-4}
		\tensor{\bm{X}}{_\beta_\gamma^\gamma}+\tensor{\bm{x}}{_0_\beta}=0.
	\end{gather}
	\end{subequations}
	Equations \eqref{eq:weyl-form-homogeneity-one-equation-1}, \eqref{eq:weyl-form-homogeneity-one-equation-3},
	\eqref{eq:weyl-form-homogeneity-one-equation-4} imply
	\begin{gather}
		-i\tensor{\weylcomp{X}}{_\sigma_0}
		+\tensor{\weylcomp{A}}{_\gamma^\gamma_\sigma}
		-\tensor{\weylcomp{A}}{_\sigma^\gamma_\gamma}
		-(n-1)\tensor{\weylcomp{a}}{_\sigma}=0,\\
		-2i\tensor{\weylcomp{X}}{_\sigma_0}
		+\tensor{\weylcomp{A}}{_\gamma^\gamma_\sigma}
		+n\tensor{\weylcomp{a}}{_\sigma}=0,\\
		i(n+1)\tensor{\weylcomp{X}}{_\sigma_0}
		-\tensor{\weylcomp{A}}{_\sigma^\gamma_\gamma}
		-\tensor{\weylcomp{a}}{_\sigma}=0,
	\end{gather}
	from which, together with \eqref{eq:weyl-structure-parallelism-of-contact-form-consequence}, we can conclude that
	$\tensor{\weylcomp{X}}{_\sigma_0}=\tensor{\weylcomp{A}}{_\gamma^\gamma_\sigma}
	=\tensor{\weylcomp{A}}{_\sigma^\gamma_\gamma}=\tensor{\weylcomp{a}}{_\sigma}=0$.
	Then we moreover obtain from \eqref{eq:weyl-form-homogeneity-one-equation-2} that
	$2\tensor{\weylcomp{A}}{_\beta^\alpha_\sigma}-\tensor{\weylcomp{A}}{_\sigma^\alpha_\beta}=0$,
	which implies that $\tensor{\weylcomp{A}}{_\beta^\alpha_\sigma}=0$.
\end{proof}

Note that curvature form of \eqref{eq:normal-Weyl-form-up-to-homogeneity-one} is of the form
\begin{equation}
	\label{eq:normal-Weyl-curvature-up-to-homogeneity-one}
	\underline{K}=
	\begin{pmatrix}
		* & * & * \\
		\frac{1}{2}\tensor{N}{^\alpha_{\conj{\sigma}}_{\conj{\tau}}}\theta^{\conj{\sigma}}\wedge\theta^{\conj{\tau}} \bmod\theta & * & * \\
		& -\frac{1}{2}\tensor{N}{_\beta_\sigma_\tau}\theta^\sigma\wedge\theta^\tau \bmod\theta & *
	\end{pmatrix},
\end{equation}
as the computation in the above proof shows.

We also remark that the soldering form $\tau_-$ is completely determined by
Lemma \ref{lem:Weyl-form-and-Tanaka-Webster-connection};
the pullback of $\tau_-$ to the base manifold by any local unitary frame $\bm{Z}=\set{Z_\alpha}$
equals $\eta^{(0)}$ in \eqref{eq:weyl-form-modulo-terms-of-homogeneity-at-least-1}.
Thus, differential forms on $U$ with values in $\mathfrak{g}$ can be thought of as functions on $U$
with values in $\bigwedge^p\mathfrak{g}_-^*\otimes\mathfrak{g}$ via $\eta^{(0)}$.
This in particular implies that the \emph{homogeneity $l$ component} of
a $\mathfrak{g}$-valued differential form on $U$ makes sense.

\subsection{Determination of the induced normal Weyl form (continued)}
\label{subsec:normal-Weyl-form-continued}

We continue our discussion to determine the rest of the components of the normal Weyl form $\tau$
induced by an exact Weyl structure $\sigma_\theta\colon\mathcal{G}_0\to\mathcal{G}$.
Actually, it suffices to determine $\underline{\tau}$ only modulo terms of homogeneity $\ge 3$
for our purpose of identifying the first BGG operators in Section \ref{sec:computation-of-CR-BGG-operators}.
However, we derive the full formula for future reference.

\subsubsection{Homogeneity $2$ components}

Our task here is to determine $\underline{\tau}$ modulo terms of homogeneity $\ge 3$.
Lemma \ref{lem:Weyl-form-and-Tanaka-Webster-connection} implies that, if we set
\begin{equation}
	\eta^{(1)}
	=
	\begin{pmatrix}
		-\frac{1}{n+2}\tensor{\omega}{_\gamma^\gamma} & & \\
		\theta^\alpha & \tensor{\omega}{_\beta^\alpha}-\frac{1}{n+2}\tensor{\omega}{_\gamma^\gamma}\tensor{\delta}{_\beta^\alpha} & \\
		i\theta & -\theta_\beta & -\frac{1}{n+2}\tensor{\omega}{_\gamma^\gamma}
	\end{pmatrix},
\end{equation}
then $\underline{\tau}$ can be expressed as
\begin{equation}
	\underline{\tau}=
	\eta^{(1)}+
	\underbrace{
		\begin{pmatrix}
			\weylcomp{a}_0\theta & \weylcomp{Z}_{\beta\sigma}\theta^\sigma+\weylcomp{Z}_{\beta\conj{\sigma}}\theta^{\conj{\sigma}} \bmod\theta & * \\
			& \tensor{\weylcomp{A}}{_\beta^\alpha_0}\theta & -\tensor{\weylcomp{Z}}{^\alpha_\sigma}\theta^\sigma-\tensor{\weylcomp{Z}}{^\alpha_{\conj{\sigma}}}\theta^{\conj{\sigma}} \bmod\theta \\
			& & -\conj{\weylcomp{a}}_0\theta
		\end{pmatrix}
	}_{\text{homogeneity $\ge 2$}},
\end{equation}
where $\conj{\weylcomp{a}_0}=\conj{\weylcomp{a}}_0$ and
$\weylcomp{a}_0+\tensor{\weylcomp{A}}{_\gamma^\gamma_0}-\conj{\weylcomp{a}}_0=0$.
Furthermore, as before, since the Weyl connection makes $\theta$ parallel,
$\weylcomp{a}_0$ should be purely imaginary and hence
\begin{equation}
	\label{eq:parallelism-of-contact-form-homogeneity-two}
	\tensor{\weylcomp{A}}{_\gamma^\gamma_0}+2\weylcomp{a}_0=0.
\end{equation}
The computation of the curvature form $K$ pulled back to the base
can be carried out modulo terms of homogeneity $\ge 3$ as follows:
\begin{equation}
	\label{eq:curvature-modulo-homogeneity-3}
	\begin{multlined}
		\underline{K}
		\equiv d\eta^{(1)}+\frac{1}{2}[\eta^{(1)}\wedge\eta^{(1)}]
		+\begin{pmatrix}
			\weylcomp{a}_0 & & \\
			& \tensor{\weylcomp{A}}{_\beta^\alpha_0} & \\
			& & -\conj{\weylcomp{a}}_0
		\end{pmatrix}d\theta \\
		+\left[\eta^{(0)}\wedge
			\begin{pmatrix}
				\weylcomp{a}_0\theta & \weylcomp{Z}_{\beta\sigma}\theta^\sigma+\weylcomp{Z}_{\beta\conj{\sigma}}\theta^{\conj{\sigma}} & \\
				& \tensor{\weylcomp{A}}{_\beta^\alpha_0}\theta & -\tensor{\weylcomp{Z}}{^\alpha_\sigma}\theta^\sigma-\tensor{\weylcomp{Z}}{^\alpha_{\conj{\sigma}}}\theta^{\conj{\sigma}} \\
				& & -\conj{\weylcomp{a}}_0\theta
			\end{pmatrix}
		\right].
	\end{multlined}
\end{equation}
The first two terms of the right-hand side is given by \eqref{eq:normal-Weyl-curvature-up-to-homogeneity-one}
modulo terms of homogeneity $\ge 2$,
and here we need to compute it one homogeneity higher. By a direct computation, we get
\begin{equation}
	d\eta^{(1)}+\frac{1}{2}[\eta^{(1)}\wedge\eta^{(1)}]
	=
	\begin{pmatrix}
		\Pi^{(1)} & * & * \\
		\smash{\Pi^{(1)}}^\alpha & \tensor{\smash{\Pi^{(1)}}}{_\beta^\alpha} & * \\
		& -{\Pi^{(1)}}_\beta & -\conj{\Pi^{(1)}}
	\end{pmatrix},
\end{equation}
which is a $\mathfrak{g}$-valued 2-form, where
\begin{align}
	{\smash{\Pi^{(1)}}}^\alpha
	&=\tfrac{1}{2}\tensor{N}{^\alpha_{\conj{\sigma}}_{\conj{\tau}}}\theta^{\conj{\sigma}}\wedge\theta^{\conj{\tau}}
	+\tensor{A}{^\alpha_{\conj{\sigma}}}\theta\wedge\theta^{\conj{\sigma}}, \\
	\Pi^{(1)}
	&\equiv-\tfrac{1}{n+2}(R_{\sigma\conj{\tau}}\theta^\sigma\wedge\theta^{\conj{\tau}}
	+\tfrac{1}{2}V_{\sigma\tau}\theta^\sigma\wedge\theta^\tau
	+\tfrac{1}{2}V_{\conj{\sigma}\conj{\tau}}\theta^{\conj{\sigma}}\wedge\theta^{\conj{\tau}})
	\bmod\theta,\\
	\begin{split}
		\tensor{\smash{\Pi^{(1)}}}{_\beta^\alpha}
		&\equiv(\tensor{R}{_\beta^\alpha_\sigma_{\conj{\tau}}}-\tfrac{1}{n+2}\tensor{\delta}{_\beta^\alpha}R_{\sigma\conj{\tau}})\theta^\sigma\wedge\theta^{\conj{\tau}} \\
		&\qquad+\tfrac{1}{2}(\tensor{V}{_\beta^\alpha_\sigma_\tau}-\tfrac{1}{n+2}\tensor{\delta}{_\beta^\alpha}V_{\sigma\tau})\theta^\sigma\wedge\theta^\tau
		+\tfrac{1}{2}(\tensor{V}{_\beta^\alpha_{\conj{\sigma}}_{\conj{\tau}}}-\tfrac{1}{n+2}\tensor{\delta}{_\beta^\alpha}V_{\conj{\sigma}\conj{\tau}})\theta^{\conj{\sigma}}\wedge\theta^{\conj{\tau}}\bmod\theta.
	\end{split}
\end{align}
(Note that $\Pi^{(1)}$ is purely imaginary because $\eta^{(1)}$ induces a flat connection of $H^\perp$.)
So if we write
\begin{equation}
	\underline{K}
	=\begin{pmatrix}
		\bm{a} & * & * \\
		\bm{X}^\alpha & \tensor{\bm{A}}{_\beta^\alpha} & * \\
		& \bm{X}_\beta & -\conj{\bm{a}}
	\end{pmatrix},
\end{equation}
then, by calculating the right-hand side of \eqref{eq:curvature-modulo-homogeneity-3}, we obtain
\allowdisplaybreaks
\begin{align}
	\tensor{\bm{X}}{^\alpha_\sigma_\tau}
	&=\tensor{\bm{X}}{^\alpha_\sigma_{\conj{\tau}}}=0,\\
	\tensor{\bm{X}}{^\alpha_{\conj{\sigma}}_{\conj{\tau}}}
	&=\tensor{N}{^\alpha_{\conj{\sigma}}_{\conj{\tau}}},\\
	\tensor{\bm{X}}{^\alpha_0_\sigma}
	&=\tensor{\weylcomp{A}}{_\sigma^\alpha_0}
	-\tensor{\delta}{_\sigma^\alpha}\weylcomp{a}_0+i\tensor{\weylcomp{Z}}{^\alpha_\sigma},\\
	\tensor{\bm{X}}{^\alpha_0_{\conj{\sigma}}}
	&=\tensor{A}{^\alpha_{\conj{\sigma}}}+i\tensor{\weylcomp{Z}}{^\alpha_{\conj{\sigma}}},\\
	\tensor{\bm{A}}{_\beta^\alpha_\sigma_\tau}
	&=\tensor{V}{_\beta^\alpha_\sigma_\tau}
	-\tfrac{1}{n+2}\tensor{\delta}{_\beta^\alpha}\tensor{V}{_\sigma_\tau}
	+\tensor{\delta}{_\sigma^\alpha}\tensor{\weylcomp{Z}}{_\beta_\tau}
	-\tensor{\delta}{_\tau^\alpha}\tensor{\weylcomp{Z}}{_\beta_\sigma}, \\
	\tensor{\bm{A}}{_\beta^\alpha_\sigma_{\conj{\tau}}}
	&=\tensor{R}{_\beta^\alpha_\sigma_{\conj{\tau}}}
	-\tfrac{1}{n+2}\tensor{\delta}{_\beta^\alpha}\tensor{R}{_\sigma_{\conj{\tau}}}
	+i\tensor{h}{_\sigma_{\conj{\tau}}}\tensor{\weylcomp{A}}{_\beta^\alpha_0}
	+\tensor{\delta}{_\sigma^\alpha}\tensor{\weylcomp{Z}}{_\beta_{\conj{\tau}}}
	+\tensor{h}{_\beta_{\conj{\tau}}}\tensor{\weylcomp{Z}}{^\alpha_\sigma}, \\
	\tensor{\bm{A}}{_\beta^\alpha_{\conj{\sigma}}_{\conj{\tau}}}
	&=\tensor{V}{_\beta^\alpha_{\conj{\sigma}}_{\conj{\tau}}}
	-\tfrac{1}{n+2}\tensor{\delta}{_\beta^\alpha}\tensor{V}{_{\conj{\sigma}}_{\conj{\tau}}}
	-\tensor{h}{_\beta_{\conj{\sigma}}}\tensor{\weylcomp{Z}}{^\alpha_{\conj{\tau}}}
	+\tensor{h}{_\beta_{\conj{\tau}}}\tensor{\weylcomp{Z}}{^\alpha_{\conj{\sigma}}}, \\
	\bm{a}_{\sigma\tau}
	&=-\tfrac{1}{n+2}V_{\sigma\tau}-\weylcomp{Z}_{\sigma\tau}+\weylcomp{Z}_{\tau\sigma}, \\
	\bm{a}_{\sigma\conj{\tau}}
	&=-\tfrac{1}{n+2}R_{\sigma\conj{\tau}}+ih_{\sigma\conj{\tau}}\weylcomp{a}_0-\weylcomp{Z}_{\sigma\conj{\tau}}, \\
	\bm{a}_{\conj{\sigma}\conj{\tau}}
	&=-\tfrac{1}{n+2}V_{\conj{\sigma}\conj{\tau}}.
\end{align}
\allowdisplaybreaks[0]%

The normality condition \eqref{eq:normality-of-Weyl-form-pulled-back} implies that
\begin{subequations}
\begin{gather}
	\label{eq:normal-weyl-form-homogeneity-two-equation-1}
	i\tensor{\bm{A}}{_\beta^\alpha_\gamma^\gamma}-\tensor{\bm{X}}{^\alpha_0_\beta}+\tensor{\bm{X}}{_\beta_0^\alpha}=0,\\
	\label{eq:normal-weyl-form-homogeneity-two-equation-2}
	\tensor{\bm{X}}{^\gamma_0_\gamma}+i\tensor{\bm{a}}{_\gamma^\gamma}=0,\\
	\label{eq:normal-weyl-form-homogeneity-two-equation-3}
	\tensor{\bm{A}}{_\beta^\gamma_\sigma_\gamma}-i\tensor{\bm{X}}{_\beta_0_\sigma}+\tensor{\bm{a}}{_\beta_\sigma}=0,\\
	\label{eq:normal-weyl-form-homogeneity-two-equation-4}
	-\tensor{\bm{A}}{_\beta^\gamma_\gamma_{\conj{\sigma}}}-i\tensor{\bm{X}}{_\beta_0_{\conj{\sigma}}}
	+\tensor{\bm{a}}{_\beta_{\conj{\sigma}}}=0.
\end{gather}
\end{subequations}
It follows from \eqref{eq:normal-weyl-form-homogeneity-two-equation-1} that
\begin{subequations}
\begin{equation}
	i\tensor{{R'}}{_\beta^\alpha}-\frac{i}{n+2}R\tensor{\delta}{_\beta^\alpha}
	-(n+2)\tensor{\weylcomp{A}}{_\beta^\alpha_0}+2\tensor{\delta}{_\beta^\alpha}\weylcomp{a}_0=0.
\end{equation}
Therefore, using \eqref{eq:parallelism-of-contact-form-homogeneity-two} together, we can deduce that
\begin{equation}
	\label{eq:normal-weyl-form-homogeneity-two-term-1}
	\weylcomp{a}_0=-\frac{i}{n+2}P,\qquad
	\tensor{\weylcomp{A}}{_\beta^\alpha_0}
	=i\tensor{\smash{P'}}{^\alpha_\beta}-\frac{i}{n+2}\tensor{\delta}{^\alpha_\beta}P.
\end{equation}
Next, \eqref{eq:normal-weyl-form-homogeneity-two-equation-3} implies that
\begin{equation}
	\tensor{V}{_\beta^\gamma_\sigma_\gamma}-i\tensor{A}{_\beta_\sigma}
	-(n+1)\tensor{\weylcomp{Z}}{_\beta_\sigma}+\tensor{\weylcomp{Z}}{_\sigma_\beta}=0,
\end{equation}
and hence
\begin{equation}
	\label{eq:normal-weyl-form-homogeneity-two-term-2}
	\begin{split}
		\weylcomp{Z}_{\beta\sigma}
		&=-iA_{\beta\sigma}
		+\frac{1}{n}(\nabla^*N)^\text{sym}_{\beta\sigma}
		+\frac{1}{n+2}(\nabla^*N)^\text{skew}_{\beta\sigma} \\
		&=-iA_{\beta\sigma}
		+\frac{n+1}{n(n+2)}(\nabla^*N)_{\beta\sigma}
		+\frac{1}{n(n+2)}(\nabla^*N)_{\sigma\beta}.
	\end{split}
\end{equation}
Finally, by \eqref{eq:normal-weyl-form-homogeneity-two-equation-4},
\begin{equation}
	-R''_{\beta\conj{\sigma}}
	-(n+2)\weylcomp{Z}_{\beta\conj{\sigma}}
	-h_{\beta\conj{\sigma}}\tensor{\weylcomp{Z}}{_\gamma^\gamma}=0,
\end{equation}
and thus we obtain
\begin{equation}
	\label{eq:normal-weyl-form-homogeneity-two-term-3}
	\weylcomp{Z}_{\beta\conj{\sigma}}=-P''_{\beta\conj{\sigma}}.
\end{equation}
\end{subequations}
Then \eqref{eq:normal-weyl-form-homogeneity-two-equation-2} is also satisfied.

\subsubsection{Homogeneity $3$ components}

Similarly, in the next step, we determine the normal Weyl form $\tau$ modulo terms of homogeneity $\ge 4$.
We have already shown that $\underline{\tau}$ equals
\begin{equation}
	\eta^{(2)}
	=
	\begin{pmatrix}
		-\frac{1}{n+2}\tensor{\omega}{_\gamma^\gamma}+\weylcomp{a}_0\theta & \weylcomp{Z}_{\beta\sigma}\theta^\sigma+\weylcomp{Z}_{\beta\conj{\sigma}}\theta^{\conj{\sigma}} & \\
		\theta^\alpha & \tensor{\omega}{_\beta^\alpha}-\frac{1}{n+2}\tensor{\omega}{_\gamma^\gamma}\tensor{\delta}{_\beta^\alpha}+\tensor{\weylcomp{A}}{_\beta^\alpha_0}\theta & -\tensor{\weylcomp{Z}}{^\alpha_\sigma}\theta^\sigma-\tensor{\weylcomp{Z}}{^\alpha_{\conj{\sigma}}}\theta^{\conj{\sigma}} \\
		i\theta & -\theta_\beta & -\frac{1}{n+2}\tensor{\omega}{_\gamma^\gamma}-\conj{\weylcomp{a}}_0\theta
	\end{pmatrix}
\end{equation}
modulo terms of homogeneity $\ge 3$, where the homogeneity $2$ components are given by
\eqref{eq:normal-weyl-form-homogeneity-two-term-1}, \eqref{eq:normal-weyl-form-homogeneity-two-term-2},
and \eqref{eq:normal-weyl-form-homogeneity-two-term-3}.
So we can write
\begin{equation}
	\underline{\tau}=
	\eta^{(2)}+
	\underbrace{
		\begin{pmatrix}
			\phantom{M} & \weylcomp{Z}_{\beta 0}\theta & i\weylcomp{z}_\sigma\theta^\sigma+i\weylcomp{z}_{\conj{\sigma}}\theta^{\conj{\sigma}} \bmod\theta \\
			& & -\tensor{\weylcomp{Z}}{^\alpha_0}\theta \\
			& &
		\end{pmatrix}
	}_{\text{homogeneity $\ge 3$}}.
\end{equation}
Its curvature is given modulo terms of homogeneity $\ge 4$ by
\begin{equation}
	\label{eq:curvature-modulo-homogeneity-4}
	\begin{multlined}
		\underline{K}
		\equiv d\eta^{(2)}+\frac{1}{2}[\eta^{(2)}\wedge\eta^{(2)}]
		+\begin{pmatrix}
			\phantom{M} & \weylcomp{Z}_{\beta 0} & \\
			& & -\tensor{\weylcomp{Z}}{^\alpha_0} \\
			& &
		\end{pmatrix} d\theta \\
		+\left[\eta^{(0)}\wedge
			\begin{pmatrix}
				\phantom{M} & \weylcomp{Z}_{\beta 0}\theta & i\weylcomp{z}_\sigma\theta^\sigma+i\weylcomp{z}_{\conj{\sigma}}\theta^{\conj{\sigma}} \\
				& & -\tensor{\weylcomp{Z}}{^\alpha_0}\theta \\
				& &
			\end{pmatrix}
		\right].
	\end{multlined}
\end{equation}

Note that, to determine $\weylcomp{Z}_{\beta 0}$ and $\weylcomp{z}_\sigma$ using
the normality condition \eqref{eq:normality-of-Weyl-form-pulled-back},
only the homogeneity $3$ components of $\underline{K}$ are involved.
So we set
\begin{equation}
	d\eta^{(2)}+\frac{1}{2}[\eta^{(2)}\wedge\eta^{(2)}]
	=
	\begin{pmatrix}
		\Pi^{(2)} & {\Pi^{(2)}}_\beta & * \\
		* & \tensor{\smash{\Pi^{(2)}}}{_\beta^\alpha} & -\smash{\Pi^{(2)}}^\alpha \\
		& * & -\conj{\Pi^{(2)}}
	\end{pmatrix}
\end{equation}
and
\begin{gather}
	\tensor{\smash{\Pi^{(2)}}}{_\beta^\alpha}
	=\tfrac{1}{2}\tensor{\smash{\Pi^{(2)}}}{_\beta^\alpha_K_L}\theta^K\wedge\theta^L,\qquad
	\smash{\Pi^{(2)}}
	=\tfrac{1}{2}\tensor{\smash{\Pi^{(2)}}}{_K_L}\theta^K\wedge\theta^L,\\
	{\smash{\Pi^{(2)}}}_\beta
	=\tfrac{1}{2}\tensor{\smash{\Pi^{(2)}}}{_\beta_K_L}\theta^K\wedge\theta^L,
\end{gather}
where the indices $K$ and $L$ run through $\set{0,1,\dots,n,\conj{1},\dots,\conj{n}}$,
and compute its homogeneity $3$ components,
namely $\tensor{\smash{\Pi^{(2)}}}{_\beta^\alpha_0_\sigma}$,
$\tensor{\smash{\Pi^{(2)}}}{_\beta^\alpha_0_{\conj{\sigma}}}$,
${\smash{\Pi^{(2)}}}_{0\sigma}$, ${\smash{\Pi^{(2)}}}_{0\conj{\sigma}}$,
and ${\smash{\Pi^{(2)}}}_{\beta\sigma\conj{\tau}}$, ${\smash{\Pi^{(2)}}}_{\beta\sigma\tau}$,
${\smash{\Pi^{(2)}}}_{\beta\conj{\sigma}\conj{\tau}}$.
By a direct calculation, we obtain
\allowdisplaybreaks
\begin{align}
	\tensor{\smash{\Pi^{(2)}}}{_\beta^\alpha_0_\sigma}
	&=-\tensor{W}{_\beta^\alpha_\sigma}
	+\tfrac{1}{n+2}\tensor{\delta}{_\beta^\alpha}\tensor{W}{_\gamma^\gamma_\sigma}
	-i\nabla_\sigma\tensor{\weylcomp{A}}{_\beta^\alpha_0}, \\
	\tensor{\smash{\Pi^{(2)}}}{_\beta^\alpha_0_{\conj{\sigma}}}
	&=-\tensor{W}{_\beta^\alpha_{\conj{\sigma}}}
	+\tfrac{1}{n+2}\tensor{\delta}{_\beta^\alpha}\tensor{W}{_\gamma^\gamma_{\conj{\sigma}}}
	-i\nabla_{\conj{\sigma}}\tensor{\weylcomp{A}}{_\beta^\alpha_0}, \\
	{\smash{\Pi^{(2)}}}_{0\sigma}
	&=\tfrac{1}{n+2}\tensor{W}{_\gamma^\gamma_\sigma}-\nabla_\sigma \weylcomp{a}_0, \\
	{\smash{\Pi^{(2)}}}_{0\conj{\sigma}}
	&=\tfrac{1}{n+2}\tensor{W}{_\gamma^\gamma_{\conj{\sigma}}}-\nabla_{\conj{\sigma}}\weylcomp{a}_0, \\
	{\smash{\Pi^{(2)}}}_{\beta\sigma\conj{\tau}}
	&=\nabla_\sigma\weylcomp{Z}_{\beta\conj{\tau}}-\nabla_{\conj{\tau}}\weylcomp{Z}_{\beta\sigma}, \\
	\smash{\Pi^{(2)}}_{\beta\sigma\tau}
	&=2\nabla_{[\sigma|}\weylcomp{Z}_{\beta|\tau]}
	+\tensor{N}{^{\conj{\gamma}}_\sigma_\tau}\weylcomp{Z}_{\beta\conj{\gamma}}, \\
	\smash{\Pi^{(2)}}_{\beta\conj{\sigma}\conj{\tau}}
	&=2\nabla_{[\conj{\sigma}|}\weylcomp{Z}_{\beta|\conj{\tau}]}
	+\tensor{N}{^\gamma_{\conj{\sigma}}_{\conj{\tau}}}\weylcomp{Z}_{\beta\gamma}.
\end{align}
\allowdisplaybreaks[0]%
Now if we write
\begin{equation}
	\underline{K}
	=\begin{pmatrix}
		\bm{a} & \bm{Z}_\beta & * \\
		* & \tensor{\bm{A}}{_\beta^\alpha} & -\bm{Z}^\alpha \\
		& * & -\conj{\bm{a}}
	\end{pmatrix},
\end{equation}
then we obtain from \eqref{eq:curvature-modulo-homogeneity-4} that
\allowdisplaybreaks%
\begin{align}
	\tensor{\bm{A}}{_\beta^\alpha_0_\sigma}
	&=\tensor{\smash{\Pi^{(2)}}}{_\beta^\alpha_0_\sigma}
	-\tensor{\delta}{^\alpha_\sigma}\weylcomp{Z}_{\beta 0}, \\
	\bm{a}_{0\sigma}
	&={\smash{\Pi^{(2)}}}_{0\sigma}+\weylcomp{Z}_{\sigma 0}+\weylcomp{z}_\sigma, \\
	\bm{a}_{0\conj{\sigma}}
	&={\smash{\Pi^{(2)}}}_{0\conj{\sigma}}+\weylcomp{z}_{\conj{\sigma}}, \\
	\bm{Z}_{\beta\sigma\conj{\tau}}
	&={\smash{\Pi^{(2)}}}_{\beta\sigma\conj{\tau}}
	-ih_{\beta\conj{\tau}}\weylcomp{z}_\sigma
	+ih_{\sigma\conj{\tau}}\weylcomp{Z}_{\beta 0}, \\
	\bm{Z}_{\beta\sigma\tau}
	&={\smash{\Pi^{(2)}}}_{\beta\sigma\tau}, \\
	\bm{Z}_{\beta\conj{\sigma}\conj{\tau}}
	&={\smash{\Pi^{(2)}}}_{\beta\conj{\sigma}\conj{\tau}}
	+ih_{\beta\conj{\sigma}}\weylcomp{z}_{\conj{\tau}}
	-ih_{\beta\conj{\tau}}\weylcomp{z}_{\conj{\sigma}}.
\end{align}
\allowdisplaybreaks[0]%

The normality condition \eqref{eq:normality-of-Weyl-form-pulled-back} implies that
\begin{subequations}
\begin{gather}
	\label{eq:normal-weyl-form-homogeneity-three-equation-1}
	\tensor{\bm{A}}{_\beta^\gamma_0_\gamma}-\tensor{\bm{a}}{_0_\beta}+i\tensor{\bm{Z}}{_\beta_\gamma^\gamma}=0, \\
	\label{eq:normal-weyl-form-homogeneity-three-equation-2}
	-i\tensor{\bm{a}}{_0_\sigma}-i\tensor{\conj{\bm{a}}}{_0_\sigma}
	+\tensor{\bm{Z}}{_\gamma_\sigma^\gamma}-\tensor{\bm{Z}}{^\gamma_\sigma_\gamma}=0.
\end{gather}
\end{subequations}
It follows from \eqref{eq:normal-weyl-form-homogeneity-three-equation-1} that
\begin{subequations}
\begin{equation}
	\label{eq:normal-weyl-form-homogeneity-three-term-1}
	\begin{split}
		\weylcomp{Z}_{\beta 0}
		&=\frac{1}{2n+1}(\tensor{\smash{\Pi^{(2)}}}{_\beta^\gamma_0_\gamma}
		-\tensor{\smash{\Pi^{(2)}}}{_0_\beta}+i\tensor{\smash{\Pi^{(2)}}}{_\beta_\gamma^\gamma}) \\
		&=\frac{1}{2n+1}(
		-\tensor{W}{_\beta^\gamma_\gamma}
		+\tfrac{1}{n+2}\tensor{W}{_\gamma^\gamma_\beta}
		-i\nabla_\gamma\tensor{\weylcomp{A}}{_\beta^\gamma_0}
		-\tfrac{1}{n+2}\tensor{W}{_\gamma^\gamma_\beta}+\nabla_\beta \weylcomp{a}_0
		+i\nabla_\gamma\tensor{\weylcomp{Z}}{_\beta^\gamma}-i\nabla^\gamma\tensor{\weylcomp{Z}}{_\beta_\gamma})\\
		&=-\frac{i}{2n+1}(\nabla_\gamma\tensor{{P'}}{_\beta^\gamma}+\nabla_\gamma\tensor{{P''}}{_\beta^\gamma}
		-2i\nabla^\gamma\tensor{A}{_\beta_\gamma}
		+\tfrac{1}{n}\nabla^\gamma(\nabla^*N)^\sym_{\beta\gamma}
		+\tfrac{1}{n+2}\nabla^\gamma(\nabla^*N)^\skew_{\beta\gamma}).
	\end{split}
\end{equation}
Likewise, \eqref{eq:normal-weyl-form-homogeneity-three-equation-2} implies that
\begin{equation}
	\label{eq:normal-weyl-form-homogeneity-three-term-2}
	\begin{split}
		\weylcomp{z}_\sigma
		&=-\frac{i}{2n+1}(\tensor{\smash{\Pi^{(2)}}}{_\gamma_\sigma^\gamma}
		-\tensor{\smash{\Pi^{(2)}}}{^\gamma_\sigma_\gamma}) \\
		&=-\frac{i}{2n+1}(
		\nabla_\sigma\tensor{\weylcomp{Z}}{_\gamma^\gamma}
		-\nabla^\gamma\tensor{\weylcomp{Z}}{_\gamma_\sigma}
		-\nabla_\sigma\tensor{\weylcomp{Z}}{^\gamma_\gamma}
		+\nabla_\gamma\tensor{\weylcomp{Z}}{^\gamma_\sigma}
		-\tensor{N}{_{\gamma_1}_\sigma_{\gamma_2}}\tensor{\weylcomp{Z}}{^{\gamma_2}^{\gamma_1}})\\
		&=\frac{i}{2n+1}
		(\nabla_\gamma\tensor{{P''}}{_\sigma^\gamma}
		-i\nabla^\gamma\tensor{A}{_\sigma_\gamma}
		-i\tensor{A}{^{\gamma_1}^{\gamma_2}}\tensor{N}{_{\gamma_1}_{\gamma_2}_\sigma} \\
		&\qquad+\tfrac{1}{n}(\nabla^\gamma(\nabla^*N)^\sym_{\sigma\gamma}
		-(\nabla^*N)^\sym_{\conj{\gamma}_1\conj{\gamma}_2}
		\tensor{N}{^{\conj{\gamma}_1}^{\conj{\gamma}_2}_\sigma})
		-\tfrac{1}{n+2}(\nabla^\gamma(\nabla^*N)^\skew_{\sigma\gamma}
		-(\nabla^*N)^\skew_{\conj{\gamma}_1\conj{\gamma}_2}
		\tensor{N}{^{\conj{\gamma}_1}^{\conj{\gamma}_2}_\sigma})).
	\end{split}
\end{equation}
\end{subequations}

The formulae that we have just obtained may be simplified further by the Bianchi identities
of the Tanaka--Webster connection, but there is no point in doing it here in full generality.
In the integrable case (i.e., if $N=0$), we have (see Lee \cite{Lee-88}*{Lemma 2.2})
\begin{equation}
	\nabla_\gamma\tensor{P}{_\alpha^\gamma}
	=\frac{1}{n+2}((2n+1)\nabla_\alpha P-i(n-1)\nabla^\gamma A_{\alpha\gamma}).
\end{equation}
Therefore, \eqref{eq:normal-weyl-form-homogeneity-three-term-1} becomes
\begin{equation}
	\weylcomp{Z}_{\beta 0}
	=-\frac{2}{2n+1}(\nabla^\gamma A_{\beta\gamma}+i\nabla_\gamma\tensor{P}{_\beta^\gamma})
	=-\frac{2i}{n+2}(\nabla_\beta P-i\nabla^\gamma A_{\beta\gamma}),
\end{equation}
which equals $-2iT_\beta$ in the notation of Gover--Graham \cite{Gover-Graham-05},
and \eqref{eq:normal-weyl-form-homogeneity-three-term-2} becomes
\begin{equation}
	\weylcomp{z}_\sigma
	=\frac{i}{2n+1}(\nabla_\gamma\tensor{P}{_\sigma^\gamma}-i\nabla^\gamma A_{\sigma\gamma})
	=\frac{i}{n+2}(\nabla_\beta P-i\nabla^\gamma A_{\beta\gamma}),
\end{equation}
which is $iT_\beta$ of \cite{Gover-Graham-05}.

\subsubsection{Homogeneity $4$ component}

This is the last step of the determination of the normal Weyl form $\tau$.
We already know that $\underline{\tau}$ equals
\begin{equation}
	\eta^{(3)}
	=
	\begin{pmatrix}
		-\frac{1}{n+2}\tensor{\omega}{_\gamma^\gamma}+\weylcomp{a}_0\theta & \weylcomp{Z}_{\beta\sigma}\theta^\sigma+\weylcomp{Z}_{\beta\conj{\sigma}}\theta^{\conj{\sigma}}+\weylcomp{Z}_{\beta 0}\theta & i\weylcomp{z}_\sigma\theta^\sigma+i\weylcomp{z}_{\conj{\sigma}}\theta^{\conj{\sigma}} \\
		\theta^\alpha & \tensor{\omega}{_\beta^\alpha}-\frac{1}{n+2}\tensor{\omega}{_\gamma^\gamma}\tensor{\delta}{_\beta^\alpha}+\tensor{\weylcomp{A}}{_\beta^\alpha_0}\theta & -\tensor{\weylcomp{Z}}{^\alpha_\sigma}\theta^\sigma-\tensor{\weylcomp{Z}}{^\alpha_{\conj{\sigma}}}\theta^{\conj{\sigma}}-\tensor{\weylcomp{Z}}{^\alpha_0}\theta \\
		i\theta & -\theta_\beta & -\frac{1}{n+2}\tensor{\omega}{_\gamma^\gamma}-\conj{\weylcomp{a}}_0\theta
	\end{pmatrix}
\end{equation}
modulo terms of homogeneity $\ge 4$,
where the homogeneity $\ge 2$ components in the matrix entries are given by
\eqref{eq:normal-weyl-form-homogeneity-two-term-1}, \eqref{eq:normal-weyl-form-homogeneity-two-term-2},
\eqref{eq:normal-weyl-form-homogeneity-two-term-3},
and \eqref{eq:normal-weyl-form-homogeneity-three-term-1}, \eqref{eq:normal-weyl-form-homogeneity-three-term-2}.
We write
\begin{equation}
	\underline{\tau}=
	\eta^{(3)}+
	\underbrace{
		\begin{pmatrix}
			\phantom{M} & \phantom{M} & i\weylcomp{z}_0\theta \\
			& & \\
			& &
		\end{pmatrix}
	}_{\text{homogeneity $\ge 4$}}.
\end{equation}
Then the curvature is given by the following formula modulo terms of homogeneity $\ge 5$:
\begin{equation}
	\label{eq:curvature-modulo-homogeneity-5}
	\underline{K}
	\equiv d\eta^{(3)}+\frac{1}{2}[\eta^{(3)}\wedge\eta^{(3)}]
	+\begin{pmatrix}
			\phantom{M} & \phantom{M} & i\weylcomp{z}_0 \\
			& & \\
			& &
	\end{pmatrix} d\theta
	+\left[\eta^{(0)}\wedge
		\begin{pmatrix}
			\phantom{M} & \phantom{M} & i\weylcomp{z}_0\theta \\
			& & \\
			& &
		\end{pmatrix}
	\right].
\end{equation}
In determining $\weylcomp{z}_0$, only the homogeneity $4$ components of $\underline{K}$ are involved.
So if we set
\begin{equation}
	d\eta^{(3)}+\frac{1}{2}[\eta^{(3)}\wedge\eta^{(3)}]
	=
	\begin{pmatrix}
		* & {\Pi^{(3)}}_\beta & i\Pi^{(3)} \\
		* & * & -{\smash{\Pi^{(3)}}}^\alpha \\
		& * & *
	\end{pmatrix}
\end{equation}
and
\begin{equation}
	{\smash{\Pi^{(3)}}}_\beta
	=\tfrac{1}{2}{\smash{\Pi^{(3)}}}_{\beta KL}\theta^K\wedge\theta^L,\qquad
	\Pi^{(3)}
	=\tfrac{1}{2}{\smash{\Pi^{(3)}}}_{KL}\theta^K\wedge\theta^L,
\end{equation}
where the indices $K$ and $L$ run through $\set{0,1,\dots,n,\conj{1},\dots,\conj{n}}$,
then we only need to compute its homogeneity $4$ components, namely
${\smash{\Pi^{(3)}}}_{\beta 0\sigma}$,
${\smash{\Pi^{(3)}}}_{\beta 0\conj{\sigma}}$,
and ${\smash{\Pi^{(3)}}}_{\sigma\conj{\tau}}$,
${\smash{\Pi^{(3)}}}_{\sigma\tau}$,
${\smash{\Pi^{(3)}}}_{\conj{\sigma}\conj{\tau}}$.
Actually, we do not need all of them as we see as follows. If the curvature of $\underline{\tau}$ is expressed as
\begin{equation}
	\underline{K}
	=\begin{pmatrix}
		* & \bm{Z}_\beta & i\bm{z} \\
		* & * & -\bm{Z}^\alpha \\
		& * & *
	\end{pmatrix},
\end{equation}
then \eqref{eq:curvature-modulo-homogeneity-5} implies
\begin{align}
	\bm{Z}_{\beta 0\sigma}
	&={\smash{\Pi^{(3)}}}_{\beta 0\sigma}, \\
	\bm{Z}_{\beta 0\conj{\sigma}}
	&={\smash{\Pi^{(3)}}}_{\beta 0\conj{\sigma}}-ih_{\beta\conj{\sigma}}\weylcomp{z}_0, \\
	\bm{z}_{\sigma\conj{\tau}}
	&={\smash{\Pi^{(3)}}}_{\sigma\conj{\tau}}+ih_{\sigma\conj{\tau}}\weylcomp{z}_0, \\
	\bm{z}_{\sigma\tau}
	&={\smash{\Pi^{(3)}}}_{\sigma\tau}, \\
	\bm{z}_{\conj{\sigma}\conj{\tau}}
	&={\smash{\Pi^{(3)}}}_{\conj{\sigma}\conj{\tau}},
\end{align}
and the normality condition \eqref{eq:normality-of-Weyl-form-pulled-back} in this degree reads
\begin{equation}
	\tensor{\bm{Z}}{_\gamma_0^\gamma}-\tensor{\bm{Z}}{^\gamma_0_\gamma}-\tensor{\bm{z}}{_\gamma^\gamma}=0,
\end{equation}
which implies that
\begin{equation}
	\weylcomp{z}_0=\frac{i}{3n}(-\tensor{\smash{\Pi^{(3)}}}{_\gamma_0^\gamma}
	+\tensor{\smash{\Pi^{(3)}}}{^\gamma_0_\gamma}+\tensor{\smash{\Pi^{(3)}}}{_\gamma^\gamma}).
\end{equation}
Therefore, it suffices to compute $\tensor{\smash{\Pi^{(3)}}}{_\beta_0_{\conj{\sigma}}}$ and
$\tensor{\smash{\Pi^{(3)}}}{_\sigma_{\conj{\tau}}}$.
By a direct computation, we obtain
\begin{align}
	{\smash{\Pi^{(3)}}}_{\beta 0\conj{\sigma}}
	&=-\nabla_{\conj{\sigma}}\weylcomp{Z}_{\beta 0}
	+\nabla_0\weylcomp{Z}_{\beta\conj{\sigma}}
	+\tensor{A}{_{\conj{\sigma}}^\gamma}\weylcomp{Z}_{\beta\gamma}
	-\tensor{\weylcomp{A}}{_\beta^\gamma_0}\weylcomp{Z}_{\gamma\conj{\sigma}}
	+\weylcomp{a}_0\weylcomp{Z}_{\beta\conj{\sigma}},\\
	{\smash{\Pi^{(3)}}}_{\sigma\conj{\tau}}
	&=\nabla_\sigma \weylcomp{z}_{\conj{\tau}}-\nabla_{\conj{\tau}}\weylcomp{z}_\sigma
	-i\tensor{\weylcomp{Z}}{^\gamma_\sigma}\tensor{\weylcomp{Z}}{_\gamma_{\conj{\tau}}}
	+i\tensor{\weylcomp{Z}}{_\gamma_\sigma}\tensor{\weylcomp{Z}}{^\gamma_{\conj{\tau}}}.
\end{align}
Consequently, we get
\begin{equation}
	\label{eq:normal-weyl-form-homogeneity-four-term}
	\begin{multlined}
		\weylcomp{z}_0
		=\frac{i}{3n}
		(\nabla^\gamma\weylcomp{Z}_{\gamma 0}-\nabla^{\conj{\gamma}}\weylcomp{Z}_{\conj{\gamma}0}
		-A^{\gamma_1\gamma_2}\weylcomp{Z}_{\gamma_1\gamma_2}
		+A^{\conj{\gamma}_1\conj{\gamma}_2}\weylcomp{Z}_{\conj{\gamma}_1\conj{\gamma}_2} \\
		+\tensor{\weylcomp{A}}{_{\gamma_1}^{\gamma_2}_0}\tensor{\weylcomp{Z}}{_{\gamma_2}^{\gamma_1}}
		-\tensor{\weylcomp{A}}{_{\conj{\gamma}_1}^{\conj{\gamma}_2}_0}\tensor{\weylcomp{Z}}{_{\conj{\gamma}_2}^{\conj{\gamma}_1}}
		-2\weylcomp{a}_0\tensor{\weylcomp{Z}}{_\gamma^\gamma}
		-\nabla^\gamma \weylcomp{z}_\gamma+\nabla^{\conj{\gamma}}\weylcomp{z}_{\conj{\gamma}}
		-i\weylcomp{Z}_{\gamma_1\conj{\gamma}_2}\weylcomp{Z}^{\gamma_1\conj{\gamma}_2}
		+i\weylcomp{Z}_{\gamma_1\gamma_2}\weylcomp{Z}^{\gamma_1\gamma_2}).
	\end{multlined}
\end{equation}
The explicit formula in terms of the Tanaka--Webster invariants, which we omit,
can be obtained by
putting \eqref{eq:normal-weyl-form-homogeneity-two-term-1}, \eqref{eq:normal-weyl-form-homogeneity-two-term-2},
\eqref{eq:normal-weyl-form-homogeneity-two-term-3},
and \eqref{eq:normal-weyl-form-homogeneity-three-term-1}, \eqref{eq:normal-weyl-form-homogeneity-three-term-2}
into the above.
In the integrable case, it becomes
\begin{equation}
	\weylcomp{z}_0
	=\frac{1}{n}
	(\nabla^\gamma T_\gamma+\nabla^{\conj{\gamma}}T_{\conj{\gamma}}
	+P_{\alpha\conj{\beta}}P^{\alpha\conj{\beta}}-A_{\alpha\beta}A^{\alpha\beta}),
\end{equation}
which is $-S$ in the notation of \cite{Gover-Graham-05}.

\section{Computation of CR BGG operators}
\label{sec:computation-of-CR-BGG-operators}

Having written down the CR normal Weyl form explicitly in the previous section,
we are now able to express the normal tractor connection associated with
the standard representation of $G^\sharp=\SU(n+1,1)$
and the one associated with the adjoint representation of $G=\PSU(n+1,1)$
using Proposition \ref{prop:tractor-connection-in-terms-of-Weyl-form}.
Then we can also derive the formula of the modified adjoint tractor connection $\tilde{\nabla}$.
We are going to compute the first BGG operator associated with these three tractor connections.

\subsection{The case of the normal standard tractor connection}

Let $\sigma_\theta$ be the Weyl structure associated with a contact form $\theta$.
Then, the conclusion from the previous section is that the associated normal Weyl form $\tau$ is given by
\begin{equation}
	\underline{\tau}
	=
	\begin{pmatrix}
		-\frac{1}{n+2}\tensor{\omega}{_\gamma^\gamma}+\weylcomp{a}_0\theta &
		\weylcomp{Z}_{\beta\sigma}\theta^\sigma
		+\weylcomp{Z}_{\beta\conj{\sigma}}\theta^{\conj{\sigma}}
		+\weylcomp{Z}_{\beta 0}\theta &
		i\weylcomp{z}_\sigma\theta^\sigma+i\weylcomp{z}_{\conj{\sigma}}\theta^{\conj{\sigma}}+i\weylcomp{z}_0\theta \\
		\theta^\alpha &
		\tensor{\omega}{_\beta^\alpha}
		-\frac{1}{n+2}\tensor{\omega}{_\gamma^\gamma}\tensor{\delta}{_\beta^\alpha}
		+\tensor{\weylcomp{A}}{_\beta^\alpha_0}\theta &
		-\tensor{\weylcomp{Z}}{^\alpha_\sigma}\theta^\sigma
		-\tensor{\weylcomp{Z}}{^\alpha_{\conj{\sigma}}}\theta^{\conj{\sigma}}
		-\tensor{\weylcomp{Z}}{^\alpha_0}\theta \\
		i\theta & -\theta_\beta & -\frac{1}{n+2}\tensor{\omega}{_\gamma^\gamma}-\conj{\weylcomp{a}}_0\theta
	\end{pmatrix},
\end{equation}
where \eqref{eq:normal-weyl-form-homogeneity-two-term-1},
\eqref{eq:normal-weyl-form-homogeneity-two-term-2},
\eqref{eq:normal-weyl-form-homogeneity-two-term-3},
\eqref{eq:normal-weyl-form-homogeneity-three-term-1},
\eqref{eq:normal-weyl-form-homogeneity-three-term-2},
and \eqref{eq:normal-weyl-form-homogeneity-four-term} are observed.

Proposition \ref{prop:tractor-connection-in-terms-of-Weyl-form} implies that
the normal tractor connection $\nabla$ for the standard tractor bundle
$\mathcal{V}=\mathcal{G}^\sharp\times_{P^\sharp}\mathbb{V}\cong\mathcal{G}^\sharp_0\times_{G^\sharp_0}\mathbb{V}$,
the latter identification being given by the Weyl structure $\sigma_\theta$, can be expressed as
\begin{subequations}
\begin{align}
	\label{eq:normal-standard-tractor-connection-1}
	\nabla_\sigma
	\begin{pmatrix}
		s \\ t^\alpha \\ u
	\end{pmatrix}
	&=
	\begin{pmatrix}
		\nabla_\sigma s+\weylcomp{Z}_{\beta\sigma}t^\beta+i\weylcomp{z}_\sigma u \\
		\nabla_\sigma t^\alpha+\tensor{\delta}{_\sigma^\alpha}s-\tensor{\weylcomp{Z}}{^\alpha_\sigma}u \\
		\nabla_\sigma u
	\end{pmatrix},\\
	\label{eq:normal-standard-tractor-connection-2}
	\nabla_{\conj{\sigma}}
	\begin{pmatrix}
		s \\ t^\alpha \\ u
	\end{pmatrix}
	&=
	\begin{pmatrix}
		\nabla_{\conj{\sigma}}s+\weylcomp{Z}_{\beta\conj{\sigma}}t^\beta+i\weylcomp{z}_{\conj{\sigma}}u \\
		\nabla_{\conj{\sigma}}t^\alpha-\tensor{\weylcomp{Z}}{^\alpha_{\conj{\sigma}}}u \\
		\nabla_{\conj{\sigma}}u-t_{\conj{\sigma}}
	\end{pmatrix},\\
	\label{eq:normal-standard-tractor-connection-3}
	\nabla_0
	\begin{pmatrix}
		s \\ t^\alpha \\ u
	\end{pmatrix}
	&=
	\begin{pmatrix}
		\nabla_0s+\weylcomp{a}_0s+\weylcomp{Z}_{\beta 0}t^\beta+i\weylcomp{z}_0u \\
		\nabla_0t^\alpha+\tensor{\weylcomp{A}}{_\beta^\alpha_0}t^\beta-\tensor{\weylcomp{Z}}{^\alpha_0}u \\
		\nabla_0u-\conj{\weylcomp{a}}_0u+is
	\end{pmatrix}.
\end{align}
\end{subequations}
For later purpose, let us also write down the normal tractor connection associated with
the dual representation of $\mathbb{V}$ (the normal standard cotractor connection).
It is given by
\begin{subequations}
\begin{align}
	\label{eq:normal-standard-cotractor-connection-1}
	\nabla_\sigma \begin{pmatrix} \sigma & \tau_\alpha & \rho \end{pmatrix}
	&=
	\begin{pmatrix}
		\nabla_\sigma\sigma-\tau_\sigma &
		\nabla_\sigma\tau_\alpha-\weylcomp{Z}_{\alpha\sigma}\sigma &
		\nabla_\sigma\rho+\tensor{\weylcomp{Z}}{^\gamma_\sigma}\tau_\gamma-i\weylcomp{z}_\sigma\sigma
	\end{pmatrix},\\
	\label{eq:normal-standard-cotractor-connection-2}
	\nabla_{\conj{\sigma}} \begin{pmatrix} \sigma & \tau_\alpha & \rho \end{pmatrix}
	&=
	\begin{pmatrix}
		\nabla_{\conj{\sigma}}\sigma &
		\nabla_{\conj{\sigma}}\tau_\alpha+h_{\alpha\conj{\sigma}}\rho-\weylcomp{Z}_{\alpha\conj{\sigma}}\sigma &
		\nabla_{\conj{\sigma}}\rho+\tensor{\weylcomp{Z}}{^\gamma_{\conj{\sigma}}}\tau_\gamma-i\weylcomp{z}_{\conj{\sigma}}\sigma
	\end{pmatrix},\\
	\label{eq:normal-standard-cotractor-connection-3}
	\nabla_0 \begin{pmatrix} \sigma & \tau_\alpha & \rho \end{pmatrix}
	&=
	\begin{pmatrix}
		\nabla_0\sigma-\weylcomp{a}_0\sigma-i\rho &
		\nabla_0\tau_\alpha-\smash{\tensor{\weylcomp{A}}{_\alpha^\beta_0}}\tau_\beta-\weylcomp{Z}_{\alpha 0}\sigma &
		\nabla_0\rho+\conj{\weylcomp{a}}_0\rho+\smash{\tensor{\weylcomp{Z}}{^\beta_0}}\tau_\beta-i\weylcomp{z}_0\sigma
	\end{pmatrix},
\end{align}
\end{subequations}
which boils down to \cite{Gover-Graham-05}*{Equation (3.3)} in the integrable case.

In order to compute the associated first BGG operator $D_0$,
we need to identify the lifting operator
$L\colon \mathcal{H}_0(\mathfrak{p}_+,\mathbb{V})\to\mathcal{V}$ satisfying
that $\nabla Lu$ lies in the kernel of $\partial^*\colon \Omega^1(\mathcal{V})\to\mathcal{V}$
for every $u=\mathcal{H}_0(\mathfrak{p}_+,\mathbb{V})\cong\mathcal{E}(0,1)$.
If we write
\begin{equation}
	Lu=
	\begin{pmatrix}
		s \\ t^\alpha \\ u
	\end{pmatrix},
\end{equation}
then computations in Section \ref{subsec:homology-with-values-in-standard} shows that
$\partial^*\nabla Lu=0$ implies
\begin{equation}
	t^\alpha=\nabla^\alpha u,\qquad
	s=\frac{1}{n+1}(-\nabla_\gamma t^\gamma+i\nabla_0 u-i\conj{\weylcomp{a}}_0u+\tensor{\weylcomp{Z}}{^\gamma_\gamma}u).
\end{equation}
Therefore, $\nabla Lu$ projects down to the element
$(\nabla_\sigma u,\nabla_{(\conj{\alpha}}t_{\conj{\beta})}-\weylcomp{Z}_{(\conj{\alpha}\conj{\beta})}u)$ of
$\mathcal{H}_1(\mathfrak{p}_+,\mathbb{V})=\mathcal{E}_\sigma(0,1)\oplus\mathcal{E}_{(\conj{\alpha}\conj{\beta})}(0,1)$.
As a consequence, we can conclude that the first BGG operator $D_0$ is given by
\begin{equation}
	\label{eq:standard-first-BGG-operator}
	D_0u=
	(\nabla_\alpha u,
	\nabla_{(\conj{\alpha}}t_{\conj{\beta})}-\weylcomp{Z}_{(\conj{\alpha}\conj{\beta})}u)
	=\left(\nabla_\alpha u,
	\nabla_{(\conj{\alpha}}\nabla_{\conj{\beta})}u-iA_{\conj{\alpha}\conj{\beta}}u
	-\frac{1}{n}(\nabla^*N)^\sym_{\conj{\alpha}\conj{\beta}}u\right).
\end{equation}

In the integrable case, the above operator reduces to
\begin{equation}
	\label{eq:standard-first-BGG-operator-in-integrable-case}
	D_0\colon u\mapsto
	(\nabla_\alpha u,
	\nabla_{\conj{\alpha}}\nabla_{\conj{\beta}}u-iA_{\conj{\alpha}\conj{\beta}}u).
\end{equation}
This system of equations for $u\in\mathcal{E}(0,1)$ is essentially
discussed in \v{C}ap--Gover \cite{Cap-Gover-08}*{Section 4.14} in terms of the standard cotractor connection
(see also \cite{Cap-Gover-08}*{Section 4.2} carefully).
Their computation in the proof of \cite{Cap-Gover-08}*{Proposition 4.13}
shows that the standard tractor connection $\nabla$ is a prolongation of
the operator \eqref{eq:standard-first-BGG-operator-in-integrable-case}.

Therefore, it is a remarkable observation that if we relax the integrability condition,
then $D_0$ no longer prolongs to $\nabla$.
Formally, we can formulate this fact as follows.

\begin{prop}
	Let $\nabla$ be the standard tractor connection and
	$D_0\colon\mathcal{E}(0,1)\to\mathcal{E}_\sigma(0,1)\oplus\mathcal{E}_{(\conj{\alpha}\conj{\beta})}(0,1)$
	the associated first BGG operator
	\eqref{eq:standard-first-BGG-operator}.
	Assume that the Nijenhuis tensor $N$ is nonzero at a point $p\in M$,
	and there exists a nontrivial jet solution of the equation $D_0u=0$ at $p$.
	Then there are no linear differential operators
	$u\mapsto t^\alpha$ and $u\mapsto s$ in any neighborhood of $p$ such that
	\begin{equation}
		D_0u=0\qquad\text{at the level of jets at $p$}
	\end{equation}
	implies
	\begin{equation}
		\nabla
		\begin{pmatrix}
			s \\ t^\alpha \\ u
		\end{pmatrix}
		=0\qquad\text{at the level of jets at $p$}.
	\end{equation}
\end{prop}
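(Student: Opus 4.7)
I would argue by contradiction, reducing the question to the non-vanishing at $p$ of a specific $N$-dependent obstruction. Suppose such differential operators $t^\alpha$ and $s$ exist, and let $\mathcal{I}$ denote the differential ideal at $p$ generated by the two components of $D_0u$ in \eqref{eq:standard-first-BGG-operator}. The hypothesis yields $\nabla(s, t^\alpha, u)^T \equiv 0 \pmod{\mathcal{I}}$. The first step is to show that $(s, t^\alpha, u)^T$ must coincide modulo $\mathcal{I}$ with the canonical BGG splitting $Lu$ of Proposition \ref{prop:construction-of-BGG-splitting-operator}: the third component of $\nabla_{\conj\sigma}(s, t^\alpha, u)^T$, namely $\nabla_{\conj\sigma}u - t_{\conj\sigma}$, forces $t^\alpha \equiv \nabla^\alpha u$, and taking the trace in $\sigma,\alpha$ of the second component of $\nabla_\sigma(s, t^\alpha, u)^T$ then pins down $s$. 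Alternatively, one can invoke the uniqueness in Proposition \ref{prop:construction-of-BGG-splitting-operator} via $\partial^*\nabla Lu \equiv 0$.

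The next step is to compute $\nabla Lu$ modulo $\mathcal{I}$ using \eqref{eq:normal-standard-tractor-connection-1}--\eqref{eq:normal-standard-tractor-connection-3} together with the commutator formulas of Proposition \ref{prop:TW-commutation-on-densities}. Since the curvature of the normal Weyl form already carries $N$ at the lowest nontrivial homogeneity, see \eqref{eq:normal-Weyl-curvature-up-to-homogeneity-one}, the components of $\nabla Lu$ pick up $N$-dependent terms after reduction modulo $\mathcal{I}$. For instance, the trace-free part of the second component of $\nabla_\sigma Lu$ reduces, by \eqref{eq:Ricci-primed-identities}, to $\tfrac{1}{n+2}(N_{\lambda\mu\sigma}N^{\mu\lambda\alpha})_{\mathrm{tf}}\,u$, and other components yield further $N$-dependent expressions (involving $(\nabla^*N)^{\mathrm{skew}}$, etc.). Since $u$ has a nontrivial jet at $p$ by hypothesis and the coefficients of $u$ in such obstructions cannot be absorbed into $\mathcal{I}$ (whose generators each involve at least one derivative of $u$ apart from the pure $A$- and $(\nabla^*N)^{\mathrm{sym}}$-multiples of $u$ in equation (ii) of $D_0u$, whose tensor type is different), each of these $N$-invariants must vanish at $p$.

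The main obstacle is to conclude that this system of vanishing conditions at $p$ is incompatible with $N(p)\neq 0$. A single obstruction already suffices in low dimensions: for $n=2$, a short direct computation with the two independent Nijenhuis components shows that $(R-R'')_{\alpha\conj\beta} = N_{\lambda\mu\alpha}N^{\mu\lambda}{}_{\conj\beta}$ is non-scalar whenever $N(p)\neq 0$, so $((R-R'')_\sigma{}^\alpha)_{\mathrm{tf}}$ detects $N(p)$ directly. In higher dimensions, special algebraic configurations of $N$ (e.g., a totally antisymmetric $N$ in $n=3$) can make individual obstructions accidentally vanish, so one must pool obstructions. A clean way is to iterate by applying $d^\nabla$ to $\nabla Lu$: since $\nabla Lu \equiv 0 \pmod{\mathcal{I}}$ implies that $R^\nabla$ applied to $Lu$ is $\equiv 0 \pmod{\mathcal{I}}$ as well, and since the $(\conj\sigma,\conj\tau)$-component of $R^\nabla$ contains $\tensor{N}{^\alpha_{\conj\sigma}_{\conj\tau}}$ as its leading term by \eqref{eq:normal-Weyl-curvature-up-to-homogeneity-one}, one extracts an obstruction linear in $N(p)$ itself, contradicting $N(p)\neq 0$. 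Verifying this pooling argument rigorously is the delicate technical step of the proof.
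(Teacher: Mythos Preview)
Your overall strategy---argue by contradiction, pin down $t^\alpha$ and $s$ from the vanishing of certain components of $\nabla(s,t^\alpha,u)^T$, then extract an $N$-dependent obstruction from the remaining components---matches the paper's approach. However, you overcomplicate the extraction step and end up with an incomplete argument, whereas the paper closes the proof in one line.

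The key simplification you miss is this. After determining $t^\alpha=\nabla^\alpha u$ and $s=i\nabla_0 u+\tfrac{1}{n+2}Pu$ (the paper reads off $s$ from the bottom entry of the $\nabla_0$-equation rather than from the BGG trace formula, which is cleaner), the middle component of $\nabla_\sigma$ reads
\[
\nabla_\sigma\nabla^\alpha u+\tensor{\delta}{_\sigma^\alpha}s+\tensor{{P''}}{^\alpha_\sigma}u=0.
\]
Commuting derivatives via Proposition~\ref{prop:TW-commutation-on-densities} and using $\nabla_\sigma u\equiv 0$ leaves
\[
-\tfrac{1}{n+2}\tensor{R}{_\sigma^\alpha}u+\tfrac{1}{n+2}\tensor{\delta}{_\sigma^\alpha}Pu+\tensor{{P''}}{^\alpha_\sigma}u=0.
\]
Now take the \emph{trace}, not the trace-free part: using $R=2(n+1)P$ this collapses to $(P''-P)u=0$, and by \eqref{eq:relationship-of-various-P} this is exactly $-\tfrac{1}{4(n+1)}\lvert N\rvert^2 u=0$. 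Since $\lvert N\rvert^2=N_{\alpha\beta\gamma}N^{\alpha\beta\gamma}$ is a positive-definite quadratic form in $N$, it is nonzero at $p$ whenever $N(p)\neq 0$, in every dimension and for every algebraic configuration of $N$. This gives an immediate contradiction with the existence of a nontrivial jet $u$ at $p$.

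By taking the trace-free part instead, you land on $(N_{\lambda\mu\sigma}\tensor{N}{^\mu^\lambda^\alpha})_{\mathrm{tf}}\,u$, which, as you correctly observe, can vanish for special $N$ (your $n=3$ totally antisymmetric example is apt). You then propose to repair this by iterating with $d^\nabla$ and extracting a curvature term linear in $N$, but you yourself flag this as ``the delicate technical step'' and do not carry it out. That step is simply unnecessary: the trace already produces the sharp scalar invariant $\lvert N\rvert^2$, and the whole pooling discussion can be deleted.
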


\begin{proof}
	Suppose that there were such differential operators $u\mapsto t^\alpha$ and $u\mapsto s$.
	Then, if $D_0u=0$ is satisfied,
	then in view of \eqref{eq:normal-standard-tractor-connection-2} and \eqref{eq:normal-standard-tractor-connection-3},
	$t^\alpha$ and $s$ must be given by
	\begin{equation}
		t^\alpha=\nabla^\alpha u\qquad\text{and}\qquad
		s=i\nabla_0u-i\conj{\weylcomp{a}}_0u=i\nabla_0u+\frac{1}{n+2}Pu.
	\end{equation}
	Moreover, \eqref{eq:normal-standard-tractor-connection-1} implies that such $u$ should also satisfy
	$\nabla_\beta t^\alpha+\tensor{\delta}{_\beta^\alpha}s-\tensor{\weylcomp{Z}}{^\alpha_\beta}u=0$,
	or equivalently,
	\begin{equation}
		\nabla_\beta\nabla^\alpha u
		+\tensor{\delta}{_\beta^\alpha}\left(i\nabla_0u+\frac{1}{n+2}Pu\right)
		+\tensor{\smash{P''}}{^\alpha_\beta}u=0.
	\end{equation}
	Then it follows from Proposition \ref{prop:TW-commutation-on-densities} that
	\begin{equation}
		\nabla^\alpha\nabla_\beta u
		-\frac{1}{n+2}\tensor{R}{_\beta^\alpha}u
		+\frac{1}{n+2}\tensor{\delta}{_\beta^\alpha}Pu
		+\tensor{\smash{P''}}{^\alpha_\beta}u=0
	\end{equation}
	and hence, since $D_0u=0$ implies $\nabla_\beta u=0$,
	\begin{equation}
		-\frac{1}{n+2}\tensor{R}{_\beta^\alpha}u
		+\frac{1}{n+2}\tensor{\delta}{_\beta^\alpha}Pu
		+\tensor{\smash{P''}}{^\alpha_\beta}u=0.
	\end{equation}
	Taking the trace and $R=2(n+1)P$ yields $(P''-P)u=0$, which implies $\abs{N}^2u=0$ by
	\eqref{eq:relationship-of-various-P}.
	Note that our argument so far works as well at the level of jets.
	Therefore, there must be a nontrivial jet $u$ at $p$ satisfying $\abs{N}^2u=0$,
	but this contradicts our assumption that $N$ does not vanish at $p$.
\end{proof}

\subsection{The case of the normal adjoint tractor connection}
\label{subsec:BGG-opertor-for-normal-adjoint-tractor-connection}

Note that
\begin{equation}
	\mathfrak{g}=\Re\mathfrak{sl}(n+2,\mathbb{C})=\Re\tf\End(\mathbb{V})
\end{equation}
as a $G^\sharp$-module. Consequently, the adjoint tractor bundle
$\mathcal{A}M=\mathcal{G}\times_P\mathfrak{g}=\mathcal{G}^\sharp\times_{P^\sharp}\mathfrak{g}$ can be seen as
\begin{equation}
	\mathcal{A}M=\Re\tf\End(\mathcal{V}),
\end{equation}
and hence it follows from the results in the previous subsection
that the normal tractor connection of $\mathcal{A}M$ reads as
in Figure \ref{fig:normal-adjoint-tractor-connection}.

\setlength{\rotFPtop}{0pt plus 1fil}
\setlength{\rotFPbot}{0pt plus 1fil}
\begin{sidewaysfigure}
	\footnotesize
	\centering
	\begin{align}
		\nabla_\sigma
		&\begin{pmatrix}
			a & Z_\beta & iz \\
			X^\alpha & \tensor{A}{_\beta^\alpha} & -Z^\alpha \\
			ix & -X_\beta & -\conj{a}
		\end{pmatrix}
		=
		\begin{pmatrix}
			\nabla_\sigma a-Z_\sigma+\weylcomp{Z}_{\gamma\sigma}X^\gamma-\weylcomp{z}_\sigma x &
			\nabla_\sigma Z_\beta+\weylcomp{Z}_{\gamma\sigma}\tensor{A}{_\beta^\gamma}-\weylcomp{Z}_{\beta\sigma}a-i\weylcomp{z}_\sigma X_\beta &
			i\nabla_\sigma z+\tensor{\weylcomp{Z}}{^\gamma_\sigma}Z_\gamma-\weylcomp{Z}_{\gamma\sigma}Z^\gamma-2i\weylcomp{z}_\sigma\Re a \\
			\nabla_\sigma X^\alpha-\tensor{A}{_\sigma^\alpha}+\tensor{\delta}{^\alpha_\sigma}a-i\tensor{\weylcomp{Z}}{^\alpha_\sigma}x &
			\nabla_\sigma\tensor{A}{_\beta^\alpha}+\tensor{\delta}{_\sigma^\alpha}Z_\beta+\tensor{\weylcomp{Z}}{^\alpha_\sigma}X_\beta-\weylcomp{Z}_{\beta\sigma}X^\alpha &
			-\nabla_\sigma Z^\alpha+i\tensor{\delta}{_\sigma^\alpha}z+\tensor{\weylcomp{Z}}{^\gamma_\sigma}\tensor{A}{_\gamma^\alpha}+\tensor{\weylcomp{Z}}{^\alpha_\sigma}\conj{a}-i\weylcomp{z}_\sigma X^\alpha \\
			i\nabla_\sigma x+X_\sigma &
			-\nabla_\sigma X_\beta-i\weylcomp{Z}_{\beta\sigma}x &
			-\nabla_\sigma\conj{a}-\tensor{\weylcomp{Z}}{^\gamma_\sigma}X_\gamma+\weylcomp{z}_\sigma x
		\end{pmatrix}\\
		\nabla_{\conj{\sigma}}
		&\begin{pmatrix}
			a & Z_\beta & iz \\
			X^\alpha & \tensor{A}{_\beta^\alpha} & -Z^\alpha \\
			ix & -X_\beta & -\conj{a}
		\end{pmatrix}
		=
		\begin{pmatrix}
			\nabla_{\conj{\sigma}}a+\weylcomp{Z}_{\gamma\conj{\sigma}}X^\gamma-\weylcomp{z}_{\conj{\sigma}}x &
			\nabla_{\conj{\sigma}}Z_\beta+ih_{\beta\conj{\sigma}}z+\weylcomp{Z}_{\gamma\conj{\sigma}}\tensor{A}{_\beta^\gamma}-\weylcomp{Z}_{\beta\conj{\sigma}}a-i\weylcomp{z}_{\conj{\sigma}}X_\beta &
			i\nabla_{\conj{\sigma}}z-\tensor{\weylcomp{Z}}{_\gamma_{\conj{\sigma}}}Z^\gamma+\tensor{\weylcomp{Z}}{^\gamma_{\conj{\sigma}}}Z_\gamma-2i\weylcomp{z}_{\conj{\sigma}}\Re a \\
			\nabla_{\conj{\sigma}}X^\alpha-i\tensor{\weylcomp{Z}}{^\alpha_{\conj{\sigma}}}x &
			\nabla_{\conj{\sigma}}\tensor{A}{_\beta^\alpha}-h_{\beta\conj{\sigma}}Z^\alpha-\weylcomp{Z}_{\beta\conj{\sigma}}X^\alpha+\tensor{\weylcomp{Z}}{^\alpha_{\conj{\sigma}}}X_\beta &
			-\nabla_{\conj{\sigma}}Z^\alpha+\tensor{\weylcomp{Z}}{^\gamma_{\conj{\sigma}}}\tensor{A}{_\gamma^\alpha}+\tensor{\weylcomp{Z}}{^\alpha_{\conj{\sigma}}}\conj{a}-i\weylcomp{z}_{\conj{\sigma}}X^\alpha \\
			i\nabla_{\conj{\sigma}}x-X_{\conj{\sigma}} &
			-\nabla_{\conj{\sigma}}X_\beta-A_{\beta\conj{\sigma}}-h_{\beta\conj{\sigma}}\conj{a}-i\weylcomp{Z}_{\beta\conj{\sigma}}x &
			-\nabla_{\conj{\sigma}}\conj{a}+Z_{\conj{\sigma}}-\tensor{\weylcomp{Z}}{^\gamma_{\conj{\sigma}}}X_\gamma+\weylcomp{z}_{\conj{\sigma}}x
		\end{pmatrix}\\
		\nabla_0
		&\begin{pmatrix}
			a & Z_\beta & iz \\
			X^\alpha & \tensor{A}{_\beta^\alpha} & -Z^\alpha \\
			ix & -X_\beta & -\conj{a}
		\end{pmatrix} \\
		&=
		\begin{pmatrix}
			\nabla_0a+z+\weylcomp{Z}_{\gamma 0}X^\gamma-\weylcomp{z}_0x &
			\nabla_0Z_\beta-\tensor{\weylcomp{A}}{_\beta^\gamma_0}Z_\gamma+\weylcomp{a}_0Z_\beta+\weylcomp{Z}_{\gamma 0}\tensor{A}{_\beta^\gamma}-\weylcomp{Z}_{\beta 0}a-i\weylcomp{z}_0X_\beta &
			i\nabla_0z+\tensor{\weylcomp{Z}}{^\gamma_0}Z_\gamma-\weylcomp{Z}_{\gamma 0}Z^\gamma-2i\weylcomp{z}_0\Re a \\
			\nabla_0X^\alpha+iZ^\alpha+\tensor{\weylcomp{A}}{_\gamma^\alpha_0}X^\gamma-\weylcomp{a}_0X^\alpha-i\tensor{\weylcomp{Z}}{^\alpha_0}x &
			\nabla_0\tensor{A}{_\beta^\alpha}+\tensor{\weylcomp{A}}{_\gamma^\alpha_0}\tensor{A}{_\beta^\gamma}-\tensor{\weylcomp{A}}{_\beta^\gamma_0}\tensor{A}{_\gamma^\alpha}-\weylcomp{Z}_{\beta 0}X^\alpha+\tensor{\weylcomp{Z}}{^\alpha_0}X_\beta &
			-\nabla_0Z^\alpha-\tensor{\weylcomp{A}}{_\gamma^\alpha_0}Z^\gamma+\weylcomp{a}_0Z^\alpha+\tensor{\weylcomp{Z}}{^\gamma_0}\tensor{A}{_\gamma^\alpha}+\tensor{\weylcomp{Z}}{^\alpha_0}\conj{a}-i\weylcomp{z}_0X^\alpha \\
			i\nabla_0x+2i\Re a &
			-\nabla_0X_\beta+iZ_\beta+\tensor{\weylcomp{A}}{_\beta^\gamma_0}X_\gamma-\weylcomp{a}_0X_\beta-i\weylcomp{Z}_{\beta 0}x &
			-\nabla_0\conj{a}-z-\tensor{\weylcomp{Z}}{^\gamma_0}X_\gamma+\weylcomp{z}_0x
		\end{pmatrix}
	\end{align}
	\caption{The normal adjoint tractor connection}
	\label{fig:normal-adjoint-tractor-connection}
\end{sidewaysfigure}

The lifting operator $L$ acting on $\mathcal{H}_0(\mathfrak{p}_+,\mathfrak{g})\cong\Re\mathcal{E}(1,1)$
is defined so that $\nabla Lx$ lies in the kernel of
$\partial^*\colon\Omega^1(\mathcal{A}M)\to\mathcal{A}M$ for every $x\in\Re\mathcal{E}(1,1)$.
This implies that $Lx\in\mathcal{A}M$ is of the form
\begin{equation}
	Lx=
	\begin{pmatrix}
		* & * & * \\
		i\nabla^\alpha x & * & * \\
		ix & i\nabla_\beta x & *
	\end{pmatrix}.
\end{equation}
Then
\begin{equation}
	\begin{split}
		\nabla_\sigma Lx
		&=
		\begin{pmatrix}
			* & * & * \\
			* & * & * \\
			0 & i\nabla_\sigma\nabla_\beta x-i\weylcomp{Z}_{\beta\sigma}x & *
		\end{pmatrix} \\
		&=
		\begin{pmatrix}
			* & * & * \\
			* & * & * \\
			0 &
			i\nabla_\sigma\nabla_\beta x-A_{\beta\sigma}x
			-\tfrac{i}{n}(\nabla^*N)^\sym_{\beta\sigma}
			-\tfrac{i}{n+2}(\nabla^*N)^\skew_{\beta\sigma} &
			*
		\end{pmatrix}.
	\end{split}
\end{equation}
This implies that the first BGG operator is given by
\begin{equation}
	x\mapsto
	\left(-i\nabla_{(\alpha}\nabla_{\beta)}x
	+A_{\alpha\beta}x
	+\frac{i}{n}(\nabla^*N)^\sym_{\alpha\beta},
	i\nabla_{(\conj{\alpha}}\nabla_{\conj{\beta})}x
	+A_{\conj{\alpha}\conj{\beta}}x
	-\frac{i}{n}(\nabla^*N)^\sym_{\conj{\alpha}\conj{\beta}}\right).
\end{equation}

\subsection{The case of the modified adjoint tractor connection}

Recall from \eqref{eq:modified-tractor-connection} that, in order to compute the action
\begin{equation}
	\tilde{\nabla}
	\begin{pmatrix}
		a & Z_\beta & iz \\
		X^\alpha & \tensor{A}{_\beta^\alpha} & -Z^\alpha \\
		ix & -X_\beta & -\conj{a}
	\end{pmatrix}
\end{equation}
of the modified adjoint tractor connection $\tilde{\nabla}$,
we need to know the interior product of the curvature function $\underline{\kappa}$ and the vector field given by
\begin{equation}
	\begin{pmatrix}
		* & * & * \\
		X^\alpha & * & * \\
		ix & -X_\beta & *
	\end{pmatrix},
\end{equation}
which is $X^\alpha Z_\alpha+X^{\conj{\alpha}}Z_{\conj{\alpha}}+xT$.
The computation in the previous subsection implies that only
the $\mathfrak{g}_{-2}$- and the $\mathfrak{g}_{-1}$-valued components
are relevant to the first BGG operator associated with $\tilde{\nabla}$.

We know from \eqref{eq:normal-Weyl-curvature-up-to-homogeneity-one} that
the curvature of the normal Weyl form is of the form
\begin{equation}
	\underline{K}=
	\begin{pmatrix}
		* & * & * \\
		\smash{\Pi^{(2)}}^\alpha & * & * \\
		0 & -\smash{\Pi^{(2)}}_\beta & *
	\end{pmatrix}
\end{equation}
where
\begin{equation}
	\smash{\Pi^{(2)}}^\alpha
	=\tfrac{1}{2}\tensor{N}{^\alpha_{\conj{\sigma}}_{\conj{\tau}}}\theta^{\conj{\sigma}}\wedge\theta^{\conj{\tau}}
	+\tensor{\smash{\Pi^{(2)}}}{^\alpha_0_\sigma}\theta\wedge\theta^\sigma
	+\tensor{\smash{\Pi^{(2)}}}{^\alpha_0_{\conj{\sigma}}}\theta\wedge\theta^{\conj{\sigma}}.
\end{equation}
The curvature components $\tensor{\smash{\Pi^{(2)}}}{^\alpha_0_\sigma}$ and
$\tensor{\smash{\Pi^{(2)}}}{^\alpha_0_{\conj{\sigma}}}$ can be computed
using \eqref{eq:curvature-modulo-homogeneity-3}, and it follows that
\begin{equation}
	\iota_{X^\alpha Z_\alpha+X^{\conj{\alpha}}Z_{\conj{\alpha}}+xT}\smash{\Pi^{(2)}}^\alpha
	\equiv -\tensor{N}{^\alpha_{\conj{\sigma}}_{\conj{\tau}}}X^{\conj{\tau}}\theta^{\conj{\sigma}}
	+(\tensor{\weylcomp{A}}{_\sigma^\alpha_0}-\tensor{\delta}{_\sigma^\alpha}\weylcomp{a}_0
	+i\tensor{\weylcomp{Z}}{^\alpha_\sigma})x\theta^\sigma
	+(\tensor{A}{^\alpha_{\conj{\sigma}}}+i\tensor{\weylcomp{Z}}{^\alpha_{\conj{\sigma}}})x\theta^{\conj{\sigma}}
	\mod\theta.
\end{equation}
Consequently, the lifting operator $L$ remains the same as in the previous subsection
and the first BGG operator is given by
\begin{equation}
	\begin{split}
		D_0^{\tilde{\nabla}}x
		&=D_0^\nabla x+\proj(\iota_{\Pi(Lx)}\underline{K}) \\
		&=D_0^\nabla x
		+(-iN^\sym_{\alpha\beta\gamma}\nabla^\gamma x
		+A_{\alpha\beta}x-i\weylcomp{Z}_{(\alpha\beta)}x,
		iN^\sym_{\conj{\alpha}\conj{\beta}\conj{\gamma}}\nabla^{\conj{\gamma}}x
		+A_{\conj{\alpha}\conj{\beta}}x+i\weylcomp{Z}_{(\conj{\alpha}\conj{\beta})}x) \\
		&=
		\left(-i\nabla_{(\alpha}\nabla_{\beta)}x
		+A_{\alpha\beta}x
		-iN^\sym_{\alpha\beta\gamma}\nabla^\gamma x,
		i\nabla_{(\conj{\alpha}}\nabla_{\conj{\beta})}x
		+A_{\conj{\alpha}\conj{\beta}}x
		+iN^\sym_{\conj{\alpha}\conj{\beta}\conj{\gamma}}\nabla^{\conj{\gamma}}x\right).
	\end{split}
\end{equation}
This shows that $D_0^{\tilde{\nabla}}$ is nothing but the CR Killing operator.

\bibliography{myrefs}

\end{document}